\definecolor{labelkey}{gray}{.9}
\let\subset\subseteq
\def\subsc#1{\textsc{\MakeTextLowercase{#1}}} % needs package textcase
\def\sublem#1{\subsc{L\tiny{\ref{#1}}}}
\setlist{itemsep=1pt,parsep=0pt,topsep=2pt,partopsep=0pt}  
\def\itm#1{\rm ({#1})} 
\def\itmit#1{\itm{\it #1\,}} 
\def\rom{\itmit{\roman{*}}} 
\def\abc{\itmit{\alph{*}}} 
\def\endofFact{\hfill\scalebox{.6}{$\Box$}}
\newcommand{\By}[2]{\overset{\mbox{\tiny{#1}}}{#2}} 
\newcommand{\ByRef}[2]{   \By{\eqref{#1}}{#2} }
\newcommand{\eqByRef}[1]{ \ByRef{#1}{=} } 
\newcommand{\gByRef}[1]{  \ByRef{#1}{>} } 
\newcommand{\leByRef}[1]{ \ByRef{#1}{\le} } 
\newcommand{\geByRef}[1]{ \ByRef{#1}{\ge} } 
\newcommand{\greaterByRef}[1]{ \ByRef{#1}{>} } 
\newcommand{\FF}{\mathbb{F}}
\newcommand{\bigO}{\mathcal{O}}
\newcommand{\eps}{\varepsilon}
\let\epsilon\varepsilon
\let\ln\log
\newcommand{\EK}{\tilde{K}}
\newcommand{\Good}{g}
\newcommand{\tpl}[1]{\boldsymbol{#1}}
\newcommand{\vecu}{\tpl{u}}
\newcommand{\vecv}{\tpl{v}}
\newcommand{\vecx}{\tpl{x}}
\newcommand{\vecy}{\tpl{y}}
\newcommand{\vecc}{\tpl{c}}
\newtheorem{theorem}{Theorem}
\newtheorem{lemma}[theorem]{Lemma}
\newtheorem{claim}[theorem]{Claim}
\newtheorem{proposition}[theorem]{Proposition}
\newtheorem{corollary}[theorem]{Corollary}
\newtheorem{conjecture}[theorem]{Conjecture}
\newtheorem{question}[theorem]{Question}
\theoremstyle{definition}
\newtheorem{definition}[theorem]{Definition}
\theoremstyle{remark}
\newtheorem{remark}[theorem]{Remark}
\newcommand{\oldqed}{}
\newenvironment{claimproof}[1][Proof]{
  \renewcommand{\oldqed}{\qedsymbol}
  \renewcommand{\qedsymbol}{\endofFact}
  \begin{proof}[#1]
}{
  \end{proof}
  \renewcommand{\qedsymbol}{\oldqed}
} 
\title{Powers of Hamilton cycles in pseudorandom graphs}
  \author[P. Allen]{Peter Allen}
  \author[J. B\"ottcher]{Julia B\"ottcher}
  \address{%
    Department of Mathematics, London School of Economics, Houghton Street,
    London WC2A 2AE, U.K.
  }
  \email{p.d.allen@lse.ac.uk, j.boettcher@lse.ac.uk}
  \author[H. H\`an]{Hi\d{\^{e}}p H\`an}
  \author[Y. Kohayakawa]{Yoshiharu Kohayakawa}
  \address{%
   Instituto de Matem\'atica e Estat\'{\i}stica, Universidade de
    S\~ao Paulo, Rua do Mat\~ao 1010, 05508--090~S\~ao Paulo, Brazil
  }
  \email{yoshi@ime.usp.br, hh@ime.usp.br}
  \author[Y. Person]{Yury Person}
  \address{%
    Goethe-Universit\"at, Institute of Mathematics,
    Robert-Mayer-Str. 10, 60325 Frankfurt, Germany 
  }
  \email{person@math.uni-frankfurt.de}
  \thanks{%
    PA was partially supported by FAPESP (Proc.~2010/09555-7),
    JB by FAPESP (Proc.~2009/17831-7),
    HH by FAPESP (Proc.~2010/16526-3).
    YK was partially supported by CNPq (308509/2007-2, 477203/2012-4),
    FAPESP (2013/03447-6, 2013/07699-0) and the NSF (DMS-1102086).
    The cooperation of the authors was
    supported by a joint CAPES/DAAD project (415/ppp-probral/po/D08/11629,
    Proj.~no.~333/09).
    The authors are grateful to NUMEC/USP, N\'ucleo de Modelagem Estoc\'astica e
    Complexidade of the University of S\~ao Paulo, and Project MaCLinC/USP, for
    supporting this research.
  }
\date{\today}
\begin{document}
\begin{abstract}
  We study the appearance of powers of Hamilton cycles in pseudorandom
  graphs, using the following comparatively weak pseudorandomness notion.
  A graph~$G$ is $(\eps,p,k,\ell)$-pseudorandom if for all disjoint
  $X$ and~$Y\subset V(G)$ with $|X|\ge\eps p^kn$ and $|Y|\ge\eps p^\ell n$ we have
  $e(X,Y)=(1\pm\epsilon)p|X||Y|$.  We prove that for all $\beta>0$
  there is an $\eps>0$ such that an 
  $(\eps,p,1,2)$-pseudorandom graph on $n$ vertices with minimum degree at
  least $\beta pn$ contains the square of a Hamilton cycle.  In particular,
  this implies that $(n,d,\lambda)$-graphs with $\lambda\ll d^{5/2 }n^{-3/2}$
  contain the square of a Hamilton cycle, and thus a triangle factor if~$n$
  is a multiple of~$3$. This improves on a result of Krivelevich, Sudakov
  and Szab\'o [\emph{Triangle factors in sparse pseudo-random graphs},
  Combinatorica \textbf{24} (2004), no.~3, 403--426].

  We also extend our result to higher powers of Hamilton cycles and
  establish corresponding counting versions.
\end{abstract}

%\onehalfspace
\maketitle

\section{Introduction and results}

The appearance of certain graphs~$H$ as subgraphs is a dominant topic
in the study of random graphs. In the random graph model~$G(n,p)$ this
question turned out to be comparatively easy for graphs~$H$ of
constant size, but much harder for graphs~$H$ on~$n$ vertices, i.e.,
\textit{spanning} subgraphs. Early results were however obtained in
the case when~$H$ is a Hamilton cycle, for which this question is by
now very well understood~\cite{Boll,KomSzem,Kor76,Kor77,Posa76}.

When we turn to other spanning subgraphs~$H$ rather little was known
for a long time, until a remarkably general result by
Riordan~\cite{Riordan} established good estimates for a big variety of
spanning graphs~$H$.  In particular his result determines the
threshold for the appearance of a spanning hypercube, and the
threshold for the appearance of a spanning square lattice, as well as
of the $k$th-power of a Hamilton cycle for $k>2$. Here the \emph{$k$th
  power} of $H$ is obtained from~$H$ by adding all edges between
distinct vertices of distance at most~$k$ in~$H$.  For the square of a
Hamilton cycle the corresponding approximate threshold was only obtained recently by
K\"uhn and Osthus~\cite{KOPosa}.

Observe that the $k$th power of a Hamilton cycle contains $\lfloor
n/(k+1)\rfloor$ vertex disjoint copies of $K_{k+1}$, a so-called
\emph{$K_{k+1}$-factor}. It came as another breakthrough in the
area and solved a long-standing problem when Johansson, Kahn and
Vu~\cite{JKV08} established the threshold for $K_{k+1}$-factors in
$G(n,p)$ (or more generally of certain $F$-factors).

\subsection{Pseudorandom graphs}

Thomason~\cite{Tho87} asked whether it is possible to single out some
properties enjoyed by~$G(n,p)$ with high probability that
deterministically imply a rich collection of structural results that
hold for~$G(n,p)$.  He thus initiated the study of pseudorandom graphs
and suggested a deterministic property similar to the following notion
of jumbledness.  An $n$-vertex graph $G$ is \emph{$(p,\beta)$-jumbled}
if
\begin{equation}\label{eq:mixing}
  \big|e(A,B)-p|A||B|\big|\le \beta \sqrt{|A||B|}
\end{equation}
for all disjoint $A,B\subset V(G)$, where~$e(A,B)$ is the number of
edges in~$G$ with one endvertex in~$A$ and the other endvertex in~$B$.
The random graph $G(n,p)$ is with high probability $(p,\beta)$-jumbled
with $\beta=O(\sqrt{pn})$, so this definition is justified.  Moreover,
this pseudorandomness notion indeed implies a rich structure (see,
e.g., \cite{ChuGraWil,ChuGra,ConFoxZha,Tho87}). However, for spanning
subgraphs of general jumbled graphs (with a suitable minimum degree
condition) not much is known.

One special class of jumbled graphs, which has been studied extensively, is
the class of $(n,d,\lambda)$-graphs. Its definition relies on spectral properties.
% goes back to Alon and Milman~\ref{...}.
For a graph~$G$ with eigenvalues $\lambda_1\geq\lambda_2\geq\dots\geq
\lambda_n$ of the adjacency matrix of~$G$, we call
$\lambda(G):=\max\{|\lambda_2|,|\lambda_n|\}$ the \emph{second eigenvalue}
of~$G$.  An \emph{$(n,d,\lambda)$-graph} is a $d$-regular graph on $n$
vertices with $\lambda(G)\leq \lambda$.  The connection between
$(n,d,\lambda)$-graphs and jumbled graphs is established by the well-known
\emph{expander mixing lemma} (see, e.g., \cite{AS00}), which states that if
$G$ is an $(n,d,\lambda)$-graph, then
\begin{equation}\label{eq:nd_lambda}
  \left|e(A,B)-\tfrac{d}{n}|A||B|\right|\le \lambda(G) \sqrt{|A||B|}
\end{equation}
for all disjoint subsets $A,B\subset V(G)$. Hence~$G$ is $\big(\frac
dn,\lambda(G)\big)$-jumbled. 

One main advantage of $(n,d,\lambda)$-graphs are the powerful tools from
spectral graph theory which can be used for their study. Thanks to these
tools various results concerning spanning subgraphs of
$(n,d,\lambda)$-graphs~$G$ have been obtained. It turns out that already an
almost trivial eigenvalue gap guarantees a spanning matching: if
$\lambda\le d-2$ and~$n$ is even, then~$G$ has a perfect
matching~\cite{KriSudSurvey}. Moreover, if $\lambda\le d(\log\log 
n)^2/ (1000\log n \log\log\log n)$ then~$G$ has a Hamilton
cycle~\cite{KriSudHam}. The only other embedding result for spanning
subgraphs of $(n,d,\lambda)$-graphs that we are aware of concerns triangle
factors.  Krivelevich, Sudakov and Szab\'o~\cite{KSS04} proved that an
$(n,d,\lambda)$-graph~$G$ with $3\mid n$ and $\lambda=o\big(d^3/n^2\log n\big)$
contains a triangle factor.

It is instructive to compare this last result with corresponding lower bound
constructions.  Krivelevich, Sudakov and Szab\'o also remarked that by
using a blow-up of a construction of Alon~\cite{Alon94} one can obtain for
each $d'=d'(n')$ with $\Omega\big((n')^{2/3}\big)=d'\le n'$ an
$(n,d,\lambda)$-graph with $n=\Theta(n')$, $d=\Theta(d')$ and
$\lambda=\Theta(d^2/n)$ which is triangle-free and thus contains no
triangle factor. They conjectured that in fact
$(n,d,\lambda)$-graphs are so symmetric that the upper bound on~$\lambda$
they proved for triangle factors can be
improved, possibly all the way down to this lower bound. In this paper we bring
the upper bound closer to the conjectured lower bound and establish more
generally an embedding result for $k$th powers of Hamilton cycles (see
Corollary~\ref{cor:ndlambda}).

\subsection{Our results}

The pseudorandomness notion we shall work with in this paper is weaker than
that of $(n,d,\lambda)$-graphs, and in fact even weaker than jumbledness.

\begin{definition}
  Suppose~$\eps>0$ and~$0<p<1$.  Let $k$ and~$\ell$ with $k\le \ell $
  be positive integers.  We call an $n$-vertex graph~$G$
  $(\eps,p,k,\ell)$-\emph{pseudorandom} if
  \begin{equation}\label{eq:pseudodisc}
    \bigl|e(X,Y)-p|X||Y|\bigr|< \eps p |X||Y|
  \end{equation}
  for any disjoint subsets $X$, $Y\subseteq V(G)$ with
  $|X|\ge\eps p^{k}n$ and $|Y|\ge\eps p^{\ell}n$.
\end{definition}

It is easy to check that a graph which is
$\big(p,\eps^2p^sn\big)$-jumbled is
$(\eps,p,k,\ell)$-pseu\-do\-ran\-dom for all $k$ and~$\ell$ with
$k+\ell=2s-2$, but the jumbledness condition imposes tighter control
on the edge density between (for example) linear sized subsets. An
easy application of Chernoff's inequality and the union bound show
that $G(n,p)$ is $(\eps,p,k,\ell)$-pseudorandom with high probability
if $p\gg (n^{-1}\log n)^{1/(\max\{k,\ell\}+1)}$, while $G(n,p)$ only
gets $\big(p,\eps^2p^{(k+\ell+2)/2}n\big)$-jumbled if $p\gg 
n^{-1/(k+\ell+1)}$.  Our major motivation for using this weaker
pseudorandomness condition is that it is all we require.

Our main result states that sufficiently pseudorandom graphs which also
satisfy a mild minimum degree condition contain spanning powers of Hamilton
cycles.

\begin{theorem}\label{thm:main}
  For every $k\ge 2$ and $\beta>0$ there is an $\eps>0$ such that for any
  $p=p(n)$ with $0<p<1$ the following holds. Let~$G$ be a graph on~$n$
  vertices with minimum degree $\delta(G)\geq \beta pn$. 
  \begin{enumerate}[label=\abc]
    \item If~$G$ is $(\eps,p,1,2)$-pseudorandom
      then~$G$ contains a square of a Hamilton cycle.
    \item If~$G$ is $(\eps,p,k-1,2k-1)$-pseudorandom and $(\eps,p,k,k+1)$-pseudorandom
      then~$G$ contains a $k$th power of a Hamilton cycle.
  \end{enumerate}
\end{theorem}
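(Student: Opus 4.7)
The plan is to adapt the absorbing method of R\"odl, Ruci\'nski and Szemer\'edi to the pseudorandom setting. I would focus on part~(b) with general $k\ge 2$; part~(a) should follow from a simpler variant that exploits the fact that, for $k=2$, all relevant common neighbourhoods involve at most one vertex on one side, so only the $(\eps,p,1,2)$-threshold is needed.

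The first ingredient is a counting/extension lemma derived from~\eqref{eq:pseudodisc}: for almost every ordered tuple $(v_1,\dots,v_j)$ with $j\le k$, the common neighbourhood $N(v_1)\cap\dots\cap N(v_j)$ has size $(1\pm o(1))p^j n$ and meets every prescribed set $Y$ of size $\ge\eps p^{j+1}n$ in $(1\pm o(1))p^j|Y|$ vertices. The $(\eps,p,k-1,2k-1)$- and $(\eps,p,k,k+1)$-hypotheses are precisely what is needed to run this induction up to $j=k$ without ever falling below the admissible density thresholds. I would then set aside a small random reservoir $R\subseteq V(G)$ of size~$\mu n$, keeping only the event that $R$ inherits pseudorandomness from~$G$ and supports the expected number of extensions of any $k$-tuple.

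Next I would prove a connecting lemma: any two vertex-disjoint $k$th power of paths with generic end-$k$-tuples $\vecu$ and $\vecv$ can be joined by a short $k$th power of a path entirely inside~$R$. One builds this by extending $\vecu$ greedily inside $R$ via the counting lemma, and finally meeting the common neighbourhood of~$\vecv$, which has size $\approx p^k n$; this terminal intersection is the reason the $(\eps,p,k,k+1)$-hypothesis is needed, since $\eps p^{k+1}n$ is the smallest admissible threshold. Using this connecting lemma, I would construct an absorbing path $P_{\mathrm{abs}}$: show that each vertex~$v$ has many \emph{absorbing gadgets}, i.e.\ constant-size vertex sets $A_v$ such that both $A_v$ and $A_v\cup\{v\}$ span $k$th powers of paths with the same end-$k$-tuples, and then concatenate a random collection of such gadgets into one path~$P_{\mathrm{abs}}$ with the property that any small leftover set $X$ can be swallowed into it while preserving the end-$k$-tuples. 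To finish, apply greedy extension (using the counting lemma) on $V(G)\setminus(V(P_{\mathrm{abs}})\cup R)$ to cover all but $o(n)$ of this set by a bounded number of $k$th power of paths; link these paths and $P_{\mathrm{abs}}$ end-to-end via the connecting lemma to obtain a $k$th power of a cycle $C$ missing only a small set $X$; then absorb~$X$ using $P_{\mathrm{abs}}$.

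The main obstacle is the absorbing lemma. For $k\ge 3$ an absorbing gadget is a rather rigid constant-size configuration, and lower-bounding the number of gadgets at each vertex requires several nested applications of~\eqref{eq:pseudodisc} in which the densities of the intermediate sets drop by a factor of~$p$ at each step. Verifying that all auxiliary sets remain above the thresholds $\eps p^{k-1}n$, $\eps p^k n$ and $\eps p^{2k-1}n$ --- which is exactly what the two pseudorandomness hypotheses permit --- is the technical heart of the argument and accounts for the particular pairs of parameters appearing in the statement.
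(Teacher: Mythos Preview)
Your overall plan is close in spirit to the paper's, but not identical, and your sketch contains one genuine gap.

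\textbf{Absorbing versus reservoir.} The paper does not use the classical absorbing method. Instead it uses the dual \emph{reservoir method}: rather than building a path $P_{\mathrm{abs}}$ that can \emph{absorb} any small leftover set, it builds a path $P$ containing a designated set $R$ of \emph{reservoir} vertices with the property that any $W\subset R$ can be \emph{released} from $P$ while keeping the same end-$k$-tuples. The leftover $L$ is then covered by a separate $k$-path $P''$ built inside $L\cup R$, and the vertices of $R$ used in $P''$ are released from $P$. Structurally the gadgets coincide (your absorber $A_v$ is exactly the paper's ``reservoir graph'' with reservoir vertex $v$, minus $v$), so the difference is in the bookkeeping rather than the combinatorics. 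One concrete advantage of the paper's version is that everything is done deterministically, giving a polynomial-time algorithm; your random reservoir and random gadget selection would not.

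\textbf{Where the two hypotheses are really used.} You have the roles reversed. The $(\eps,p,k,k+1)$-pseudorandomness is what lets you build the absorbing/reservoir gadget for a vertex $r$: one must find a $2k$-vertex $k$-path entirely inside $N(r)$, which forces common neighbourhoods of up to $k$ vertices inside a set of size $\Theta(pn)$, hence sets of size $\Theta(p^{k+1}n)$. The $(\eps,p,k-1,2k-1)$-pseudorandomness is what drives the \emph{Connection Lemma}, not the gadget construction.

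\textbf{The real gap: connecting.} Your sketch of the connecting lemma---extend $\vecu$ greedily and ``finally meet'' $N(\vecv)$---does not work as stated. Landing a single greedy $k$-path in the right configuration near $\vecv$ requires the last $k$ vertices simultaneously to lie in the correct nested neighbourhoods of $\vecv$, and a one-shot greedy extension gives no such guarantee. The paper's Connection Lemma instead fans out from $\vecu$ through a sequence of sets $U_1,\dots,U_{2k}$ and proves, by a delicate induction, that \emph{almost every} crossing $K_k$ in the last $k$ sets is the end of some $k$-path from $\vecu$; doing the same from $\vecv$ and applying pigeonhole on $K_k$'s (not on vertices) yields a common clique. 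The induction step is where $(\eps,p,k-1,2k-1)$-pseudorandomness enters: one needs to control, for a $(k-1)$-clique $\tpl{c}$, both its $\Theta(p^{k-1}n)$-sized common neighbourhood on one side and a set of size $\Theta(p^{2k-1}n)$ of ``bad'' extensions on the other. This clique-counting pigeonhole is the technical heart of the paper; you have located the difficulty in the wrong place (the gadget lemma is comparatively routine for $k\ge 3$, and only intricate for $k=2$).
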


We remark that our proof of Theorem~\ref{thm:main} also yields a
deterministic polynomial time algorithm for finding a copy of the $k$th
power of the Hamilton cycle. The proof technique (see
Section~\ref{subsec:outline} for an overview) is partly inspired by the
methods used in~\cite{TightCycle} (which have similarities to those of
K\"uhn and Osthus~\cite{KOPosa}).

It is immediate from the discussion above that our theorem implies the
following result for jumbled graphs.

\begin{corollary}[Powers of Hamilton cycles in jumbled graphs]
  For every $k\ge 2$ and $\beta>0$ there is an $\eps>0$ such that for any
  $p=p(n)$ with $0<p<1$ the following holds. Let~$G$ be a graph on~$n$
  vertices with minimum degree $\delta(G)\geq \beta pn$.  
  \begin{enumerate}[label=\abc]
    \item If~$G$ is $(p,\eps p^{5/2}n)$-jumbled  then~$G$
      contains a square of a Hamilton cycle.
    \item If~$G$ is $(p,\eps p^{3k/2}n)$-jumbled  then~$G$
      contains a $k$th power of a Hamilton cycle.
  \end{enumerate}
\end{corollary}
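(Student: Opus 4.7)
The plan is a direct reduction to Theorem~\ref{thm:main}: I simply check that the jumbledness hypothesis implies the required pseudorandomness. Suppose $G$ is $(p,\eps^2 p^s n)$-jumbled. For disjoint $X,Y\subset V(G)$ with $|X|\ge\eps p^k n$ and $|Y|\ge\eps p^\ell n$, the defining inequality~\eqref{eq:mixing} of jumbledness gives
\[ \bigl|e(X,Y)-p|X||Y|\bigr| \le \eps^2 p^s n\sqrt{|X||Y|} \le \eps p\cdot|X||Y|, \]
where the second inequality rearranges to $\eps p^{s-1}n\le \sqrt{|X||Y|}$ and holds whenever $k+\ell\le 2s-2$, since $\sqrt{|X||Y|}\ge\eps p^{(k+\ell)/2}n$. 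So $G$ is $(\eps,p,k,\ell)$-pseudorandom for every such pair $(k,\ell)$; this is just the observation already made in the discussion immediately preceding Theorem~\ref{thm:main}.

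For part~\itm{a}, I would take $s=5/2$: a $(p,\eps^2 p^{5/2}n)$-jumbled graph is then $(\eps,p,1,2)$-pseudorandom, so Theorem~\ref{thm:main}\itm{a} produces the square of a Hamilton cycle. Absorbing the square of~$\eps$ into a smaller constant matches the statement of the corollary.

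For part~\itm{b} with $k\ge 3$, I would apply the observation with $s=3k/2$, so that $2s-2=3k-2$. Both required index sums satisfy $k+\ell\le 2s-2$, because $(k-1)+(2k-1)=3k-2$ and $k+(k+1)=2k+1\le 3k-2$ for $k\ge 3$. Hence a $(p,\eps^2 p^{3k/2}n)$-jumbled graph meets both pseudorandomness hypotheses of Theorem~\ref{thm:main}\itm{b}, which then supplies the $k$th power of a Hamilton cycle. The case $k=2$ is already covered by part~\itm{a}, since $(p,\eps p^{3}n)$-jumbledness is stronger than $(p,\eps p^{5/2}n)$-jumbledness for $p\in(0,1)$.

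There is no genuine obstacle: the whole argument is a substitution of~$s$ followed by one invocation of Theorem~\ref{thm:main}. The only thing to watch is that the jumbledness bound carries an extra factor $\sqrt{|X||Y|}$ compared with the right-hand side $\eps p|X||Y|$ of~\eqref{eq:pseudodisc}, and it is precisely this factor---bounded below by $\eps p^{(k+\ell)/2}n$ on the relevant pairs $(X,Y)$---that dictates the exponent $s=(k+\ell+2)/2$ on~$p$ in the jumbledness parameter.
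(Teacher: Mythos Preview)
Your proposal is correct and is exactly the argument the paper intends: the paper declares the corollary ``immediate from the discussion above,'' that discussion being precisely the observation that $(p,\eps^2 p^s n)$-jumbledness implies $(\eps,p,k,\ell)$-pseudorandomness whenever $k+\ell\le 2s-2$, followed by a direct appeal to Theorem~\ref{thm:main}. Your verification of the two index-sum inequalities for $s=3k/2$ and the handling of the $k=2$ overlap and the $\eps$-versus-$\eps^2$ absorption are the only details to fill in, and you have them right.
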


As a consequence we also obtain a corresponding corollary for $(n,d,\lambda)$-graphs.

\begin{corollary}[Powers of Hamilton cycles in
  $(n,d,\lambda)$-graphs]\label{cor:ndlambda}
  For all $k\ge2$ there is $\eps>0$ such that for every
  $(n,d,\lambda)$-graph $G$,
  \begin{enumerate}[label=\abc]
  \item if $\lambda\le\eps d^{5/2}n^{-3/2}$ then~$G$
    contains a square of a Hamilton cycle,
  \item if $\lambda\le\eps d^{3k/2}n^{1-3k/2}$
    then~$G$ contains a $k$th power of a Hamilton cycle.
  \end{enumerate}
\end{corollary}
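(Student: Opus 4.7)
The plan is to deduce Corollary~\ref{cor:ndlambda} directly from the preceding jumbled-graph corollary by invoking the expander mixing lemma~\eqref{eq:nd_lambda}. Since every $(n,d,\lambda)$-graph is automatically $\big(\tfrac{d}{n},\lambda\big)$-jumbled, the only thing to check is that the hypothesis $\lambda\le\eps d^{3k/2}n^{1-3k/2}$ (respectively $\lambda\le\eps d^{5/2}n^{-3/2}$) translates into the required jumbledness bound once we set $p:=d/n$.

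Concretely, for part~\itm{b}, given $k\ge 2$ I would fix $\beta:=1$ and apply part~\itm{b} of the jumbled-graph corollary to obtain a constant $\eps_0>0$; I then take $\eps:=\eps_0$ in the statement of Corollary~\ref{cor:ndlambda}. Let $G$ be any $(n,d,\lambda)$-graph with $\lambda\le\eps d^{3k/2}n^{1-3k/2}$, and set $p:=d/n$. Because $G$ is $d$-regular we have $\delta(G)=d=pn\ge\beta pn$, so the minimum-degree hypothesis is automatically satisfied. The expander mixing lemma yields that $G$ is $(p,\lambda)$-jumbled, and the bound
\begin{equation*}
\lambda\;\le\;\eps\,d^{3k/2}n^{1-3k/2}\;=\;\eps\,p^{3k/2}n
\end{equation*}
upgrades this to $\big(p,\eps p^{3k/2}n\big)$-jumbledness. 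Part~\itm{b} of the jumbled-graph corollary then delivers a $k$th power of a Hamilton cycle in $G$.

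Part~\itm{a} is handled identically via part~\itm{a} of the jumbled-graph corollary: the hypothesis $\lambda\le\eps d^{5/2}n^{-3/2}$ becomes $\lambda\le\eps p^{5/2}n$ under the substitution $p=d/n$, which is exactly the jumbledness required to conclude the existence of a square of a Hamilton cycle. Since the whole argument is nothing more than a change of parameters together with~\eqref{eq:nd_lambda}, I do not foresee any real obstacle; the substantive work has already been carried out in Theorem~\ref{thm:main} and its jumbled-graph consequence.
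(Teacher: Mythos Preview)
Your proposal is correct and is exactly the argument the paper has in mind: the corollary is stated without proof precisely because it follows from the jumbled-graph corollary via the expander mixing lemma and the substitution $p=d/n$, together with the trivial observation that a $d$-regular graph satisfies $\delta(G)=pn\ge\beta pn$ for any $\beta\le1$. There is nothing to add.
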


In particular, under the conditions above, the graph~$G$ contains a
spanning triangle factor and a spanning $K_{k+1}$-factor, respectively,
if~$3\mid n$ and~$(k+1)\mid n$.  Thus we improve on the result of
Krivelevich, Sudakov and Szab\'o~\cite{KSS04} for triangle factors and
extend it to $K_{k+1}$-factors.

As remarked above even for $k=2$ our upper bound for $\lambda$ does not
match the known lower bound. For $k>2$ the situation gets even more
complicated since `good' lower bounds for the appearance of
$K_{k+1}$ (let alone $k$th powers of Hamilton cycles) in
$(n,d,\lambda)$-graphs are not available. The best we can do is to observe
that $G(n,p)$ with $(\ln n/n)^{1/(k-\eps)}\ll p\ll n^{-1/k}$ almost surely has no $k$th power of a
Hamilton cycle, and that such a graph for any fixed $\eps>0$ is almost surely
$(\eps,p,k-1-\eps,k-1-\eps)$-pseudorandom.

\subsection{Counting} 

Closely related to the question of the appearance of a certain subgraph in
random or pseudorandom graphs is the question of how many copies of this
subgraph are actually present. Janson~\cite{Jan94} and Cooper and Frieze~\cite{CooFri}
studied this problem for Hamilton cycles in
$G(n,p)$. Motivated by these results Krivelevich~\cite{KrivCount} recently
turned to counting Hamilton cycles in
sparse $(n,d,\lambda)$-graphs~$G$. He showed
that for every $\eps>0$ and sufficiently large~$n$, if $\lambda\le d/(\log
n)^{1+\eps}$ and $\log\lambda\ll\log d-\log n/\log d$ then~$G$ contains 
$n!(d/n)^n\big(1+o(1)\big)^n$ Hamilton cycles. This count is close to the
expected number of labeled Hamilton cycles in $G(n,p)$ with $p=d/n$, which is
$n!(d/n)^n$.

Krivelevich remarked that jumbled graphs may have isolated vertices and
thus no Hamilton cycles at all. The same applies to our notion of
pseudorandomness. If however, as in our main result, we combine this
pseudorandomness with a minimum degree condition to avoid this obstacle, we
do obtain a corresponding result concerning the number of Hamilton cycle
powers in such graphs. Again, we obtain a count close to $p^{kn}n!$, which
is the expected number of labeled copies of the $k$th power of a Hamilton
cycle in $G(n,p)$. Note that (unlike Krivelevich) we do not provide a
corresponding upper bound.

\begin{theorem}\label{thm:countHC}
  For every $k\ge 2$, $\beta$ and~$\nu>0$ there is a constant~$c>0$, such
  that for every $\eps=\eps(n)\le c/\log^2 n$ and $p=p(n)$ with
  $0<p<1$ the following holds.  Let~$G$ be a graph on~$n$ vertices
  with minimum degree $\delta(G)\geq \beta pn$.  Suppose that~$G$ is
  $(\eps,p,1,2)$-pseudorandom if $k=2$, and
  $(\eps,p,k-1,2k-1)$-pseudorandom and $(\eps,p,k,k+1)$-pseudorandom
  if $k>2$.  Then~$G$ contains  at least $(1-\nu)^np^{kn}n!$ copies of the $k$th
  power of a Hamilton cycle.
\end{theorem}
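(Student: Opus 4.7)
The plan is to count ordered vertex sequences $(v_1,\dots,v_n)$ forming a $k$th power Hamilton cycle by building the sequence one vertex at a time, controlling the number of extensions at each step via the pseudorandomness hypotheses, and using Theorem~\ref{thm:main} as a black box to close up the last $o(n)$ vertices. Before building, I would select a reservoir set $R\subseteq V(G)$ of size $m=n/\log n$ at random; a Chernoff plus union bound argument shows that with positive probability, both $G-R$ and the subgraph induced on $R$ (together with their cross-edges) inherit the pseudorandomness conditions up to a mild loss in parameters and retain the minimum-degree property.

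Next, I would sequentially choose $v_1,\dots,v_{n-m}$ in $V(G)\setminus R$ so that they form the $k$th power of a path, starting from an initial $k$-clique $(v_1,\dots,v_k)$ and extending so that each $v_{i+1}$ lies in the common neighborhood $W_i:=N(v_{i-k+1})\cap\cdots\cap N(v_i)$, avoids previously used vertices, and avoids $R$. Call a prefix \emph{typical} if all common neighborhoods of any $j$ of its last $k$ vertices have sizes within $(1\pm\delta)p^j n$ for $j=1,\dots,k$; using the $(\eps,p,k-1,2k-1)$- and $(\eps,p,k,k+1)$-pseudorandomness assumptions inductively, one shows that typical prefixes satisfy $|W_i|=(1\pm\delta)p^k n$, and that all but a $\delta$-fraction of vertices in $W_i$ yield a typical extension, for some $\delta=O(\eps)$. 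Thus at each step there are at least $(1-2\delta)p^k n-i-m$ typicality-preserving choices for $v_{i+1}$.

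After building such a path, $m$ vertices of $R$ together with the endpoint windows of the path remain to be woven into a closure. I would apply Theorem~\ref{thm:main}(b) to an auxiliary pseudorandom graph built from $R$ with prescribed endpoint connections --- a modification extracted from the proof of the main theorem --- to obtain at least one valid closure per partial path. Multiplying the per-step counts then yields at least
\[
\prod_{i=0}^{n-m-1}\bigl((1-2\delta)p^k n-i-m\bigr)\ \ge\ (1-\nu)^n p^{kn}n!
\]
ordered sequences, where the compounded factor $(1-2\delta)^n$ is brought below $(1-\nu)^n$ using the hypothesis $\eps\le c/\log^2 n$ to absorb all polylogarithmic losses incurred when iterating the pseudorandomness estimates and when applying the main theorem to the closure subgraph; overcounting of each unordered cycle by its $2n$ rotations and reflections is absorbed in the same slack.

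The main obstacle is the typicality-propagation claim in the sequential extension step: one must show that only a small fraction of the vertices in $W_i$ destroy the iterated common-neighborhood estimates. Concretely, for each $j<k$ one needs that at most $O(\delta)p^k n$ vertices $v\in W_i$ fail to satisfy $|N(v)\cap N(v_{i-j+2})\cap\cdots\cap N(v_i)|=(1\pm\delta)p^{j}n$. This is where both hypotheses play their distinct roles: the $(\eps,p,k-1,2k-1)$-condition governs cross-neighborhood counts between nested common neighborhoods at intermediate depths, while the $(\eps,p,k,k+1)$-condition controls the final intersection down to the $p^k n$ scale needed for the extension step itself.
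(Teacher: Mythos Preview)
Your high-level strategy --- count choices during a long greedy extension and close up the remaining $o(n)$ vertices without counting --- is exactly the paper's. But three steps do not work as written.

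\textbf{The extension count is wrong.} You estimate the number of valid next vertices as $(1-2\delta)p^kn-i-m$: you take $|W_i|\approx p^kn$ in the \emph{whole} graph and subtract the $i+m$ forbidden vertices. For small $p$ this quantity becomes non-positive as soon as $i\gtrsim p^kn$, which is long before $i=n-m$; the greedy construction simply stops. The correct object to track is the common neighbourhood inside the \emph{current} leftover set $L_i$ (of size $n-m-i$), which by pseudorandomness has size about $p^k|L_i|$. The paper formalises this as a ``Counting version of the Extension lemma'': it maintains $\deg_{L_i}(x_{i-k+1},\dots,x_i)\ge\big((1-\tfrac{\nu}{2k})p\big)^k|L_i|$ and shows that a $(1-\tfrac{\nu}{2k})$-fraction of vertices in $N_{L_i}(x_{i-k+1},\dots,x_i)$ preserve this bound.

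\textbf{The reservoir is too large.} Even with the count corrected to $\prod_i (1-O(\delta))p^k(n-m-i)\approx p^{k(n-m)}(n-m)!$, the shortfall against the target $p^{kn}n!$ is $(p^kn)^m$. For $p$ bounded away from~$0$ and $m=n/\log n$ this is $n^{n/\log n}=e^{\Theta(n)}$, which cannot be absorbed into $(1-\nu)^n$ for small $\nu$. One needs $m\log n=o(n)$, and the paper takes $m=\Theta\big(n/(\log n)^2\big)$; this is precisely why the hypothesis is $\eps\le c/\log^2 n$.

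\textbf{The closure is not a black box.} Theorem~\ref{thm:main} produces a Hamilton $k$-cycle in a given graph; it does not produce a Hamilton $k$-path through $R$ with end $k$-tuples prescribed to match your greedy path. Your ``modification extracted from the proof of the main theorem'' is in fact the entire content of the Covering and Connection Lemmas. The paper sidesteps this by reversing your order: it first builds the reservoir path through $R$ (so the reservoir property~\ref{reslem:W} is available), then performs the counted greedy extension from its end, and only then covers the tiny leftover and connects back, \emph{re-using} vertices of $R$ via the reservoir property. Building the greedy path first, as you do, leaves you needing a spanning $k$-path in $G[R]$ with two fixed end-tuples and no spare vertices outside $R$ to manoeuvre with --- which is not something any lemma here provides.
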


With some minor modifications, this result follows from our proof of
Theorem~\ref{thm:main}. For the sake of clarity, we sketch these
modifications after detailing the proof of Theorem~\ref{thm:main}.

\subsection{Organisation}

The remainder of this paper is organised as follows. In
Section~\ref{sec:main} we give some basic definitions, outline our proof
strategy, provide the main lemmas and use them to obtain
Theorem~\ref{thm:main}. In Sections~\ref{sec:connection}
and~\ref{sec:reservoir} we prove our three main lemmas. We sketch how to
modify the proof of Theorem~\ref{thm:main} to get
Theorem~\ref{thm:countHC} in Section~\ref{sec:count}, and close with some
remarks and open problems in Section~\ref{sec:conc}.

\section{Main lemmas and proof of the main theorem}
\label{sec:main}

\subsection{Notation}

An $s$-tuple $(u_1,\ldots,u_s)$ of vertices is an ordered set of vertices. We
often denote tuples by bold symbols, and occasionally also omit the brackets
and write $\vecu=u_1,\ldots,u_s$. 
%The \emph{reverse} of the $s$-tuple
%$\vecu$ is the tuple $(u_s,\ldots,u_1)$.

Given a graph $H$, the graph $H^k$, called the $k$th \textit{power} of
$H$, is the graph on $V(H)$ where two distinct vertices $u$ and $v$
are adjacent if and only if their distance in $H$ is at most
$k$.

For simplicity we also call the $k$th power of a path a
\emph{$k$-path}, and the $k$th power of a cycle a \emph{$k$-cycle}.
We will usually specify $k$-paths and $k$-cycles by giving the
(cyclic) ordering of the vertices in the form of a vertex tuple.  We
say that the \emph{start} $s$-tuple of a $k$-path
$P=(u_1,\ldots,u_\ell)$ is $(u_s,\ldots,u_1)$, and the \emph{end}
$s$-tuple is $(u_{\ell-s+1},\ldots,u_\ell)$ (the
vertices~$u_{s+1},\dots,u_{\ell-s}$ are said to be internal).  In
these definitions, we shall often have~$s=k$.

For a given graph $G$ let $N_{X}(x)$ be the set of neighbours of $x$ in $X\subset V(G)$. For
an $\ell$-tuple $\vecx_{\ell}=(x_1,\ldots,x_{\ell})$ of
vertices let $N_X(x_1,\dots,x_{\ell})$ denote the common 
neighbourhood of $x_1,\dots,x_{\ell}$ in $X$, and let
$\deg_X(x_1,\ldots,x_{\ell})=|N_X(x_1,\dots, x_{\ell})|$.

We say that $\vecx_{\ell}$ is $(\varrho,p)$\emph{-connected} to a vertex set
$X$
if $x_1,\ldots, x_{\ell}$ forms a clique in $G$ and  
\begin{equation}\label{eq:connected}
 \deg_{X}(x_i,\dots,x_{\ell})\ge \varrho
 \left(\frac{p}{2}\right)^{\ell-i+1}|X| 
\end{equation}
for every $i\in[\ell]=\{1,\dots,\ell\}$.
To motivate this definition, note that the bound in~\eqref{eq:connected}
corresponds to the expected number of common neighbours of
$(x_i,\dots,x_\ell)$ in $X$ in the random graph $G(n,p)$, up to a constant
factor.

A vertex set~$Y\subset X$ \emph{witnesses} that~$\vecx_{\ell}$ is
$(\varrho,p)$-connected to~$X$ if for every $i\in[\ell]$ we have
$\big| Y\cap N_X(x_i,\dots,x_{\ell}) \big|\ge\varrho
\left(\frac{p}{2}\right)^{\ell-i+1} |X|$.

\begin{remark}\label{rem:witness}
  Since the sets $N_X(x_1,\dots,x_{\ell})$, $N_X(x_2,\dots,x_{\ell})$,
  \dots, $N_X(x_{\ell})$ are nested we have that if $\vecx_{\ell}$ is
  $(\varrho,p)$-connected to~$X$, then there is a set $Y\subset X$ with
  $|Y|=\varrho\frac{p}{2}|X|$ vertices which witnesses this connectedness.
\end{remark}

In our proofs we shall additionally frequently make use of the following observation
concerning our pseudorandomness notion.

\begin{remark}\label{rem:pseudo}
  If $0<p\le 1/2$ and $\eps<1/8$, and the $n$-vertex graph $G$ is
  $(\eps,p,k,\ell)$-pseudorandom, then $G$ has a vertex $y$ of degree at
  most $3n/4$. Furthermore, letting $X=V(G)\setminus\big(\{y\}\cup N(y)\big)$ and
  $Y=\{y\}$ we see that the pseudorandomness condition~\eqref{eq:pseudodisc} does not hold. It follows
  that $1<\eps p^\ell n$, or equivalently $p^\ell n>\eps^{-1}$. A similar
  statement holds if $1/2\le p<1$, taking $X=N(y)$. Thus assuming the
  $n$-vertex graph $G$ to be $(\eps,p,k,\ell)$-pseudorandom for any $0<p<1$
  implicitly means we assume $p^\ell n>\eps^{-1}$.
\end{remark}

\subsection{Outline of the proof}
\label{subsec:outline}

Suppose that $G$ is an $(\eps,p,k-1,k)$-pseudorandom graph on $n$ vertices.
One crucial observation, which forms the starting point of our proof, is
that it is relatively easy to find an almost spanning $k$-path
in~$G$. Indeed, it is not hard to check (see the Extension lemma,
Lemma~\ref{lem:onestep}) that~$G$ contains copies of~$K_k$ and that
typically such a $K_k$-copy is well-connected to the rest of the graph in
the following sense. There are many vertices which extend this $K_k$-copy
to a $k$-path on $k+1$ vertices. Iterating this argument we can greedily
build a $k$-path~$P'$ covering most of~$G$. Let~$L$ be the set of leftover
vertices.

Thus, the true challenge is to incorporate the few remaining vertices
into~$P'$ and to close~$P'$ into a $k$-cycle. To tackle the second of these
tasks we will establish a Connection lemma (Lemma~\ref{lem:connect}), which
asserts that any two pairs of $k$-cliques in~$G$ which are sufficiently
well-connected to a set~$U$ of vertices can be connected by a
short $k$-path with interior vertices in~$U$.  At this point, if~$k>2$, we shall need to require
that~$G$ be $(\eps,p,k-1,2k-1)$-pseudorandom.

For the first task, we make use of the \emph{reservoir method} developed
in~\cite{TightCycle} (see also~\cite{KOPosa} for a similar method). In
essence, the fundamental idea of this method is to ensure that~$P'$
contains a sufficiently big proportion of vertices which are free to be
taken out of~$P'$ and used otherwise.  More precisely, we shall construct
(see the Reservoir lemma, Lemma~\ref{lem:reslem}) a path $P$ with the
\emph{reservoir property}: There is a subset $R$ of $V(P)$, called the
\emph{reservoir}, such that for any $W\subset R$ there is a $k$-path in $G$
whose vertex set is $V(P)\setminus W$ and whose ends are the same as those
of~$P$. We also call~$P$ a \emph{reservoir path}.  We then use the greedy method
outlined above to extend~$P$ to an almost spanning $k$-path~$P'$.  For this step,
if~$k>2$, we shall need to require that~$G$ be
$(\eps,p,k,k+1)$-pseudorandom.

With the reservoir property we are now in good shape to incorporate the
leftover vertices~$L$ into~$P'$ (and then close the path into a cycle): We
show, using a Covering lemma (Lemma~\ref{lem:covlem}), that we can find a
$k$-path $P''$ in $L\cup R$ covering all vertices of~$L$ and using only a
small fraction of~$R$ (this is possible because~$R$ is much bigger
than~$L$).  Finally we connect both ends of $P'$ and $P''$ using some of
the remaining vertices of~$R$ with the help of the Connection lemma (again,
this is possible because many vertices of~$R$ remain).

Now the only problem is that some vertices of~$R$ may be used twice, in~$P'$
and in~$P''$ or the connections. But this is where the reservoir property
comes into play. This property asserts that there is a $k$-path $\widetilde P$
which uses all vertices of~$P'$ except these vertices. Finally $\widetilde P$
and $P''$ together with the connections form the desired spanning $k$-cycle.

\subsection{Main lemmas}

The proof of Theorem~\ref{thm:main} relies on four main lemmas, the
Extension lemma, the Reservoir lemma, the Covering lemma and the Connection
lemma, which we will state and explain in the following.

Our first lemma, the Extension lemma, states that in a sufficiently
pseudorandom graph all well-connected $k$-tuples have a common neighbour
which together with the last $k-1$ vertices of this $k$-tuple form again a
well-connected $k$-tuple.

\begin{lemma}[Extension lemma]\label{lem:onestep}
Given $k\geq 2$ and $\delta>0$ there is an $\eps>0$ such that
for all $0<p<1$, all 
$(\eps,p,k-1,k)$-pseudorandom graphs~$G$ on $n$ vertices, 
and all disjoint vertex sets $L$ and $R$ with $|L|,|R|\geq \delta n$ the
following holds.
 
Let $\vecx=(x_1,\dots,x_k)$ be a $k$-tuple which is
$(\tfrac18,p)$-connected to both~$L$ and~$R$.  Then there is a vertex
$x_{k+1}$ of $L\cap N(x_1,\ldots,x_k)$ such that
$(x_2,\ldots,x_{k+1})$ is $(\tfrac16,p)$-connected to both~$L$
and~$R$.
\end{lemma}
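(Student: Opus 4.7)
The plan is to find $x_{k+1}$ in $L \cap N(x_1, \ldots, x_k)$ by showing that among the many candidates there, only a negligible fraction fails the $(\tfrac{1}{6}, p)$-connectedness required for $(x_2, \ldots, x_{k+1})$. The $(\tfrac{1}{8}, p)$-connectedness of $\vecx$ to $L$ gives
\[|L \cap N(x_1, \ldots, x_k)| \geq \tfrac{1}{8}(p/2)^k |L| \geq \tfrac{\delta}{8 \cdot 2^k} p^k n\]
candidates, each of which automatically makes $(x_2, \ldots, x_{k+1})$ into a $(k+1)$-clique. Setting $T_{i, X} := N_X(x_{i+1}, \ldots, x_k)$ for $i < k$ and $T_{k, X} := X$, the $(\tfrac{1}{6}, p)$-connectedness of $(x_2, \ldots, x_{k+1})$ to $X \in \{L, R\}$ unpacks, after relabelling, to the $k$ inequalities $|N(x_{k+1}) \cap T_{i, X}| \geq \tfrac{1}{6}(p/2)^{k-i+1}|X|$. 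Since the $(\tfrac{1}{8}, p)$-connectedness of $\vecx$ gives $|T_{i, X}| \geq \tfrac{1}{8}(p/2)^{k-i}|X|$, the ``typical'' value $p|T_{i, X}| \geq \tfrac{1}{4}(p/2)^{k-i+1}|X|$ comfortably exceeds the target, leaving room for a $(1 - O(\eps))$ loss.

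For each of the $2k$ pairs $(i, X)$, I would bound the bad set
\[B_{i, X} := \{v \in L \cap N(x_1, \ldots, x_k) : |N(v) \cap T_{i, X}| < \tfrac{1}{6}(p/2)^{k-i+1}|X|\}\]
by $|B_{i, X}| \leq C \eps p^k n$ via a standard double-counting argument: one applies $(\eps, p, k-1, k)$-pseudorandomness to $B_{i, X}$ against a suitable disjoint subset $Y \subset T_{i, X}$ of size at least $\eps p^{k-1} n$ to get the lower bound $e(B_{i, X}, Y) \geq (1-\eps) p |B_{i, X}| |Y|$, and compare with the bad-condition upper bound $e(B_{i, X}, Y) \leq |B_{i, X}| \cdot \tfrac{1}{6}(p/2)^{k-i+1}|X|$. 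For $|Y|$ a constant fraction of $|T_{i, X}|$, these are inconsistent since $(1-\eps)/8 > 1/12$ whenever $\eps < 1/3$. A union bound then yields $|\bigcup_{i, X} B_{i, X}| \leq 2kC \eps p^k n$, strictly less than the candidate-set size once $\eps$ is chosen sufficiently small in terms of $\delta$ and $k$, and any $x_{k+1}$ in the non-empty complement satisfies the lemma. The required lower bounds $|T_{i, X}| \geq \eps p^{k-1} n$ and $|L \cap N(x_1, \ldots, x_k)| \geq (2k C + 1) \eps p^k n$ follow directly from $|L|, |R| \geq \delta n$ once $\eps$ is small enough in terms of $\delta$ and $k$.

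The main technical obstacle is securing the disjoint probe $Y \subset T_{i, X}$ needed in the pseudorandomness application when $X = L$, since then $B_{i, L} \subset T_{i, L} \subset L$ are not disjoint. The natural choice $Y := T_{i, L} \setminus B_{i, L}$ is disjoint from $B_{i, L}$ by construction and handles the generic case when its size is still at least $\eps p^{k-1} n$; otherwise $B_{i, L}$ fills almost all of $T_{i, L}$, which corresponds to an atypically sparse internal structure of $T_{i, L}$ and can be excluded by working with an a priori bipartition $T_{i, L} = T_1 \sqcup T_2$ into two halves and running the double-count separately on each disjoint pair $(B_{i, L} \cap T_j, T_{j'})$, so that the pseudorandomness hypothesis applies to sets that are automatically disjoint. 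The small loss in constants incurred by splitting is absorbed by the gap between the expected degree factor $\tfrac{1}{4}$ and the target $\tfrac{1}{6}$.
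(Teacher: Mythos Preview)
Your approach is essentially the paper's: bound each bad set $B_{i,X}$ by $\eps p^k n$ via pseudorandomness against (a large part of) $T_{i,X}$, then union-bound over the $2k$ pairs. The paper even uses the same constants $\tfrac18$, $\tfrac16$ and the same ``few vertices are atypical into $T_{i,X}$'' mechanism (its Lemma~\ref{lem:pseudorandom}).

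Where you diverge is in handling the disjointness issue for $X=L$, and here you overcomplicate matters. Your ``generic case'' $Y=T_{i,L}\setminus B_{i,L}$ is in fact always the case, once you observe that you may replace $B_{i,L}$ by a subset $B'$ of size exactly $\eps p^k n$ (you are arguing by contradiction that $|B_{i,L}|\ge\eps p^k n$). Since $|T_{i,L}|\ge\tfrac18(p/2)^{k-i}|L|\ge\tfrac{\delta}{2^{k+2}}p^{k-1}n\gg\eps p^k n$ for $\eps$ small in terms of $\delta,k$, the set $Y:=T_{i,L}\setminus B'$ still has $|Y|\ge\tfrac{9}{10}|T_{i,L}|$, and the double count goes through: each $v\in B'$ has $\deg_Y(v)\le\deg_{T_{i,L}}(v)<\tfrac16(p/2)^{k-i+1}|L|\le\tfrac23 p|T_{i,L}|\le\tfrac{20}{27}p|Y|<(1-\eps)p|Y|$, contradicting $(\eps,p,k-1,k)$-pseudorandomness. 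This is exactly what the paper does.

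Your bipartition fallback is therefore unnecessary, and as written it does not work. A vertex $v\in B_{i,L}\cap T_j$ is only known to have fewer than $\tfrac16(p/2)^{k-i+1}|L|$ neighbours in \emph{all} of $T_{i,L}$, hence a fortiori in $T_{j'}$; but the pseudorandom lower bound against the half $T_{j'}$ is only $(1-\eps)p|T_{j'}|\ge(1-\eps)\tfrac18(p/2)^{k-i+1}|L|$, which is \emph{below} the bad threshold $\tfrac16(p/2)^{k-i+1}|L|$. Your stated comparison ``$(1-\eps)/8>1/12$'' is not the inequality that arises here; the relevant one would be $(1-\eps)/8>1/6$, which is false. (One can rescue a bipartition argument by first strengthening the bad condition to ``fewer than $\tfrac{1}{12}(p/2)^{k-i+1}|L|$ neighbours in $T_1$ or in $T_2$'', but this is more work than simply shrinking $B_{i,L}$ as above.)
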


We stress that in this lemma we require and obtain well-connectedness to two
sets $L$ and $R$. This will enable us in the proof of Theorem~\ref{thm:main}
to extend a $k$-path alternatively using vertices of the leftover set $L$
or the reservoir set $R$.

We remark moreover that the assumed $(\tfrac18,p)$-connectedness is weaker
than the $(\tfrac16,p)$-connectedness in the conclusion.  This is useful
when we repeatedly apply the Extension lemma.  It is possible to prove such
a statement because the factor~$\frac12$ in the definition of connectedness
allows for some leeway.

Since the proof of this lemma is short we give it straight away. We use the
following lemma, which is a direct consequence of~\eqref{eq:pseudodisc} and
will frequently be used later as well.

\begin{lemma}\label{lem:pseudorandom}
  If~$G$ is an $(\eps,p,k,\ell)$-pseudorandom graph on~$n$ vertices and
  $X\subset V(G)$ satisfies $|X|\ge\eps p^kn$, then less than
  $\eps p^\ell n$ vertices $v\in V(G)\setminus X$ have
  $\deg_X(v)<(1-\eps)p|X|$, and  less than
  $\eps p^\ell n$ vertices $v\in V(G)\setminus X$ have
  $\deg_X(v)>(1+\eps)p|X|$.
\qed
\end{lemma}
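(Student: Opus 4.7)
The plan is to apply the pseudorandomness hypothesis~\eqref{eq:pseudodisc} directly to the set of ``bad'' vertices via a one-line contradiction; in effect this lemma is just a one-sided averaged reformulation of~\eqref{eq:pseudodisc}, so nothing beyond unpacking definitions is needed.

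For the lower-tail half, I will let $Y := \{v\in V(G)\setminus X : \deg_X(v) < (1-\eps)p|X|\}$ and suppose towards a contradiction that $|Y| \ge \eps p^\ell n$. The sets $X$ and $Y$ are then disjoint with $|X|\ge\eps p^k n$ and $|Y|\ge\eps p^\ell n$, so the $(\eps,p,k,\ell)$-pseudorandomness condition~\eqref{eq:pseudodisc} applies and yields $e(X,Y) > (1-\eps)p|X||Y|$. On the other hand, summing the degree bound defining $Y$ over $v\in Y$ gives $e(X,Y) = \sum_{v\in Y}\deg_X(v) < (1-\eps)p|X||Y|$, a contradiction.

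The upper-tail half is entirely symmetric: I define $Y' := \{v\in V(G)\setminus X : \deg_X(v) > (1+\eps)p|X|\}$, suppose $|Y'|\ge\eps p^\ell n$, and contrast the upper bound $e(X,Y') < (1+\eps)p|X||Y'|$ coming from~\eqref{eq:pseudodisc} with the lower bound $e(X,Y') > (1+\eps)p|X||Y'|$ obtained by summing degrees across $Y'$.

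There is no genuine obstacle; the only thing to verify is that the size thresholds in~\eqref{eq:pseudodisc} line up exactly with the hypothesis $|X|\ge\eps p^k n$ and with the conclusion $|Y|,|Y'|\ge\eps p^\ell n$, which is the case by design. I therefore expect the written-out proof to fit in a handful of lines and to require no further ideas beyond the contradiction sketched above.
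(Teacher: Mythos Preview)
Your argument is correct and is exactly what the paper has in mind: the lemma is stated there with a \qed and introduced as ``a direct consequence of~\eqref{eq:pseudodisc}'', i.e.\ precisely the contradiction you sketch by applying the pseudorandomness bound to the set of bad vertices. There is nothing to add.
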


\begin{proof}[Proof of Lemma~\ref{lem:onestep}]
 Given $k$ and $\delta$ we set
 \begin{equation}\label{eq:onestep:eps}
    \eps=\frac{\delta}{80\cdot k\cdot 2^{k+3}}\,.
  \end{equation}
 Because $\vecx$ is $(\tfrac18,p)$-connected to $L$, for each
$2\le i\le k$ we have
\begin{equation*}
 \deg_{L}(x_i,\dots,x_{k})\ge \frac{1}{8}
\left(\frac{p}{2}\right)^{k-i+1}
|L|\,.
\end{equation*}
We claim that for each $2\le i\le k$ there are less than $\eps p^k n$
vertices which have less than $\tfrac16(p/2)^{k-i+2}|L|$ neighbours in
$N(x_i,\ldots,x_k)\cap L=:Y_i$.  Indeed, we have $|Y_i|\ge
\frac18(\frac{p}{2})^{k-i+1}|L|\ge \frac18(\frac{p}{2})^{k-i+1}\delta
n>10\eps p^{k-1}n$.  Now assume for contradiction that there is a set~$B_i$
of $\eps p^k n$ vertices in~$V(G)$ all of which have less than 
$\tfrac16(p/2)^{k-i+2}|L|\le \frac23p|Y_i|$ neighbours in~$Y_i$. Since
$|B_i|\le\frac{1}{10}|Y_i|$ and thus $|Y_i\setminus
B_i|\ge\frac9{10}|Y_i|>\eps p^{k-1} n$, this implies that each vertex
in~$B_i$ has less than $\frac23p|Y_i|\le \frac23p \frac{10}{9}|Y_i\setminus
B_i|=\frac{20}{27} p|Y_i\setminus B_i|$ neighbours in $Y_i\setminus
B_i$. This however contradicts Lemma~\ref{lem:pseudorandom} because~$G$ is
$(\eps,p,k-1,k)$-pseudorandom.

Similarly, less than $\eps p^k n$ vertices have fewer than
$\tfrac16(p/2)|L|$ neighbours in $L$. The same calculations, replacing $L$
with $R$, also hold. It follows that all but at most $2k\eps p^k n$
vertices $x_{k+1}$ of $N(x_1,\ldots,x_k)\cap L$ have the property
that $(x_2,\ldots,x_{k+1})$ is $(\tfrac16,p)$-connected to both $L$ and
$R$. Finally, since
\[\deg_L(x_1,\ldots,x_k)\ge
\frac18\Big(\frac{p}{2}\Big)^k|L|\ge\frac{\delta p^k n}{
2^{k+3}}\gByRef{eq:onestep:eps}2k\eps p^kn\]
there is indeed a vertex $x_{k+1}$ with this property as desired.
\end{proof}

Our second lemma allows us to construct the reservoir path~$P$
described in the outline, given a suitable reservoir $R$ (see
properties~\ref{reslem:R} and~\ref{reslem:W} of the lemma). In addition, this lemma
guarantees well-connectedness of the ends of this path to the reservoir and
to the remaining vertices in the graph (see properties~\ref{reslem:connV}
and~\ref{reslem:connR} of the lemma). This is necessary so that we can extend the
reservoir path and later connect it to the path covering the leftover
vertices~$L$ using~$R$.

\begin{lemma}[Reservoir lemma]\label{lem:reslem}
  Given $k\ge 2$, $0<\delta< 1/4$ and $0<\beta<1/2$ there exists an $\eps>0$
  % and an $n_0$ 
  such that the following holds.
  % for all $n\ge n_0$.

  Let $0<p<1$ and let $G=(V,E)$ be an $n$-vertex graph.  Suppose that $G$
  is $(\eps,p,1,2)$-pseudorandom if $k=2$, and
  $(\eps,p,k-1,2k-1)$-pseudorandom and $(\eps,p,k,k+1)$-pseudorandom if
  $k>2$.  Let $R\subset V$ satisfy $\delta^2 n/(200k)\le|R|\le\delta
  n/(200k)$ and $\deg_{V\setminus R}(v)\ge \beta pn/2$ for all $v\in
  R$.
  Then there is a $k$-path~$P$ in~$G$ with  the following properties.
  \begin{enumerate}[label=\abc]
  \item\label{reslem:R} $R\subset V(P)$, $|V(P)|\le 50k|R|$, and all
    vertices from~$R$ are internal in~$P$. 
  \item \label{reslem:connV}  The start and end $k$-tuples of $P$ are
    $(\tfrac18,p)$-connected to $V\setminus V(P)$.
  \item\label{reslem:connR} The start and end $k$-tuples of $P$ are
    $(\tfrac12,p)$-connected to $R$  (and thus disjoint from $R$).
  \item\label{reslem:W} For any $W\subset R$, there is a $k$-path with the vertex set
    $V(P)\setminus W$ whose start and end $k$-tuples are identical to those of $P$. 
  \end{enumerate}
\end{lemma}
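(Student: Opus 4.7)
The plan is to build $P$ by concatenating $|R|$ short ``absorbing gadgets'' $F_r$ (one per $r \in R$) into a single $k$-path via repeated application of the Connection lemma (Lemma~\ref{lem:connect}). Each gadget $F_r$ is a constant-size vertex set containing $r$ that admits two distinct $k$-paths, on $V(F_r)$ and on $V(F_r)\setminus\{r\}$ respectively, both having the same start and end $k$-tuples $\vecu_r$ and $\vecv_r$. A natural choice is $V(F_r) = \{u_1,\dots,u_k\}\cup\{r\}\cup\{v_1,\dots,v_k\}$ on $2k+1$ vertices, where $\{u_i\}$ and $\{v_j\}$ each induce a $K_k$, $u_iv_j$ is an edge whenever $i \ge j$, and $r$ is adjacent to every $u_i$ and $v_j$. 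With these edges, the orderings $u_1\cdots u_k\, r\, v_1 \cdots v_k$ and $u_1\cdots u_k\, v_1\cdots v_k$ are both $k$-paths with common start $k$-tuple $(u_k,\dots,u_1)$ and end $k$-tuple $(v_1,\dots,v_k)$.

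I would select $F_{r_1},\dots,F_{r_{|R|}}$ greedily, requiring $F_{r_i}$ to avoid a ``used'' set $B_i$ tracking vertices of all prior gadgets and connection paths. Since the target bound is $|V(P)|\le 50k|R|$, we always have $|B_i|\le\delta n/4$. The gadget exists because $\deg_{V\setminus R}(r)\ge \beta pn/2$ provides a large neighbourhood to work inside, and iterated applications of Lemma~\ref{lem:pseudorandom} — extending greedily along the prescribed adjacency pattern of $F_r$ within $N(r)\setminus B_i$ — yield many copies. Simultaneously I would arrange that $\vecu_{r_i}$ and $\vecv_{r_i}$ are $(\tfrac18,p)$-connected to $V\setminus(B_i\cup R)$, which is a short computation from Lemma~\ref{lem:pseudorandom}. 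The Connection lemma then splices $\vecv_{r_i}$ to $\vecu_{r_{i+1}}$ by a short $k$-path whose interior lies in $V\setminus(B_i\cup R)$, preserving the well-connectedness needed for the next step.

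The first and last gadgets are picked so that the overall start and end $k$-tuples of $P$ are $(\tfrac18,p)$-connected to $V\setminus V(P)$ and $(\tfrac12,p)$-connected to $R$; this is possible because $|R|\ge\delta^2 n/(200k)$ safely exceeds the pseudorandomness threshold $\eps p^{\max}n$ appearing in Lemma~\ref{lem:pseudorandom}. Properties~\ref{reslem:R}, \ref{reslem:connV}, and~\ref{reslem:connR} then hold by construction (the vertex count $|V(P)| \le (2k+1+c_k)|R|\le 50k|R|$ for a suitable constant $c_k$ coming from the Connection lemma). Property~\ref{reslem:W} is immediate: given $W\subset R$, for each $r\in W$ swap the through-$r$ ordering of $F_r$ for the avoiding-$r$ ordering with identical start/end $k$-tuples, and leave the connection segments untouched; the concatenation is a $k$-path on $V(P)\setminus W$ whose start and end $k$-tuples coincide with those of $P$.

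The main obstacle is the gadget-finding step for $k>2$, where the pseudorandomness $(\eps,p,k-1,2k-1)$ is tight: one must iteratively extend $r$'s neighbourhood into the $F_r$-structure while tracking degree losses, in order to guarantee that despite the forbidden set $B_i$ growing with $i$, many copies of $F_r$ remain at every stage. A secondary bookkeeping task is to verify that the $(\tfrac18,p)$- and $(\tfrac12,p)$-connectedness hypotheses demanded at each Connection-lemma splice survive across all $|R|$ splices, which is another iterated application of Lemma~\ref{lem:pseudorandom} together with the fact that $B_i$ always remains of size at most $\delta n/4$.
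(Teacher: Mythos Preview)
Your overall strategy—build one absorbing gadget per $r\in R$ and splice them together with the Connection lemma—is exactly the paper's approach, and your $(2k+1)$-vertex gadget coincides with the paper's construction for $k\ge 3$. (Incidentally, there it is the $(\eps,p,k,k+1)$-pseudorandomness, not the $(\eps,p,k-1,2k-1)$ condition, that drives the greedy construction of a $2k$-vertex $k$-path inside $N(r)$; the latter is only needed for the Connection lemma.)

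However, your gadget fails for $k=2$. Specialising, you need $u_1,u_2,v_1,v_2\in N(r)$ inducing $K_4$ minus the edge $u_1v_2$; in particular $v_1\in N(r)\cap N(u_1)\cap N(u_2)$. But $|N(r)\cap S|=\Theta(pn)$, so $|N(r)\cap N(u_1)\cap S|=\Theta(p^2n)$, and $(\eps,p,1,2)$-pseudorandomness gives you no control over degrees into sets of size $o(pn)$: Lemma~\ref{lem:pseudorandom} only applies when $|X|\ge\eps pn$. You therefore cannot guarantee that the triple common neighbourhood is nonempty, and no reordering of the greedy choices avoids this, since both $u_2$ and $v_1$ have degree~$3$ in the gadget restricted to $N(r)$. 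This is not a technicality that can be patched—the paper devotes a separate argument to $k=2$, building a $47$-vertex gadget (via a pigeonhole argument parallel to the $k=2$ case of the Connection lemma) that never requires more than a pairwise common neighbourhood inside $N(r)$.

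A secondary gap: you assert that $|B_i|\le\delta n/4$ suffices to keep $N(r)\setminus B_i$ large, but for a single vertex $r$ this is false—its entire $S$-neighbourhood (of size $\Theta(pn)\ll\delta n/4$) could already lie in $B_i$. The paper handles this by processing the $r_i$ in increasing order of their degree into the currently unused set $S'$, and then arguing by contradiction: if some $r_i$ has fewer than $\beta\delta pn/8$ unused neighbours, then $\Theta(pn)$ consecutively chosen $r_j$ all had at most $\beta\delta pn/2$ neighbours in $S'$, which violates $(\eps,p,0,1)$-pseudorandomness between $\{r_{i'},\dots,r_i\}$ and $S'$.
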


Our third lemma enables us to cover the leftover vertices~$L$ with a
$k$-path (see property~\ref{covlem:L}). This lemma allows us in addition to
specify a set~$S$ to which the start and end tuples of this path have to
maintain well-connectedness (see property~\ref{covlem:S}).  When we cover
the leftover vertices in the proof of the main theorem, $S$ will be a big
proportion of~$R$ and we will use the well-connectedness to connect the
path covering~$L$ and the extended reservoir path.

Observe that the requirements and conclusions of Lemma~\ref{lem:reslem} and
Lemma~\ref{lem:covlem} overlap substantially. In fact, we shall prove both
lemmas together in Section~\ref{sec:reservoir}.

\begin{lemma}[Covering lemma]\label{lem:covlem}
  Given $k\ge 2$, $0<\delta<1/4$ and $0<\beta<1/2$, there exists an
  $\eps>0$
  % and an $n_0$ 
  such that the following holds.
  % for all $n\ge n_0$.

  Let $0<p<1$ and let $G=(V,E)$ be an $n$-vertex graph.  Suppose that $G$
  is $(\eps,p,1,2)$-pseudorandom if $k=2$, and
  $(\eps,p,k-1,2k-1)$-pseudorandom and $(\eps,p,k,k+1)$-pseudorandom if
  $k>2$.  Let~$L$ and~$S$ be disjoint subsets of $V(G)$ with $|L|\le\delta
  n/(200k)$ and $|S|\ge\delta n$ such that $\deg_{S}(v)\ge \beta\delta
  pn/2$ for all $v\in L$.
  Then there is a $k$-path $P$ contained in $L\cup S$ with the following
  properties.
  \begin{enumerate}[label=\abc]
  \item\label{covlem:L} $L\subset V(P)$ and $|V(P)|\le 50k|L|$.
  \item \label{covlem:S}  The start and end $k$-tuples of $P$ are in $S$ and are
    $(\tfrac18,p)$-connected to $S\setminus V(P)$.
  \end{enumerate}
\end{lemma}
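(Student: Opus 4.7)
The plan is to build $P$ by attaching to each $v\in L$ a local $K_k$-\emph{anchor} containing $v$ together with $k-1$ vertices of $S$, then threading these anchors (and terminating them at two all-$S$ $k$-cliques at the two outer ends) via short $k$-paths through $S$ supplied by the Connection lemma alluded to in the outline.

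\emph{Anchor construction.} Fix $v\in L$. Since $\deg_S(v)\ge\beta\delta pn/2$, the pseudorandomness hypotheses together with Lemma~\ref{lem:pseudorandom} let me iteratively pick $w_1,\dots,w_{k-1}\in S$ so that $(v,w_1,\dots,w_{k-1})$ spans a $K_k$ and both this tuple and its reverse are $(\tfrac16,p)$-connected to $S$. This is essentially the greedy argument in the proof of Lemma~\ref{lem:onestep}, with the two sets $L$ and $R$ there both taken to be $S$: at each step, all but $O(\eps p^{k}n)$ extension choices preserve the required lower bounds on iterated common neighbourhoods in $S$, and the available neighbourhood still has size $\gg\eps p^{k}n$. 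The result is an anchor $A_v$ on $k$ vertices, containing $v$, whose two end $k$-tuples are $(\tfrac16,p)$-connected to $S$.

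\emph{Threading.} First pick, inside $S$, two disjoint $k$-cliques $K_0$ and $K_{\ell+1}$, entirely in $S$, whose orderings are $(\tfrac18,p)$-connected to $S$; these will serve as the start and end of $P$. Enumerate $L=\{v_1,\dots,v_\ell\}$ and construct $P$ inductively: for $i=0,1,\dots,\ell$, apply the Connection lemma with interior set $U_i:=S\setminus V(P_{\text{current}})$ to join the current end of the path (initially an end $k$-tuple of $K_0$; thereafter, an end $k$-tuple of $A_{v_i}$) to an end $k$-tuple of $A_{v_{i+1}}$, or of $K_{\ell+1}$ when $i=\ell$, by a $k$-path of bounded length $C=C(k)$ whose interior lies in $U_i$. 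Each connection contributes at most $C$ interior vertices, so $|V(P)|\le (\ell+2)k+(\ell+1)C\le 50k|L|$ after fixing $C$ suitably and absorbing small cases of $|L|$.

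\emph{Verification and main obstacle.} Conclusion~\ref{covlem:L} is immediate: each $v_i$ lies in $V(A_{v_i})\subset V(P)$ and the length bound is as computed. For~\ref{covlem:S}, the two end $k$-tuples of $P$ are those of $K_0$ and $K_{\ell+1}$, which lie in $S$; since $|V(P)|\le 50k|L|\le\delta n/4$ while $|S|\ge\delta n$, replacing $S$ with $S\setminus V(P)$ costs only an $O(1/(50k))$ fraction of the relevant neighbourhood sizes, absorbed by the slack between the $(\tfrac16,p)$- and $(\tfrac18,p)$-connectedness thresholds. The main obstacle is the bookkeeping: at every inductive step one must ensure that $U_i$ remains linear in $n$ (it does, because at most $50k|L|\le\delta n/4$ vertices of $S$ are ever consumed) and that the two $k$-tuples awaiting connection retain sufficient connectedness to $U_i$ to trigger the Connection lemma. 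Both points go through because the anchors and endpoint cliques were built with slack to the full set $S$, and $U_i$ differs from $S$ by only an $o(|S|)$ fraction, provided $\eps$ is chosen sufficiently small relative to $\beta$ and $\delta$ that all invocations of Lemma~\ref{lem:onestep}, Lemma~\ref{lem:pseudorandom}, and the Connection lemma are valid.
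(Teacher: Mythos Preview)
Your overall strategy---wrap each $v\in L$ in a short local structure and thread these together via the Connection lemma---is indeed what the paper does. But there is a genuine gap in your bookkeeping paragraph, and it is precisely the technical heart of the paper's argument.

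You argue that $U_i$ stays large because at most $50k|L|\le\delta n/4$ vertices of $S$ are ever consumed. That is true, but it is not the relevant quantity. To build the anchor for $v_{i+1}$ (or to certify that its end tuple is connected to $U_i$) you need $v_{i+1}$ to retain many neighbours in $U_i$. You only know $\deg_S(v_{i+1})\ge\beta\delta pn/2$, and since $p$ may be arbitrarily small this can be far smaller than $\delta n/4$. So an adversarial run of your procedure could consume \emph{every} $S$-neighbour of some late $v_{i+1}$ before reaching it, and then neither the anchor construction nor the Connection lemma can be applied. The paper handles exactly this obstacle: at each step it selects the unused vertex of $L$ with the \emph{fewest} neighbours in the current unused set $S'$, and then shows (via a short pseudorandomness argument) that if this minimum ever drops below $\beta\delta pn/8$, then a set of $\Omega(pn)$ previously processed vertices all had small degree into a linear-sized set, contradicting $(\eps,p,0,1)$-pseudorandomness. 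Without this ordering trick the proof does not go through.

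Two smaller points. First, your claim that both orderings of the anchor are $(\tfrac16,p)$-connected to $S$ fails for small $\beta$: the condition on the singleton $v$ requires $\deg_S(v)\ge\tfrac16\cdot\tfrac{p}{2}|S|$, which is not implied by $\deg_S(v)\ge\beta\delta pn/2$ when $\beta$ is tiny. You need a $\beta$-dependent connectedness parameter throughout. Second, the paper's local structure (the reservoir graph of Lemma~\ref{lem:resgraph}) places $v$ as an \emph{internal} vertex, so the end $k$-tuples lie entirely in $S$ and are $(\tfrac12,p)$-connected to $S$; this makes the subsequent connections independent of $v$'s (potentially small) degree. Your $K_k$-anchors keep $v$ at an end, which forces you to track $\deg_{U_i}(v)$ for the connection step as well---doable, but only once the ordering argument above is in place.
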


Our fourth and final main lemma allows us to connect two $k$-tuples with a
short $k$-path. 

\begin{lemma}[Connection lemma]\label{lem:connect}
  For all $k\ge 2$ and $\delta>0$ there is an $\eps>0$ such that the
  following holds.

  Let $0<p<1$ and let $G$ be an $n$ vertex graph.  Suppose that $G$ is
  $(\eps,p,1,2)$-pseudorandom if $k=2$, and
  $(\eps,p,k-1,2k-1)$-pseudorandom if $k>2$.  Let $U\subseteq V(G)$ be a
  vertex set of size $|U|\ge \delta n$.
  If $\vecx$ and $\vecy$ are two disjoint $k$-tuples which are
  $(\delta,p)$-connected to $U$, then there exists a $k$-path $P$ with ends
  $\vecx$ and $\vecy$ of length at most $7k$ such that $V(P)\subset
  U\cup V(\vecx)\cup V(\vecy)$.
\end{lemma}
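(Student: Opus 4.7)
The plan is to construct the connecting $k$-path in the form
\[
  x_k,\,x_{k-1},\,\ldots,\,x_1,\ c_1,\,c_2,\,\ldots,\,c_m,\ y_1,\,y_2,\,\ldots,\,y_k,
\]
so that its start $k$-tuple is $\vecx$, its end $k$-tuple is $\vecy$, and $m\le 5k$ bridge vertices $c_1,\ldots,c_m$ lie in $U$. Translating the $k$-th-power condition, each $c_i$ must be adjacent to the $k$ vertices immediately preceding it in the sequence, and in addition the last $k$ bridge vertices must satisfy $c_{m-k+j}\in W_j:=N(y_1,\ldots,y_j)\cap U$ for $j=1,\ldots,k$. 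From the $(\delta,p)$-connectedness of $\vecy$ to $U$ one has $U\supseteq W_1\supseteq\cdots\supseteq W_k$ and $|W_k|=|N_U(\vecy)|\ge\delta(p/2)^k|U|\ge(\delta^2/2^k)p^kn$.

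The first $m-k$ bridge vertices are chosen by iterated application of (a mild generalization of) the Extension lemma. A variant of Lemma~\ref{lem:onestep}, obtained by inspecting its proof, covers $(\varrho,p)$-connected inputs for arbitrary $\varrho>0$ (with $\eps$ taken correspondingly small in terms of $\varrho$) while still outputting a tuple $(\Omega(\varrho),p)$-connected to the target. Splitting $U$ into two suitable disjoint halves to play the roles of $L$ and $R$, one alternately invokes this variant to add a single vertex at a time. After $m-k$ such steps, the rolling end $k$-tuple is $(\tfrac16,p)$-connected to both halves of~$U$, hence to $U$.

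For the final $k$ bridge vertices I would strengthen the requirement $c_{m-k+j}\in W_j$ to $c_{m-k+j}\in W_k$, so that each such vertex is adjacent to every $y_l$ and the $y$-adjacencies become automatic. The task reduces to carrying out $k$ further Extension-type steps into the single smaller target $W_k$. Here the stronger hypothesis $(\eps,p,k-1,2k-1)$-pseudorandomness is crucial: the ``failure set'' in Lemma~\ref{lem:pseudorandom} has size at most $\eps p^{2k-1}n$, which is much smaller than $|W_k|\ge(\delta^2/2^k)p^kn$ once $\eps$ is small enough in terms of~$\delta$ and~$k$. This lets us rerun the proof of Lemma~\ref{lem:onestep} with $L$ replaced by~$W_k$ and obtain $k$ further extensions all of whose new vertices lie in~$W_k$, each maintaining $(\Omega(1),p)$-connectedness of the updated end $k$-tuple to~$W_k$.

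The main obstacle I expect is precisely this last transition from extensions into a linear-sized set to extensions into the much smaller target $W_k$ of size only $\Theta(p^kn)$. The subtlety is that $y_1,\ldots,y_k$ are specific, fixed vertices that could behave atypically with respect to arbitrary intermediate common neighbourhoods, so one cannot simply intersect candidate sets with $N(y_l)$ a posteriori. The remedy is to promote $W_k$ to the role of ``universe'' from the start of the endgame phase, converting the $y_l$'s from specific vertices whose individual degrees we would have to control into constraints baked into the target set itself, and then leaning on the finer scale of the $(\eps,p,k-1,2k-1)$ pseudorandomness assumption. Combining $k$ vertices from~$\vecx$, at most $5k$ bridge vertices, and $k$ vertices from~$\vecy$ yields the total length bound of~$7k$, as required.
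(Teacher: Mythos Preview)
Your plan has a genuine gap at the crucial transition into $W_k=N_U(\vecy)$. You assert that the $(\eps,p,k-1,2k-1)$ hypothesis lets you rerun the Extension lemma with $L$ replaced by $W_k$, because the failure set of size $\eps p^{2k-1}n$ is much smaller than $|W_k|\approx p^kn$. But that comparison is beside the point: to even begin the endgame phase you need the current end $k$-tuple $(z_1,\ldots,z_k)$ to be $(\varrho,p)$-connected to~$W_k$, and your phase-1 argument (connectedness to halves of $U$) does not provide this. Concretely, the very first endgame vertex must lie in $N_U(z_1,\ldots,z_k)\cap W_k$, an intersection of two sets each of size $\Theta(p^kn)$; the $(\eps,p,k-1,2k-1)$ condition says nothing about such a pair unless one of them has size at least $\eps p^{k-1}n$, which would force $p\ge c\eps$ and is not guaranteed. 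Trying instead to maintain connectedness to $W_k$ throughout phase~1 hits the same wall: the typicality targets $N_{W_k}(x_i,\ldots,x_k)$ have sizes down to $\Theta(p^{2k-1}n)$, and Lemma~\ref{lem:pseudorandom} applied with $X$ of that size yields a failure set of $\eps p^{k-1}n$ vertices, which swamps the $\Theta(p^{2k}n)$ candidates you are choosing from. (The same obstruction already appears for $k=2$: you would need two $\Theta(p^2n)$ sets to intersect under only $(\eps,p,1,2)$-pseudorandomness.)

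The paper avoids this by a genuinely two-sided meet-in-the-middle argument rather than a one-sided greedy push. From each of $\vecx$ and $\vecy$ it grows $k$-paths through nested sets $U_1,\ldots,U_{2k}$ and $U'_1,\ldots,U'_{2k}$, arranged so that the last $k$ sets on each side coincide and have \emph{linear} size. An inductive claim (Claim~\ref{clm:conn:indhyp}), proved via a clique-counting estimate (Proposition~\ref{prop:cliquecount}), shows that all but a small fraction of the crossing copies of $K_k$ in these linear-sized sets are ends of $k$-paths from $\vecx$, and symmetrically from $\vecy$; a pigeonhole then yields a common $K_k$ and hence the connection. In that induction the $(\eps,p,k-1,2k-1)$ hypothesis is used to control edges between a set of size $\Theta(p^{k-1}n)$ and one of size $\Theta(p^{2k-1}n)$, a regime where the pseudorandomness genuinely bites. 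Your one-sided scheme never reaches such a regime at the critical step.
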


The proof of Lemma~\ref{lem:connect} can be found in
Section~\ref{sec:connection}.  We remark that in the proof of
Theorem~\ref{thm:main} it is not especially important that the connecting
$k$-path guaranteed by this lemma is of constant length. However,
Lemma~\ref{lem:connect} is also used in the proof of
Lemma~\ref{lem:reslem}, and in this proof we need that the connecting
$k$-paths are of length independent of~$n$.

\subsection{Proof of Theorem~\ref{thm:main}}

Using Lemmas~\ref{lem:onestep},
\ref{lem:reslem}, \ref{lem:covlem} and~\ref{lem:connect} we can now prove our main theorem.

\begin{proof}
  Given $k\ge 2$ and $0<\beta< 1/2$, we set
  $\delta_{\sublem{lem:reslem}}:=\tfrac1{10}$,
  $\delta_{\sublem{lem:covlem}}:=\delta_{\sublem{lem:reslem}}^2/(10^4k)$,
  $\delta_{\sublem{lem:onestep}}:= \delta_{\sublem{lem:covlem}} /(200k)
  \le \delta^2_{\sublem{lem:reslem}} /(200k)$ and
  $\delta_{\sublem{lem:connect}}=\frac\beta{16}\cdot\delta_{\sublem{lem:reslem}}^2/(400k)$.
  We choose
  \begin{equation}\label{eq:main:eps}
    \eps\le \frac17 \cdot \frac{\beta\delta^2_{\sublem{lem:reslem}}}{6400 k^2 \cdot 2^k}
  \end{equation}
  to be small enough to apply Lemma~\ref{lem:onestep} with input $k$ and
  $\delta_{\sublem{lem:onestep}}$, to apply Lemma~\ref{lem:reslem} with
  input $k$, $\delta_{\sublem{lem:reslem}}$ and to apply Lemma~\ref{lem:covlem} $\beta$ and with input $k$,
  $\delta_{\sublem{lem:covlem}}$ and $\beta$, and to apply
  Lemma~\ref{lem:connect} with input $k$ and
  $\delta_{\sublem{lem:connect}}$.

  Let $0<p<1$ and $G$ be a graph on $n$ vertices with minimum degree at
  least $\beta p n$. If $k=2$, suppose that $G$ is
  $(\eps,p,1,2)$-pseudorandom. If $k\ge 3$, suppose that $G$ is
  $(\eps,p,k-1,2k-1)$ and $(\eps,p,k,k+1)$-pseudorandom. This ensures that
  we can apply Lemmas~\ref{lem:reslem}, \ref{lem:covlem}
  and~\ref{lem:connect}.

  Our first step now is to select an appropriate reservoir set.
  \begin{claim}
    \label{claim:reservoir}
    There is a set $R$, which we call reservoir set, such that  
    \begin{enumerate}[label=\rom]
    \item\label{itm:main:sizeR} $\delta^2_{\sublem{lem:reslem}} n/(200k)\le
      |R|\le \delta_{\sublem{lem:reslem}} n/(200k)$,
    \item\label{itm:main:degR} $\deg_R(v)\ge\frac1{2}\beta p|R|$ for all $v\in V(G)\setminus R$ and
    \item \label{itm:main:degV} $\deg_{V(G)\setminus R}(v)\ge \frac12\beta p n$ for all $v\in R$.
    \end{enumerate}
  \end{claim}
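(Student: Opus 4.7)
The plan is to pick~$R$ at random: include each vertex of~$G$ independently with probability $q := \delta^2_{\sublem{lem:reslem}}/(100k)$, so that $\mathbb{E}[|R|] = qn$ lies strictly between the two bounds in~\ref{itm:main:sizeR}. Conditions \ref{itm:main:sizeR}--\ref{itm:main:degV} will all follow from a Chernoff-plus-union-bound argument applied to this random set.

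The one ingredient needed to make Chernoff overwhelm an $n$-term union bound is a polynomial lower bound on $pn$, which is a free consequence of pseudorandomness: by Remark~\ref{rem:pseudo} we have $p^{\ell}n \ge \eps^{-1}$ for some $\ell \le 2k-1$, so $pn \ge \eps^{-1/\ell}\,n^{1-1/\ell} \gg \log n$ for~$n$ sufficiently large, and in particular $q\beta pn \gg \log n$. This is the only place where the specific pseudorandomness hypothesis is used.

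The three properties are now routine to verify. For~\ref{itm:main:sizeR}, Chernoff yields $|R| \in [0.9\,qn,\,1.1\,qn]$ with probability $1-o(1)$, and both endpoints lie inside the required interval. For~\ref{itm:main:degR}, fix $v \in V(G)$: since $\deg_R(v) \sim \mathrm{Bin}(\deg(v),q)$ has mean $q\deg(v) \ge q\beta pn$, a multiplicative Chernoff bound gives $\deg_R(v) \ge 0.9\,q\beta pn$ except on an event of probability $o(1/n)$, and combined with $|R| \le 1.1\,qn$ this yields $\deg_R(v) \ge \tfrac12\beta p|R|$. For~\ref{itm:main:degV}, since $q \le 1/8$, Chernoff gives $\deg_R(v) \le \deg(v)/4$ with probability $1-o(1/n)$, so $\deg_{V(G)\setminus R}(v) \ge \tfrac34\deg(v) \ge \tfrac34\beta pn > \tfrac12\beta pn$. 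A union bound over the $n$ vertices of~$G$ then produces an~$R$ satisfying all three properties simultaneously, completing the claim. The only genuine obstacle---that $pn$ might be too small for Chernoff to bite---has already been ruled out.
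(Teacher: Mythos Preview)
Your probabilistic argument is correct and is a natural way to produce the reservoir set. It is, however, genuinely different from the paper's proof. The paper constructs~$R$ deterministically: it starts with an arbitrary set~$R'$ of size $2\delta^2_{\sublem{lem:reslem}}n/(200k)$, deletes from~$R'$ the vertices with fewer than $3\beta pn/4$ neighbours outside~$R'$ to obtain~$R''$, and then adds back the vertices outside~$R''$ with fewer than $\beta p|R''|$ neighbours in~$R''$. The $(\eps,p,0,1)$-pseudorandomness (via Lemma~\ref{lem:pseudorandom}) bounds the number of vertices moved at each of the two steps by~$\eps pn$, and this is enough to verify all three properties directly.

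The two arguments use the hypotheses quite differently. You invoke pseudorandomness only once, to extract $pn\gg\log n$ so that Chernoff beats the union bound; the degree concentration itself comes purely from the minimum-degree hypothesis and independence. The paper instead uses pseudorandomness \emph{structurally}, to say that few vertices can have atypical degree into any fixed large set, and never touches probability at all. One practical consequence is that the paper's construction is explicitly algorithmic, matching the remark after Theorem~\ref{thm:main} that the whole proof yields a deterministic polynomial-time algorithm; your random~$R$ would require a derandomisation step for that claim. On the other hand, your route is shorter and avoids the two-round cleanup bookkeeping.

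One small point to tighten: the phrase ``for~$n$ sufficiently large'' hides a threshold depending on~$k$ and~$\beta$ that is not automatically guaranteed by the~$\eps$ already fixed in~\eqref{eq:main:eps}. You should note that~$\eps$ may be shrunk further (which is permitted, since we are choosing it) so that the bound $n>\eps^{-1}$ forced by Remark~\ref{rem:pseudo} already ensures $q\beta pn\ge C\log n$ for the constant~$C$ your Chernoff application needs.
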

  \begin{claimproof}
    We start with an arbitrary set $R'$ of
    $2\cdot\delta^2_{\sublem{lem:reslem}} n/(200k)$ vertices.  We remove from
    $R'$ all vertices $v\in R'$ such that $\deg_{V(G)\setminus R'}(v)<3\beta
    pn/4$ to obtain $R''$. We now let $R$ be obtained from $R''$ by adding
    all vertices $v$ of $V(G)\setminus R''$ such that $\deg_{R''}(v)<\beta p
    |R''|$.
  
    We now first show that~$R$ satisfies property~\ref{itm:main:sizeR}, by
    using that~$G$ is in particular $(\eps,p,0,1)$-pseudorandom. Since
    $|V(G)\setminus R'|>3n/4>\eps n$ and $3\beta
    pn/4\le(1-\eps)p(3n/4)<(1-\eps)p|V(G)\setminus R'|$ we infer from
    Lemma~\ref{lem:pseudorandom} that 
    \begin{equation}\label{eq:main:R'R''}
      |R'\setminus R''|<\eps pn\,.
    \end{equation}
    Thus, clearly $|R'\setminus R''|< \delta^2_{\sublem{lem:reslem}}
    n/(200k)$, and hence $|R|\ge|R''|>\delta^2_{\sublem{lem:reslem}}
    n/(200k)$.  Similarly $|R''|>\eps n$ and $\beta p|R''|<(1-\eps)p| R''|$
    in conjunction with Lemma~\ref{lem:pseudorandom} implies that
    \begin{equation}\label{eq:main:RR''}
      |R\setminus R''|<\eps pn\,,
    \end{equation}
    and so $|R\setminus R''|\le \delta^2_{\sublem{lem:reslem}} n/(200k)$
    and hence $|R|=|R''|+|R\setminus R''|\le|R'|+|R\setminus R''|\le \eps p n+2\cdot
    \delta^2_{\sublem{lem:reslem}} n/(200k)\le \delta_{\sublem{lem:reslem}}
    n/(200k)$.  This yields property~\ref{itm:main:sizeR}.
  
    For~\ref{itm:main:degR} observe that $|R\setminus R''|<\eps p n$ and
    $|R|\ge \delta^2_{\sublem{lem:reslem}} n/(200k)\ge 2\eps n$ implies
    $|R''|\ge\frac12|R|$. Since $R''\subset R$ we thus have by construction
    for each $v\in V(G)\setminus R$ that $\deg_{R}(v)\ge\beta p|R''|\ge\frac12\beta
    p|R|$.

    It remains to argue that~$R$ also satisfies~\ref{itm:main:degV}.  By
    construction all vertices of $R''$ have at least $3\beta p n/4$
    neighbours in $V(G)\setminus R'$, and thus by~\eqref{eq:main:R'R''} at
    least $3\beta p n/4-\eps p n>\beta pn/2$ neighbours in $V(G)\setminus
    R$. All vertices of $R\setminus R''$, on the other hand, have at most
    $\beta p |R''|\le\frac14\beta pn$ neighbours in $R''$, and
    by~\eqref{eq:main:RR''} at most $\eps p n \le\frac14\beta pn$
    neighbours in $R\setminus R''$. Since $\delta(G)\ge\beta p n$, we
    conclude that every vertex of $R$ has at least $\frac12\beta p n$
    neighbours in $V(G)\setminus R$.
  \end{claimproof}

  We now construct a reservoir path for this reservoir~$R$ by applying
  Lemma~\ref{lem:reslem} with input $k$, $\delta_{\sublem{lem:reslem}}$,
  $\beta$, $p$, $G$ and~$R$. Observe that this is possible by
  properties~\ref{itm:main:sizeR} and~\ref{itm:main:degV} of
  Claim~\ref{claim:reservoir}.  Hence we obtain a $k$-path $P$ in $G$ which
  satisfies all four conclusions of Lemma~\ref{lem:reslem}. Let $\vecu$ be
  the start $k$-tuple of $P$, and $\vecv$ the end $k$-tuple.  We conclude
  from~\ref{reslem:connR} and~\ref{reslem:connV} of
  Lemma~\ref{lem:reslem} that $\vecu$ and $\vecv$ are
  $(\tfrac12,p)$-connected to $R$ and $(\tfrac18,p)$-connected to
  $L_1:=V(G)\setminus V(P)$.

Our next step is to extend this reservoir path to an almost spanning
$k$-path~$P'$ by repeatedly applying Lemma~\ref{lem:onestep}.  For this
purpose we let $t:=|L_1|-\delta_{\sublem{lem:covlem}} n/(200k)$ 
and apply Lemma~\ref{lem:onestep} exactly~$t$ times with $k$,
$\delta_{\sublem{lem:onestep}}$ and $p$ to $G$.  First we apply this lemma
with sets $L_1$ and $R$, and the $k$-tuple $\vecv=:(v_1,\ldots,v_k)$.  We
obtain a vertex $v_{k+1}\in L_1\cap N(v_1,\dots,v_k)$ such that
$(v_2,\dots,v_{k+1})$ is $(\frac16,p)$-connected to both~$L_1$ and~$R$.
Let $L_2:=L_1\setminus\{v_{k+1}\}$, and extend $P$ by $v_{k+1}$ to obtain
$P_1:=(P,v_{k+1})$. Similarly for each $2\le i\le t$ in succession we apply
Lemma~\ref{lem:onestep} with $L_i$, $R$ and $(v_i,\ldots,v_{k+i-1})$ and
obtain from this lemma an extending vertex $v_{k+i}$ such that
$(v_{i+1},\dots,v_{k+i})$ is $(\frac16,p)$-connected to both~$L_i$ and~$R$.
We then let $L_{i+1}:=L_i\setminus\{v_{k+i}\}$ and
$P_i:=(P_{i-1},v_{k+i})$. We need to argue that these applications of
Lemma~\ref{lem:onestep} are possible.  Indeed, by
Claim~\ref{claim:reservoir}~\ref{itm:main:sizeR} and the choice of our
constants we have $|R|\ge \delta^2_{\sublem{lem:reslem}} n/(200k) \ge
\delta_{\sublem{lem:onestep}} n$ and
$|L_i|\ge|L_1|-t=\delta_{\sublem{lem:covlem}} n/(200k)=
\delta_{\sublem{lem:onestep}} n$.  Moreover, for $i>1$ the $k$-tuple
$(v_i,\ldots,v_{k+i-1})$ is $(\tfrac16,p)$-connected to both $L_{i-1}$ and
$R$ by construction. Since $|L_i|=|L_{i-1}|-1$, the $k$-tuple
$(v_i,\ldots,v_{k+i-1})$ is thus $(\tfrac18,p)$-connected to $L_i$ (for
$i=1$ the statement is guaranteed by Lemma~\ref{lem:reslem} which constructed
$P$).

What did we achieve so far? Let $P':=P_t$ and $L:=V(G)\setminus V(P')$ be
the set of leftover vertices at this point.  Then 
\begin{equation}\label{eq:main:cov1}
  |L|\le\delta_{\sublem{lem:covlem}} n/(200k)
\end{equation}
and by Claim~\ref{claim:reservoir}\ref{itm:main:degR} every vertex of $L$
has at least $\frac12\beta p |R|$ neighbours in $R$.  By construction $P'$
is a $k$-path extending the reservoir path $P$ and covering all vertices
of~$G$ but~$L$. In addition, the start $k$-tuple $\vecu$ and end $k$-tuple
$\vecv'$ of~$P'$ are both $(\tfrac18,p)$-connected to~$R$. Clearly this
implies that these $k$-tuples are also
$\big(\tfrac{\beta}{16},p\big)$-connected to~$R$, and in the following we
will only work with this weaker conclusion.

Our next step will be to cover the leftover vertices~$L$ with a $k$-path
$P''$ using the Covering lemma, Lemma~\ref{lem:covlem}. However, this needs
some preparation. Recall that in Lemma~\ref{lem:covlem} we can choose a
vertex set~$S$ so that the $k$-path that this lemma constructs only uses
vertices from~$L$ and~$S$. As explained earlier we want to choose a big
subset of the reservoir~$R$ as~$S$. However, we need to bear in mind that
we later want to connect the start $\vecu$ of~$P'$ and the end $\vecv''$
of~$P''$ using only vertices from $U:=R\setminus V(P'')$ with the help of
the Connection lemma, Lemma~\ref{lem:connect} (similarly for the
end~$\vecv'$ of~$P'$ and the start~$\vecu''$ of~$P''$). But this lemma
requires that $\vecu$ is well-connected to~$U$. In order to guarantee this
property we will now set aside a set $R_{\vecu}\subset R$ (and similarly a
set $R_{\vecv'}$) of vertices which witness the well-connectedness
of~$\vecu$ to~$R$ and prevent these vertices from being used in~$P'$ by
setting $S=R\setminus(R_{\vecu}\cup R_{\vecv'})$.  

More precisely, recall that the
$\big(\tfrac{\beta}{16},p\big)$-connectedness of~$\vecu$ means that there
is a set of $\tfrac{\beta}{16}(p/2)^k|R|$ common neighbours of~$\vecu$
in~$R$, a set of $\tfrac{\beta}{16}(p/2)^{k-1}|R|$ common neighbours of
$(u_2,\ldots,u_k)$ in~$R$, and so on. By Remark~\ref{rem:witness} there is
a set $R_{\vecu}$ of $\frac\beta{16}(p/2)|R|$ vertices of $R$ which witness
that $\vecu$ is $\big(\tfrac{\beta}{16},p\big)$-connected to
$R$. Similarly, there is a set $R_{\vecv'}$ of $\frac\beta{16}(p/2)|R|$
vertices of $R$ which witness that $\vecv'$ is
$\big(\tfrac{\beta}{16},p\big)$-connected to $R$.  Moreover, the deletion
of any set of at most $\tfrac{\beta}{32}(p/2)^k|R|$ vertices from
$R_{\vecu}$ (or $R_{\vecv'}$) results in a set that still witnesses that
$\vecu$ is $\big(\tfrac{\beta}{32},p\big)$-connected to $R$.

Now let $S:=R\setminus(R_{\vecu}\cup R_{\vecv'})$ and note that by
part~\ref{itm:main:sizeR} of Claim~\ref{claim:reservoir} we have that
\begin{equation}\label{eq:main:cov2}
  |S|\ge|R|-\frac\beta{16}p|R|
  \ge\frac12|R|
  \ge\frac{\delta_{\sublem{lem:reslem}}^2}{400k}n
  \ge \delta_{\sublem{lem:covlem}} n\,.
\end{equation}
Moreover, since every vertex of $L$
has at least $\frac12\beta p|R|$ neighbours in $R$ we conclude from
Claim~\ref{claim:reservoir}\ref{itm:main:sizeR} that
every vertex of $L$ also has at least
\begin{equation}\label{eq:main:cov3}
  \frac\beta{2}p|R|-\frac\beta{16}p|R|
  \ge \frac{7\beta}{16}p\frac{\delta_{\sublem{lem:reslem}}^2}{200k}n
  \ge \frac12\beta\delta_{\sublem{lem:covlem}} p n
\end{equation}
neighbours in~$S$. 

 It follows from~\eqref{eq:main:cov1}, \eqref{eq:main:cov2} and
 \eqref{eq:main:cov3} that we can apply Lemma~\ref{lem:covlem} with input
 $k$, $\delta_{\sublem{lem:covlem}}$, $\beta$, $p$, $G$, $L$ and~$S$.  We
 obtain a $k$-path $P''$ with 
\begin{equation*}
\big|V(P'')\big|\le 50k|L|
\leByRef{eq:main:cov1} \frac{\delta_{\sublem{lem:covlem}}n}{4}
<\frac{1}{8}\cdot\delta_{\sublem{lem:reslem}}^2\frac{n}{200k}
\le\frac18|R|\,,
\end{equation*}
which
 covers~$L$ and whose remaining vertices are in~$S$. The start and end
 tuples $\vecu''$ and $\vecv''$ of~$P''$ are $(\tfrac18,p)$-connected and
 hence $(2\delta_{\sublem{lem:connect}},p)$-connected to
 $S\setminus V(P'')$. It follows from the choice of $R_{\vecu}$ and
 $R_{\vecv'}$ that also~$\vecu$ and~$\vecv'$ are $(\tfrac\beta{16},p)$-connected
 and hence $(\delta_{\sublem{lem:connect}},p)$-connected to  $R\setminus V(P'')$.

 Now we would like to apply Lemma~\ref{lem:connect} twice to connect the
 ends of~$P'$ and~$P''$ such that the connections use vertices from
 $R\setminus V(P'')$. For this observe that $|R\setminus
 V(P'')|\ge\frac78|R|\ge\frac78\delta^2_{\sublem{lem:reslem}} n/(200k)
 \ge2\delta_{\sublem{lem:connect}}n$. Moreover, $\vecu$ and $\vecv''$ are
 both $(2\delta_{\sublem{lem:connect}},p)$-connected to $R\setminus V(P'')$.
 Hence we can apply Lemma~\ref{lem:connect} with~$k$ and
 $\delta_{\sublem{lem:connect}}$ to find a $k$-path $C$ of length at most $7k$ connecting 
$\vecu$ and $\vecv''$ in $R\setminus V(P'')$. 
By Remark~\ref{rem:pseudo} we have $\eps^{-1}<p^k n$ and hence we can use
Claim~\ref{claim:reservoir}\ref{itm:main:sizeR} to conclude that
\begin{equation*}
  |C|\le7k
  \leByRef{eq:main:eps}\frac1\eps\cdot\frac{\beta}{32}\cdot\frac1{2^k} \delta^2_{\sublem{lem:reslem}} \frac1{200k}
  \le\frac{\beta}{32}\Big(\frac{p}{2}\Big)^k \delta^2_{\sublem{lem:reslem}} \frac{n}{200k}
  \le\frac{\beta}{32}\Big(\frac{p}{2}\Big)^k|R| \,.
\end{equation*}
It follows that $R_{\vecu}\setminus C$ and $R_{\vecv'}\setminus C$ still
witness that~$\vecu$ and~$\vecv'$ are $(\tfrac\beta{32},p)$-connected and
hence $(\delta_{\sublem{lem:connect}},p)$-connected to $R\setminus
\big(V(P'')\cup C\big)$. Hence we can apply Lemma~\ref{lem:connect} again
to find~$C'$ connecting
 $\vecu''$ and $\vecv'$ in $R\setminus \big(V(P'')\cup C\big)$.

Finally, the graph obtained by concatenating $P',C',P'',C$ certainly covers $V(G)$,
and is almost a Hamilton $k$-cycle except that some vertices in $R$ are used both in $P'$
and elsewhere. But now we can appeal to the reservoir
property~\ref{reslem:W} of the reservoir path~$P$ contained in~$P'$ to obtain a
$k$-path $P^*$ whose start and end tuples are those of $P'$, and which uses
exactly the vertices of $P'$ not in $R\cap\big(C\cup C'\cup P''\big)$. The
object obtained by concatenating $P^*,C',P'',C$ then is the desired Hamilton
$k$-cycle, and the proof is complete.
\end{proof}

\section{Proof of  Lemma~\ref{lem:connect}}\label{sec:connection}
In this section we prove the Connection lemma, Lemma~\ref{lem:connect}. We treat
the cases $k=2$ and $k\ge 3$ separately, and will first prove the case $k=2$.

\subsection{The Connection lemma for \texorpdfstring{$k=2$}{k=2}}
The idea of the proof is as follows. We want to connect two pairs
$(x_1,x_2)$ and $(y_1,y_2)=:(x'_2,x'_1)$ which are $(\delta,p)$-connected
to a large set~$U$ of vertices, i.e.\ $|N(x_2)\cap U|,|N(x_2')\cap
U|\ge\delta \tfrac{p}{2}|U|$, and $|N(x_1,x_2)\cap U|,|N(x_1',x_2')\cap
U|\ge\delta \big(\tfrac{p}{2}\big)^2|U|$.  For this we identify disjoint
sets $X_3,\ldots,X_7$ in $U$ and create many $2$-paths
$(x_1,x_2,\dots,x_7)$ with $x_i\in X_i$ for $3\le i\le 7$ as follows.  We
let $X_3$ consist of $\Omega(p^2n)$ vertices in $N(x_1,x_2)$, $X_4$ of
$\Omega(pn)$ vertices in $N(x_2)$, and~$X_5$, $X_6$ and~$X_7$ of
$\Omega(n)$ vertices. Now any vertex $x_3\in X_3$ has the property that
$(x_1,x_2,x_3)$ is a $2$-path, and most of these vertices have about the
expected number of neighbours in~$X_4$. Our pseudorandomness condition then
implies that we can find $\Omega(pn)$ vertices $x_4\in X_4$ such that
$(x_1,x_2,x_3,x_4)$ is a $2$-path.  Similarly, $\Omega(n)$ vertices of
$X_5$ are the end vertex of a $2$-path from $(x_1,x_2)$ through $X_3$ and
$X_4$, and extending these paths further to~$X_6$ we obtain that most
vertices of $X_6$ are ends of $2$-paths from $(x_1,x_2)$.

Analogously we construct sets $X'_3,\dots,X'_7$ and $2$-paths through these
sets extending $(x_1',x_2')$. It remains to connect one of the $2$-paths
extending $(x_1,x_2)$ and one extending $(x'_1,x'_2)$. It seems plausible
that this should be possible because we have so many candidates for these
$2$-paths. However, so far we only know that most \emph{vertices} in $X_7$
are ends of $2$-paths from $(x_1,x_2)$. But in order to connect two $2$-paths
information merely about the final vertex of each of the paths is not
enough, but we need information about the last \emph{edge} of the paths. 
To this end we actually prove the  following stronger property
for~$X_6$. We can find a subset $Y_6$ of $\Omega(pn)$ vertices $x_6$ in
$X_6$ with the following property. There are $\Omega(p^2n)$ vertices $x_5$ of
$X_5$ such that $(x_5,x_6)$ is the end of a $2$-path from $(x_1,x_2)$ -- we call such edges
$x_5x_6$ \emph{good}.  Similarly we find $Y'_6\subset X'_6$.

This stronger property then enables us to show that \emph{almost all edges}
from $Y_6$ to $X_7$ are ends of $2$-paths from $(x_1,x_2)$ and almost all
edges from $Y'_6$ to $X'_7$ are ends of $2$-paths from $(x'_1,x'_2)$.
Since $Y_6$ and $Y'_6$ are still only of size $\Omega(pn)$, we repeat this
argument and obtain similar sets $Y_7\subset X_7$ and $Y'_7\subset X'_7$ of
size $\Omega(n)$, such that most edges from $Y_7$ to $X'_7$ are ends of
$2$-paths from $(x_1,x_2)$ and most edges from $Y'_7$ to $X_7$ are ends of
$2$-paths from $(x_1',x_2')$.  Since $Y_7$ and $Y'_7$ are both large, we
can then use the pigeonhole principle to find an edge between $Y_7$ and
$Y'_7$ which is the end of a $2$-path both from $(x_1,x_2)$ and (in the
reverse direction) from $(x'_1,x'_2)$, and hence we find the desired $2$-path
connecting $(x_1,x_2)$ and $(x_2',x_1')$.

\begin{proof}[Proof of Lemma~\ref{lem:connect} for $k=2$]
  Given $\delta>0$, we set $\eps=\delta^2/10^6$.  Assume that~$G$ is
  $(\eps,p,1,2)$-pseudorandom and $|U|\ge\delta n$.  By
  Remark~\ref{rem:pseudo} this implies $p^2|U|\ge10^6\delta^{-1}$.  Let
  $\vecx$ and $\vecy$ be $(\delta,p)$-connected to~$U$.  Our goal is to find
  a connection between $\vecx=(x_1,x_2)$ and $\vecy=:(x'_2,x'_1)$.

  We first identify ten disjoint sets in $U$ in which we will find our ten
  connecting vertices. We first choose $X_3\subset \big(N(x_1,x_2)\cap
  U\big)\setminus\{x_1,x_2,x_1',x_2'\}$ and $X'_3\subset
  \big(N(x_1',x_2')\cap U\big)\setminus\big(\{x_1,x_2,x_1',x_2'\}\cup
  X_3\big)$, then $X_4\subset \big(N(x_2)\cap U\big)\setminus
  \big(\{x_1,x_2,x_1',x_2'\}\cup X_3\cup X'_3\big)$ and $X'_4\subset
  \big(N(x_2')\cap U\big)\setminus \big(\{x_1,x_2,x_1',x_2'\}\cup X_3\cup
  X'_3\cup X_4\big)$, and then pairwise disjoint subsets
  $X_5,X_6,X_7,X'_5,X'_6,X'_7$ of $U\setminus\big(\{x_1,x_2,x_1',x_2'\}\cup
  X_3\cup X'_3\cup X_4\cup X'_4\big)$, such that
  \begin{align*}
    |X_3|,|X'_3| &= \tfrac1{16}\delta p^2|U| \,, \qquad\qquad\\
    |X_4|,|X'_4| &= \tfrac1{16}\delta p|U| \,, \\
    |X_5|,|X'_5|, |X_6|,|X'_6|, |X_7|,|X'_7| &= \tfrac1{10}|U| \,.
  \end{align*}
  Here, the choice of $|X_3|$ (and similarly $|X'_3|$) is possible because
  $(x_1,x_2)$ is $(\delta,p)$-connected to~$U$ and so $|N(x_1,x_2)\cap
  U|\ge \delta p^2|U|/4$. The choice of $|X_4|$ (and
  similarly $|X'_4|$) is possible because $|N(x_1)\cap U|\ge\delta p|U|/2$
  and $X_3$, $X'_3$ are small. The choice of the remaining sets is possible
  because all previously chosen sets are small. Note that since $p^2|U|$ is
  large, so all of these sets are large and rounding errors do not affect the
  validity of this argument.
  
  By construction all vertices of~$X_3$ form a $2$-path with
  $(x_1,x_2)$. We shall now extend these $2$-paths to $X_4$, $X_5$, and so
  on.  For this let 
  \begin{equation*}
    Y_3:=\big\{y\in X_3\colon
      \deg_{X_4}(y)\ge\delta p^2|U|/20 \,,\,\,
      \deg_{X_5}(y)\ge p|U|/20
    \big\}
  \end{equation*}
  That is, the vertices in $y_3\in Y_3$ have many
  $2$-path extensions $(x_1,x_2,y_3,x_4)$ into~$X_4$ and they are good
  candidates for having many $2$-paths which extend even further to~$X_5$.
  Since $|X_4|\ge\delta p|U|/16>\eps p n$ and $\delta
  p^2|U|/20<(1-\eps)p|X_4|$ we can use Lemma~\ref{lem:pseudorandom} to
  infer that at most $\eps p^2 n$ vertices of~$X_3$ fail the first of these
  two conditions because~$G$ is $(\eps,p,1,2)$-pseudorandom. Similarly, at
  most $\eps p^2 n$ vertices fail the second condition, and hence
  \begin{equation*} % \label{eq:con:Y3}
    |Y_3|\ge |X_3|-2\eps p^2n
    \ge \tfrac1{16}\delta p^2 |U|-2\eps p^2 \delta^{-1}|U|
    \ge \delta p^2|U|/20\,.
  \end{equation*}
  Next, for each $y_3\in Y_3$, we let 
  \begin{equation*}
    Y_4(y_3) :=\big\{y\in N_{X_4}(y_3) \colon
      \deg_{X_5}(y,y_3)\ge p^2|U|/40 \,,\,\,
      \deg_{X_6}(y)\ge p|U|/20
    \big\} \,.
  \end{equation*}
  Observe that for each vertex $y_4\in Y_4(y_3)$ we have that
  $(x_1,x_2,y_3,y_4)$ is a $2$-path and this $2$-path is a good candidate
  for having many extensions to $X_5$ and $X_6$. Again, since $|X_5\cap
  N(y_3)|\ge \delta p|U|/20>\eps p n$ by the definition of~$Y_3$ and
  $p^2|U|/40<(1-\eps)p|X_5\cap N(y_3)|$ we can use
  Lemma~\ref{lem:pseudorandom} to infer that at most $\eps p^2n$ vertices
  fail the first condition, and similarly for the second condition. So
  \begin{equation*}% \label{eq:con:Y4}
    |Y_4(y_3)|\ge |X_4\cap N(y_3)|-2\eps p^2n
    \ge\delta p^2|U|/20-2\eps p^2n\ge \delta p^2|U|/40\,.
  \end{equation*}
  Analogously, for each $y_3\in Y_3$ and $y_4\in Y_4(y_3)$, we let
  \begin{equation*}
    Y_5(y_3,y_4) :=\big\{y\in N_{X_5}(y_3,y_4) \colon
      \deg_{X_6}(y,y_4)\ge p^2|U|/40 \,,\,
      \deg_{X_7}(y)\ge p|U|/20
    \big\} \,.
  \end{equation*}
  Similarly as before we have for each $y_5\in Y_5(y_3,y_4)$ that
  $(x_1,x_2,y_3,y_4,y_5)$ is a $2$-path and Lemma~\ref{lem:pseudorandom}
  implies $|Y_5(y_3,y_4)|\ge p^2|U|/40-2\eps p^2n\ge p^2|U|/80$.

  For $y_3\in Y_3$ we let
  $Y_5(y_3):=\bigcup_{y_4\in Y_4(y_3)}Y_5(y_3,y_4)$, and set
  $Y_5:=\bigcup_{y_3\in Y_3}Y_5(y_3)$
  and claim that 
  \begin{equation}\label{eq:con:Y5}
    |Y_5(y_3)|\ge p|U|/160 \qquad\text{and}\qquad |Y_5|\ge |U|/200\,.
  \end{equation}
  Indeed, for the first part let $y_3\in Y_3$ be fixed, assume otherwise
  and consider the set $(N(y_3)\cap X_5)\setminus Y_5(y_3)$, which has
  cardinality at least $p|U|/20-p|U|/160=7p|U|/160$. Since $|N(y_3)\cap
  X_4|\ge \delta p^2|U|/20$ by definition, we can thus use
  Lemma~\ref{lem:pseudorandom} to pick a vertex $y_4\in X_4\cap N(y_3)$
  which is ``typical'' with respect to $N(y_3)\cap X_5$ and with respect
  to~$X_6$, that is, which satisfies $|N_{X_5}(y_3,y_4)\setminus
  Y_5(y_3)|\ge p^2|U|/40$ and $\deg_{X_6}(y_4)\ge p|U|/20$. Hence, in
  particular, $y_4\in Y_4(y_3)$.  We now show that
  $N_{X_5}(y_3,y_4)\setminus Y_5(y_3)$, since it is big, contains a vertex
  from $Y_5(y_3,y_4)\subset Y_5(y_3)$, a contradiction. For this we need to
  show that there is $y_5\in N_{X_5}(y_3,y_4)\setminus Y_5(y_3)$ with
  $\deg_{X_6\cap N(y_4)}(y_5)\ge p^2|U|/40$ and $\deg_{X_7}(y_5)\ge
  p|U|/20$. But such a vertex exists by Lemma~\ref{lem:pseudorandom}
  because $|X_6\cap N(y_4)|\ge p|U|/20$ by the definition of $y_4$.
  For the second part note that each $y_3\in Y_3$ has at
  least $|Y_5(y_3)|\ge p|U|/160$ neighbours in $Y_5$, and thus we have
  $e(Y_3,Y_5)\ge |Y_3|p|U|/160$. By~\eqref{eq:pseudodisc}, we have
  $e(Y_3,Y_5)\le(1+\eps)p|Y_3|\cdot|Y_5|$, and thus $|Y_5|\ge
  |U|/200$. Hence we have~\eqref{eq:con:Y5}.
  
  We next define good edges between $X_5$ and $X_6$. Let $y_5\in Y_5$.  For
  a neighbour $x_6\in X_6$ of $y_5$, we call the edge $y_5x_6$ \emph{good}
  if $x_1x_2x_3x_4y_5x_6$ is a $2$-path for some $x_3\in X_3$ and~$x_4\in
  X_4$. For each $y_5$ in $Y_5$ there are $y_3\in Y_3$ and $y_4\in Y_4$
  such that $y_5\in Y_5(y_3,y_4)$, which means $|N(y_4,y_5)\cap X_6|\ge
  p^2|U|/40$ by definition.  So each vertex in $Y_5$ sends at least
  $p^2|U|/40$ good edges to $X_6$.  Hence the average number of good edges
  incident to a vertex of~$X_6$ is at least $|Y_5|p^2|U|/(40|X_6|)\ge
  p^2|U|/800$, where we used $|X_6|=|U|/10$ and~\eqref{eq:con:Y5}.  Let
  $Z_6$ be the set of those vertices in $X_6$ which are incident to at
  least $p^2|U|/1000$ good edges from $Y_5$. We will show that
  \begin{equation}\label{eq:con:Z6}
    |Z_6|\ge p|U|/300
  \end{equation}
  by using a double counting argument. Indeed, the total number
  $\Good(Y_5,X_6)$ of good edges from $Y_5$ to $X_6$ is at least
  $|Y_5|p^2|U|/40$.  
  %By~\eqref{eq:pseudodisc} we have $e(Y_5,Z_6)\le(1+\eps)p|Y_5||Z_6|$. 
  By definition of~$Z_6$ each vertex in
  $X_6\setminus Z_6$ is incident to less than
  $p^2|U|/1000$ good edges.  Thus
  \begin{equation*}
    |Y_5|p^2|U|/40
    \le\Good(Y_5,X_6)
    \le e(Y_5,Z_6)%(1+\eps)p|Y_5|\cdot |Z_6|
    +|X_6|p^2|U|/1000\,.
  \end{equation*}
  Now~\eqref{eq:con:Y5} implies that the second summand can be bounded by
  $|X_6|p^2|U|/1000=\frac1{200}|U|\cdot\frac1{50}p^2|U|\le |Y_5|p^2|U|/50$.
  Hence $|Y_5|p^2|U|/200\le e(Y_5,Z_6)\le |Y_5||Z_6|$ implying $|Z_6|\ge p^2|U|/200>\eps p^2 n$. 
  This allows us to immediately obtain the desired bound~\eqref{eq:con:Z6} since we can now estimate 
  $e(Y_5,Z_6)\le (1+\eps)p|Y_5||Z_6|$ using~\eqref{eq:pseudodisc}, improving thus the lower bound
   on $|Z_6|$ by a factor of $p/(1+\eps)$.
%     and so~\eqref{eq:con:Z6} holds.

  We now let $Y_6\subset Z_6$ be the set of
  those vertices with at least $p|U|/20$ neighbours in $X_7$.
  That is, $Y_6$ is the set of those vertices in $X_6$ which receive many
  good edges from $Y_5$ and have many neighbours in $X_7$. These are the
  vertices that we will continue to work with in the following.
  Lemma~\ref{lem:pseudorandom} gives a lower bound
  \begin{equation}\label{eq:con:Y6}
    |Y_6|\ge|Z_6|-\eps p^2 n
    \geByRef{eq:con:Z6} p|U|/300-\eps p^2 n\ge p|U|/400\,,
  \end{equation}
  for the number of vertices in this set. However, this lower bound is only
  of order $\Omega(pn)$. Hence we iterate and define good edges between
  $Y_6$ and $X_7$ to obtain a linear sized set $Y_7$ with similar
  properties.
  
  Given an edge $y_6x_7$ from $Y_6$ to $X_7$, we call $y_6x_7$ \emph{good}
  if there is $y_5\in Y_5$ such that $y_5y_6$ is a good edge and $y_5$ is
  adjacent to $x_7$. By definition of $Y_6$, for $y_6\in Y_6$ there are at
  least $p^2|U|/1000>\eps p^2 n$ vertices of $Y_5$ which send good edges to
  $y_6$. It follows by~\eqref{eq:pseudodisc} that at most $\eps p n$ edges
  from $y_6$ to $X_7$ are not good, for each $y_6\in Y_6$.  Since vertices
  in~$Y_6$ have at least $p|U|/20$ neighbours in~$X_7$ we thus conclude
  that there are at least $|Y_6|(p|U|/20-\eps pn)\ge |Y_6|p|U|/40$ good
  edges from $Y_6$ to $X_7$.
  Let $Y_7\subset X_7$ be the set of those vertices which are incident to
  at least $p^2|U|/5000$ good edges (again, a bit less than the average,
  which is at least $|Y_6|p|U|/(40|X_7|)\ge p|U|/4000$). Applying a
  similar double counting argument as before, using~\eqref{eq:pseudodisc}
  and~\eqref{eq:con:Y6}, we obtain
  \begin{equation}\label{eq:con:Y7}
    |Y_7|\ge |U|/100\,.
  \end{equation}
  
  Let us examine the good edges leaving $Y_7$: We call an edge from
  $y_7\in Y_7$ to $x'_7\in X'_7$ \emph{good} if there is $y_6\in Y_6$ such
  that $y_6y_7$ is a good edge and $y_6$ is adjacent to $x'_7$. By
  definition of $Y_7$, for each $y_7\in Y_7$ there are at least
  $p^2|U|/5000$ good edges from $Y_6$ to $y_7$, and thus
  by~\eqref{eq:pseudodisc} there are at most $\eps p n$ edges from $y_7$ to
  $X'_7$ which are not good. Observe that by definition any good edge
  $y_7x'_7$ from $Y_7$ to $X'_7$ is the last edge in a $2$-path from
  $x_1x_2$ to $y_7x'_7$ using one vertex of each set $X_3,\ldots,X_6$.
  
  Now we repeat the identical construction within the sets $X'_3,\ldots,X'_7$,
  obtaining a set $Y'_7\subset X'_7$ of size at least $|U|/100$, where each
  vertex $y'_7\in Y'_7$ sends at most $\eps p n$ edges to $X_7$ which are
  not good, and each
  good edge from $y'_7$ to $X_7$ is the last edge in a $2$-path from $x_1'x_2'$
  using one vertex of each set $X'_3,\ldots,X'_6$. 

  Finally we can apply the pigeon hole principle:
  By~\eqref{eq:pseudodisc} there are at least
  \[(1-\eps)p|Y_7||Y'_7|\geByRef{eq:con:Y7}
  p|U|^2/20000>\eps p n^2>\big(|Y_7|+|Y'_7|\big)\eps p n\] edges between
  $Y_7$ and $Y'_7$, and in particular there is one edge $y_7y'_7$ which is
  both good from $Y_7$ to $Y'_7$ and good from $Y'_7$ to $Y_7$. This yields a
  $2$-path from $x_1x_2$ to $x_2'x_1'$ using one vertex of each set
  $X_3,\ldots,X_6$, $y_7$, $y'_7$, and one vertex of each set
  $X'_6,\ldots,X'_3$, as desired.
\end{proof}

\subsection{The Connection lemma for  \texorpdfstring{$k>2$}{k>2}}
We use the same general strategy as in the $k=2$ case. To connect the
$k$-tuples $\vecx$ and $\vecy$ we start by
constructing short $k$-paths from $\vecx$ step by step. In each step we
look for many possible extensions of each of the $k$-paths constructed so
far (so in step~$i$ all our $k$-paths will be of length~$i$). Our goal is
to continue until we reach a collection of
$k$ disjoint $\Omega(n)$-sized vertex subsets of $U$ such that\\
($\star$) most copies
of $K_k$ with one vertex in each of the $k$ sets are ends of $k$-paths leaving
$\vecx$.\\
Repeating from $\vecy$, the pigeonhole
argument then guarantees that one of these copies of $K_k$ is also the end of a
$k$-path leaving $\vecy$ in the reverse order, and thus we get the desired
$\vecx$-$\vecy$ connection. 

However, obtaining property ($\star$) is not straightforward. 
In fact $(\eps,p,k-1,k)$-pseudorandomness,  a weaker pseudorandomness condition than we require, would be enough
to guarantee that after $k+1$ steps we get $k$-paths from $\vecx$
to a set of $\Omega(n)$ vertices. 
Thus after  $k-1$ further steps we get $k$ disjoint $\Omega(n)$-sized
subsets of~$U$ of vertices which are the ends of $k$-paths from~$\vecx$,
and we might hope that these sets also satisfy property ($\star$).
However we are not
able to show this with this weaker pseudorandomness condition.
% Neither were we able to generalise the trick used in the $k=2$ case. 

Hence we resort to demanding $(\eps,p,k-1,2k-1)$-pseudorandomness. This
allows us to show an inductive version of ($\star$): at each step we maintain the
property that most copies of $K_k$ in the final $k$ sets are ends of
$k$-paths from $\vecx$.

\smallskip

The inductive argument as well as the pigeonhole argument in
this proof rely on the following proposition, which states that in a
sufficiently pseudorandom graph every collection of~$k$ sufficiently large
disjoint vertex sets span roughly the expected number of $k$-cliques.
We use the following definitions. For a graph $G$ and disjoint
subsets $V_1,\ldots,V_k$ of the vertex set $V(G)$ we denote by
$K_k(V_1,\ldots,V_k)$ the set of all copies of $K_k$ crossing
$V_1,\ldots,V_k$, i.e., with one vertex in each of the sets
$V_1,\ldots,V_k$. Given $p\in[0,1]$, we define
\[\EK_k(V_1,\ldots,V_k):=p^{\binom{k}{2}}\prod_{i=1}^k|V_i|\,,\]
which we call the \emph{expected number of $k$-cliques crossing
  $V_1,\ldots,V_k$}.

\begin{proposition}\label{prop:cliquecount}
  For each $\mu>0$ and integer $k\ge 1$ there exists $\eps>0$ such that for
  all $p\in(0,1)$ the following holds. Suppose that $k\ge r\ge 2$ is an
  integer, and that $V_1,\ldots,V_r$ are pairwise disjoint vertex sets in
  an $(\eps,p,k-1,2k-2)$-pseudorandom graph $G$ on $n$ vertices such that
  $|V_i|\ge \mu p^{k-i}n$ for each $r\ge i\ge 1$. Then we have
\[\big|K_r(V_1,\ldots,V_r)\big|=(1\pm\mu)\EK_r(V_1,\ldots,V_r)\,.\]
\end{proposition}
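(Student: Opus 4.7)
The plan is to proceed by induction on $r$ for each fixed $k$, allowing a constant-factor degradation of the error parameter at each step. For the base case $r=2$, the sizes $|V_1|\ge\mu p^{k-1}n$ and $|V_2|\ge\mu p^{k-2}n$ exceed the thresholds $\eps p^{k-1}n$ and $\eps p^{2k-2}n$ whenever $\eps\le\mu$ and $p<1$, so $(\eps,p,k-1,2k-2)$-pseudorandomness directly gives $|K_2(V_1,V_2)|=e(V_1,V_2)=(1\pm\eps)p|V_1||V_2|=(1\pm\eps)\EK_2(V_1,V_2)$.

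For the inductive step from $r-1$ to $r$, decompose
\[
 |K_r(V_1,\dots,V_r)|=\sum_{v\in V_1}\bigl|K_{r-1}\bigl(N(v)\cap V_2,\dots,N(v)\cap V_r\bigr)\bigr|
\]
and call $v\in V_1$ \emph{typical} if $|N(v)\cap V_j|=(1\pm\eps)p|V_j|$ for every $j=2,\dots,r$. Since each $|V_j|\ge\mu p^{k-1}n\ge\eps p^{k-1}n$, Lemma~\ref{lem:pseudorandom} applied $r-1$ times bounds the number of atypical $v\in V_1$ by $2(r-1)\eps p^{2k-2}n$. For typical $v$, the sets $W_j:=N(v)\cap V_{j+1}$ satisfy $|W_j|\ge(1-\eps)\mu p^{k-j}n$, so the inductive hypothesis (applied with parameter $\mu/3$ and $\eps$ chosen small enough) yields $|K_{r-1}(W_1,\dots,W_{r-1})|=(1\pm\mu/3)p^{\binom{r-1}{2}}\prod_j|W_j|$. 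Combined with $\prod_j|W_j|=(1\pm\eps)^{r-1}p^{r-1}\prod_{i=2}^r|V_i|$ and $\binom{r-1}{2}+r-1=\binom{r}{2}$, the typical $v$'s contribute $(1\pm O(\mu+\eps))\EK_r(V_1,\dots,V_r)$ in total.

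For each atypical $v$, bound the contribution trivially by $|K_{r-1}(V_2,\dots,V_r)|\le(1+\mu/3)p^{\binom{r-1}{2}}\prod_{i=2}^r|V_i|$, again via the inductive hypothesis (applied to $V_2,\dots,V_r$, valid since $|V_{j+1}|\ge\mu p^{k-j-1}n\ge\mu p^{k-j}n$). Relative to $\EK_r$, the total atypical contribution is at most
\[
 \frac{2(r-1)\eps(1+\mu/3)\,p^{2k-2}n}{p^{r-1}|V_1|}\;\le\;\frac{2(r-1)\eps(1+\mu/3)}{\mu}\,p^{k-r}\;\le\;\frac{4(k-1)\eps}{\mu},
\]
using $|V_1|\ge\mu p^{k-1}n$ and $k\ge r$, so choosing $\eps$ small in terms of $\mu$ and $k$ closes the induction with $|K_r|=(1\pm\mu)\EK_r$.

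The main obstacle, and the reason the second pseudorandomness parameter must be $2k-2$ rather than just $k$, is controlling the atypical contribution: an atypical $v$ can contribute up to a factor $p^{-(r-1)}$ more than a typical one, so one needs the atypical count to be of order $\eps p^{2k-2}n$; the exponent $2k-2$ is precisely what absorbs the factor $p^{r-1}$ in the ratio and the factor $p^{k-1}$ lost to $|V_1|$, leaving the harmless residue $p^{k-r}\le 1$.
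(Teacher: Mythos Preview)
Your proof is correct and follows essentially the same approach as the paper: induct on $r$, split $V_1$ into typical and atypical vertices, apply the inductive hypothesis to the neighbourhoods $N(v)\cap V_j$ for typical $v$, and bound each atypical $v$'s contribution trivially by $|K_{r-1}(V_2,\ldots,V_r)|$ using the inductive hypothesis once more; your computation of the atypical-to-expected ratio is exactly the paper's. The only formal difference is bookkeeping of constants: you invoke the $(r-1)$-case with parameter $\mu/3$, which implicitly requires $\eps$ to have been chosen small enough for a cascade of shrinking parameters down to $r=2$, whereas the paper avoids this by proving the strengthened hypothesis $|K_r|=(1\pm 2k\eps/\mu)^{\binom{r+1}{2}}\EK_r$ under the relaxed size condition $|V_i|\ge 2^{r-k}\mu p^{k-i}n$, so that a single $\eps$ works uniformly through the induction.
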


We remark that the lower bound in this proposition requires only
$(\eps,p,k-2,k-1)$-pseudorandomness and that also the pseudorandomness
requirement for the upper bound can undoubtedly be improved.

\begin{proof}[Proof of Proposition~\ref{prop:cliquecount}]
  Given $0<\mu\le 1$, we take $0<\eps_0<2^{-k}\mu$ small enough so that
  $(1\pm2k\eps_0/\mu)^{\binom{k+1}{2}}$ is a sub-range of $1\pm\mu$. Given
  $0<\eps<\eps_0$, we will prove by induction on $r$ the stronger statement
  \[\big|K_r(V_1,\ldots,V_r)\big|=(1\pm\tfrac{2k\eps}\mu)^{\binom{r+1}{2}}\EK_r(V_1,\ldots,V_r)\]
  for disjoint sets $V_1,\ldots,V_r$ in an $(\eps,p,k-1,2k-2)$-pseudorandom graph
  $G$ with $|V_i|\ge2^{r-k}\mu p^{k-i}n$ for each $i$. The base case $r=2$ is
  immediate from $(\eps,p,k-2,k-1)$-pseudorandomness.
  
  For the induction step, we split the vertices of $V_1$ into two classes: the
  typical vertices, whose degree into $V_i$ is $(1\pm\eps)p|V_i|$ for each $2\le
  i\le r$, and the remaining atypical vertices. Since $(1-\eps)p|V_i|\ge
  p|V_i|/2\ge 2^{r-1-k}\mu p^{k-i+1}n$, for each typical vertex $v$ we have by
  induction the estimate 
  \begin{equation*}\begin{split}
    \big|K_{r-1}\big(N_{V_2}(v),\ldots,N_{V_r}(v)\big)\big|
    & =(1\pm2k\eps/\mu)^{\binom{r}{2}}
          \big|\EK_r\big(N_{V_2}(v),\ldots,N_{V_r}(v)\big)\big| \\
    & =(1\pm2k\eps/\mu)^{\binom{r}{2}}p^{\binom{r-1}{2}}|N_{V_2}(v)|\dots|N_{V_r}(v)| \\
    & =(1\pm2k\eps/\mu)^{\binom{r}{2}}p^{\binom{r}{2}}(1\pm\eps)^{r-1}|V_2|\cdots|V_r|\,,
  \end{split}\end{equation*}
  which is the contribution of $v$ to $\big|K_r(V_1,\ldots,V_r)\big|$. 
  By Lemma~\ref{lem:pseudorandom} 
  all but at most $(r-1)\eps p^{2k-2}n\le 2k\eps p^{k-1}\mu^{-1}|V_1|$
  vertices of $V_1$ are typical. This clearly already yields the lower
  bound of our proposition. 

  To obtain the upper bound, it is then enough to show that the atypical
  vertices do not contribute too much. An atypical vertex certainly does
  not contribute more than $\big|K_{r-1}(V_2,\ldots,V_r)\big|$, which by
  induction is not more than
  \[(1+\tfrac{2k\eps}\mu)^{\binom{r}{2}}p^{\binom{r-1}{2}}|V_2|\cdots|V_r|\,.\] 
  Hence we get
  \begin{equation*}\begin{split}
    \big|K_r(V_1,\ldots,V_r)\big|
    &\le \big(1+\tfrac{2k\eps}{\mu}\big)^{\binom{r}{2}}p^{\binom{r}{2}}(1+\eps)^{r-1}|V_1|\cdots|V_r|\\
    &\qquad\quad +\tfrac{2k\eps}{\mu}p^{k-1}|V_1|\big(1+\tfrac{2k\eps}{\mu}\big)^{\binom{r}{2}}
                     p^{\binom{r-1}{2}}|V_2|\cdots|V_r|\\
    &\le \Big(\big(1+\tfrac{2k\eps}{\mu}\big)^{\binom{r}{2}+r-1}
                   + \tfrac{2k\eps}{\mu}\big(1+\tfrac{2k\eps}{\mu}\big)^{\binom{r}{2}}\Big)
            p^{\binom{r}{2}}|V_1|\cdots|V_r|\\
    &\le\big(1+\tfrac{2k\eps}{\mu}\big)^{\binom{r+1}{2}}p^{\binom{r}{2}}|V_1|\cdots|V_r|
  \end{split}\end{equation*}
  as desired.
\end{proof}

We now give the proof of the Connection lemma in the case $k>2$, modulo
a claim which encapsulates the inductive argument, whose proof we will
provide subsequently.

\begin{proof}[Proof of Lemma~\ref{lem:connect} for $k>2$]
Let $k>2$ and $0<\delta\le 1/(6k)$ be given. We set
$\xi_{k+1}:=\tfrac{1}{3}$, and for each $k+1\ge i\ge 2$, we set
\begin{equation}\label{eq:setxiconn}
\xi_{i-1}:=\tfrac{1}{4}\xi_i^{k-1}3^{-\binom{k}{2}}\,.
\end{equation}
We choose
\begin{equation}\label{eq:setmuepsconn}
  \mu:=\big( \tfrac{1}{10k}10^{-10k^2}\delta^2\xi_1 \big)^2 \quad
  \text{and}\quad\eps\le\mu
\end{equation}
to be small enough for Proposition~\ref{prop:cliquecount} with input $\mu$ and $k$.
Let $0<p<1$ and $G$ be an $(\eps,p,k-1,2k-1)$-pseudorandom graph on $n$
vertices. Let $U$ be a subset of $V(G)$ of size $|U|\ge\delta n$. Suppose that
$\tpl{x}$ and $\tpl{y}$ are disjoint $k$-tuples which are $(\delta,p)$-connected to $U$.

We choose pairwise disjoint subsets $U_1,\ldots,U_{2k},U'_1,\ldots,U'_k$ of $U$
with
\begin{alignat}{2}
  |U_i|,|U'_i|&=\tfrac{1}{3k}\delta^2(p/2)^{k-i+1}n
  & \qquad &\text{for $i\le k$, and} \label{eq:con:Ui:small} \\
  |U_i|&=\tfrac{1}{3k}\delta n
  &&\text{for $i>k$}  \label{eq:con:Ui:large}
\end{alignat}
as follows. We first choose the disjoint sets $U_1$ in $U\cap
N(x_1,\ldots,x_k)$ and $U'_1$ in $U\cap N(y_1,\ldots,y_k)$. From the
remaining vertices in $U$ we then choose the disjoint sets $U_2$ in $U\cap
N(x_2,\ldots,x_k)$ and $U'_2$ in $U\cap N(y_2,\ldots,y_k)$. We continue in
this fashion, choosing for each $i\le k$ the set $U_i$ in $U\cap
N(x_i,\ldots,x_k)$ and the set $U'_i$ in $U\cap
N(y_i,\ldots,y_k)$. Choosing these sets is possible by the
$(\delta,p)$-connectedness of $\tpl{x}$ and $\tpl{y}$ to $U$. Finally we
choose in the remaining vertices of~$U$ disjoint sets
$U_{k+1},\ldots,U_{2k}$ arbitrarily of the prescribed size.  Further, for
each $i\in[k]$ we let $U'_{k+i}:=U_{2k-i+1}$. To summarise, we constructed
$3k$ disjoint sets which we will use to construct $k$-paths: we will find
many $k$-paths starting in $\tpl x$ with one vertex in each of
$U_1,\dots,U_{2k}$ (that is why we chose $U_1,\dots,U_k$ in the
neighbourhood of vertices from~$\tpl x$), and many $k$-paths starting in
$\tpl y$ using $U'_1,\dots,U'_{2k}$. We will argue that, since
$U_{k+1},\dots,U_{2k}$ and $U'_{2k},\dots,U'_{k+1}$ coincide, two of these
$k$-paths join.

More precisely, for each $1\le i\le k+1$, we call a $k$-clique $\tpl{c}$ in
$K_k(U_i,\ldots,U_{i+k-1})$ \emph{good} (with respect to $\vecx$) if there
is a $k$-path from $\vecx$ with one vertex in each of $U_1,\ldots,U_{i-1}$
followed by $\tpl{c}$, in that order, and \emph{bad} otherwise.  We will
use the following claim, whose proof we postpone.

\begin{claim}\label{clm:conn:indhyp} For each $1\le
i\le k+1$, all but at most
$\xi_i \EK_k(U_i,\ldots,U_{i+k-1})$
of the $k$-cliques in $K_k(U_i,\ldots,U_{i+k-1})$ are good.
\end{claim}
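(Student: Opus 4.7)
The plan is to prove the claim by induction on $i$. The base case $i=1$ is immediate: since $U_j\subseteq N(x_j,\ldots,x_k)$ for each $1\le j\le k$, every $k$-clique $(c_1,\ldots,c_k)\in K_k(U_1,\ldots,U_k)$ extends $\vecx$ to a $k$-path $(x_1,\ldots,x_k,c_1,\ldots,c_k)$, so no clique in $K_k(U_1,\ldots,U_k)$ is bad.

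For the inductive step, write $B'_i$ for the set of bad $k$-cliques in $K_k(U_i,\ldots,U_{i+k-1})$, so by hypothesis $|B'_i|\le\xi_i\EK_k(U_i,\ldots,U_{i+k-1})$. A $k$-clique $\vecc=(c_{i+1},\ldots,c_{i+k})\in K_k(U_{i+1},\ldots,U_{i+k})$ is bad precisely when every $v_i\in N_{U_i}(\vecc)$ yields a bad clique $(v_i,c_{i+1},\ldots,c_{i+k-1})\in B'_i$, since the only new adjacency needed to glue a stage-$i$ $k$-path onto $\vecc$ is $v_ic_{i+k}$, which is automatic once $v_i\in N_{U_i}(\vecc)$. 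I would split the set $B_{i+1}$ of stage-$(i+1)$ bad cliques into $A=\{\vecc\in B_{i+1}:|N_{U_i}(\vecc)|\ge\alpha p^k|U_i|\}$ and $C=B_{i+1}\setminus A$, for a threshold $\alpha$ chosen as a small constant multiple of $\xi_{i+1}/\xi_i$, and bound each part separately.

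For $A$, I would double-count pairs $(\vecc,v_i)$ with $\vecc\in A$ and $v_i\in N_{U_i}(\vecc)$, giving at least $|A|\cdot\alpha p^k|U_i|$ on one side and at most $\sum_{K\in B'_i}|N_{U_{i+k}}(K)|$ on the other. To bound the latter sum, I would show that for all but a negligible number of $k$-cliques $K\in K_k(U_i,\ldots,U_{i+k-1})$ one has $|N_{U_{i+k}}(K)|\le(1+\mu)p^k|U_{i+k}|$; this is established by iterating Lemma~\ref{lem:pseudorandom}, using $(\eps,p,k-1,2k-1)$-pseudorandomness to add one vertex of $K$ at a time and control at each step the number of extending vertices whose degree into the common neighborhood built so far is atypical. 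The atypical contributions are absorbed by the size bounds on the $U_j$. This yields $|A|\le\tfrac12\xi_{i+1}\EK_k(U_{i+1},\ldots,U_{i+k})$, the loss being covered by the recursion~\eqref{eq:setxiconn} defining $\xi_{i+1}$ from $\xi_i$.

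For $C$, I would apply Proposition~\ref{prop:cliquecount} to both $K_{k+1}(U_i,\ldots,U_{i+k})$ and $K_k(U_{i+1},\ldots,U_{i+k})$; the size bounds~\eqref{eq:con:Ui:small}--\eqref{eq:con:Ui:large} are chosen precisely so that both applications are valid. Dividing the two estimates gives $\sum_{\vecc}|N_{U_i}(\vecc)|=(1\pm\mu)p^k|U_i|\cdot|K_k(U_{i+1},\ldots,U_{i+k})|$, and the iterated pseudorandomness argument sketched above also furnishes the pointwise upper bound $|N_{U_i}(\vecc)|\le(1+\mu)p^k|U_i|$ for all but negligibly many $\vecc$. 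A first-moment deviation argument then forces $|C|=O(\mu)\EK_k(U_{i+1},\ldots,U_{i+k})\le\tfrac12\xi_{i+1}\EK_k(U_{i+1},\ldots,U_{i+k})$, thanks to the choice of $\mu$ in~\eqref{eq:setmuepsconn}. The main obstacle throughout is precisely this iterated control of common-neighborhood sizes for $k$-tuples: a single application of Lemma~\ref{lem:pseudorandom} only bounds degrees into one set, so to reach the desired $p^k|U|$ scale one must build the common neighborhood vertex-by-vertex while carefully accounting for errors introduced by atypical choices at each step.
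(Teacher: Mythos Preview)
Your forward induction (from stage~$i$ to stage~$i+1$) with the $A/C$ split is a genuinely different route from the paper's. The paper argues contrapositively: assuming at least $\xi_i\EK_k(U_i,\ldots,U_{i+k-1})$ bad cliques at stage~$i$, it builds (in an inner claim) at least $\xi_i^{k-1}3^{-\binom{k}{2}}p^{\binom{k-1}{2}}|U_i|\cdots|U_{i+k-2}|$ \emph{normal} $(k-1)$-cliques $\vecc\in K_{k-1}(U_i,\ldots,U_{i+k-2})$, each having simultaneously a large common neighbourhood $C'_0(\vecc)$ in $U_{i-1}$ and a set $C_k(\vecc)\subset U_{i+k-1}$ of size $\ge\eps p^{k-1}n$ of bad extensions. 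Then the $(\eps,p,k-1,2k-1)$ condition is invoked exactly once, and in a very targeted way: since $|C_k(\vecc)|\ge\eps p^{k-1}n$, at most $\eps p^{2k-1}n$ vertices of $C'_0(\vecc)$ miss $C_k(\vecc)$ entirely, and any $c_0\in C'_0(\vecc)$ with a neighbour $c_k\in C_k(\vecc)$ makes $(c_0,\vecc)$ bad at stage~$i-1$. Summing over normal $\vecc$ contradicts the stage-$(i-1)$ hypothesis. The construction of the normal cliques is itself a vertex-by-vertex build-up using only Proposition~\ref{prop:cliquecount} with $r\le k$.

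There is, however, a concrete gap in your handling of~$C$. You propose to apply Proposition~\ref{prop:cliquecount} to $K_{k+1}(U_i,\ldots,U_{i+k})$, but that proposition is stated only for $r\le k$; invoking it with $r=k+1$ means taking its input parameter to be $k+1$, which requires $(\eps,p,k,2k)$-pseudorandomness, strictly stronger than the available $(\eps,p,k-1,2k-1)$. (This is not a relabelling issue: a pair $|X|\sim p^kn$, $|Y|\sim p^{2k}n$ satisfies neither threshold pair of the $(k-1,2k-1)$ condition unless $p^{k+1}\ge\eps$.) So the identity $\sum_\vecc|N_{U_i}(\vecc)|=|K_{k+1}|$ is not directly accessible, and your first-moment deviation argument for~$|C|$ does not go through as written. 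A second, related difficulty concerns part~$A$: you need the contribution of the atypical $K$ (those with $|N_{U_{i+k}}(K)|>(1+\mu)p^k|U_{i+k}|$) to $\sum_{K\in B'_i}|N_{U_{i+k}}(K)|$ to be at most a small multiple of $\xi_i\EK_k\cdot p^k|U_{i+k}|$; since each such $K$ may contribute up to $|U_{i+k}|$, you in fact need the \emph{number} of atypical $K$ to be $o(p^k)\EK_k$, a factor $p^k$ stronger than ``negligible compared to $\EK_k$''. Achieving this from iterated applications of Lemma~\ref{lem:pseudorandom} is not obvious with only the $(k-1,2k-1)$ hypothesis.

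The paper's backward argument sidesteps both problems: it never counts $K_{k+1}$'s, and the one place $(\eps,p,k-1,2k-1)$ (as opposed to $(\eps,p,k-1,2k-2)$, which suffices for Proposition~\ref{prop:cliquecount}) is used is precisely the $C'_0(\vecc)$--$C_k(\vecc)$ matching, whose two set sizes sit exactly at the $p^{2k-1}n$ and $p^{k-1}n$ thresholds.
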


This claim implies the desired statement.  Indeed, by
Claim~\ref{clm:conn:indhyp} all but at most
$\tfrac{1}{3}\EK(U_{k+1},\ldots,U_{2k})$ of the $k$-cliques in
$K_k(U_{k+1},\ldots,U_{2k})$ are good with respect to $\vecx$. Similarly, 
for each $1\le i\le k$ we call a clique in $K_k(U'_i,\ldots,U'_{i+k-1})$ \emph{good
with respect to $\vecy$} if it is the end of a $k$-path from $\tpl{y}$
using one vertex in each of $U'_1,\ldots,U'_{i+k-1}$ in that order. By symmetry
Claim~\ref{clm:conn:indhyp} guarantees that also all but at most
$\tfrac{1}{3}\EK(U_{k+1},\ldots,U_{2k})$ of the $k$-cliques in
$K_k(U'_{k+1},\ldots,U'_{2k})=K_k(U_{2k},\ldots,U_{k+1})$ are good with
respect to $\vecy$. By Proposition~\ref{prop:cliquecount}, there are at
least
\[(1-\mu)\EK(U_{k+1},\ldots,U_{2k})>\tfrac{2}{3}\EK(U_{k+1},\ldots,U_{2k})\]
cliques in $K_k(U_{k+1},\ldots,U_{2k})$, and therefore there must exist a
clique which is both good with respect to $\vecx$ and to $\vecy$. Hence we
obtain the desired $(\vecx-\vecy)$-connecting $k$-path.
\end{proof}

It remains to establish Claim~\ref{clm:conn:indhyp}, which we prove
by induction on~$i$.  

\begin{proof}[Proof of Claim~\ref{clm:conn:indhyp}]
  For the base case $i=1$, observe that by definition of the sets $U_1,\ldots,U_k$ there are no
  bad cliques in $K_k(U_1,\ldots,U_k)$.

  For the induction step, assume $2\le i\le k+1$. Let
  $W_0:=U_{i-1},\ldots,W_k:=U_{i+k-1}$. Suppose for contradiction that
  $K_k(W_1,\ldots,W_k)$ contains at least $\xi_i \EK_k(W_1,\ldots,W_k)$ bad
  cliques. We shall show that this implies at least $\xi_{i-1}
  \EK_k(W_0,\ldots,W_{k-1})$ bad cliques in $K_k(W_0,\ldots,W_{k-1})$,
  contradicting the induction hypothesis.

  To this end we shall find many cliques~$\tpl{c}$  of size $k-1$ in
  $K_{k-1}(W_1,\ldots,W_{k-1})$ with the following two properties.
  Firstly, $\tpl{c}$ has a set $C'_0(\tpl{c})$ of common neighbours in
  $W_0$ of size at least $(1-\eps)^{k-1}p^{k-1}|W_0|$ (i.e.\ almost the
  expected number). Secondly, there is a set $C_k(\tpl{c})$ of vertices in
  $W_k$ with $|C_k(\tpl{c})|\ge\eps p^{k-1}n$ (i.e.\ a small but constant fraction
  of the average) such that $(\tpl{c},c_k)$ forms a bad clique for each
  $c_k\in C_k(\tpl{c})$. If a $(k-1)$-clique $\tpl{c}$ has these two properties we also say
  that $\tpl{c}$ is a \emph{normal clique}.
  \begin{claim}\label{cl:normal}
    $K_{k-1}(W_1,\ldots,W_{k-1})$ contains at least 
    \[\prod_{j=1}^{k-1}\xi_i3^{-j}p^{j-1}|W_j|=\xi_i^{k-1}3^{-\binom{k}{2}}p^{\binom
      {k-1}{2}}|W_1|\cdots|W_{k-1}| \]
    normal $(k-1)$-cliques.
  \end{claim}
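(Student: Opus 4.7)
The plan is to prove Claim~\ref{cl:normal} by a sequential construction of normal $(k-1)$-cliques. I would build $(w_1,\ldots,w_{k-1})$ vertex by vertex across $k-1$ stages, maintaining at each intermediate stage $j\in\{0,1,\ldots,k-1\}$ a collection of \emph{good} partial tuples $(w_1,\ldots,w_j)$. A partial tuple is good if (a)~it forms a $j$-clique; (b)~every common neighbourhood $|N_{W_l}(w_1,\ldots,w_j)|$ for $l\in\{0\}\cup\{j+1,\ldots,k\}$ lies within $(1\pm\eps)^j p^j|W_l|$ of its expected value; and (c)~at least $G_j$ bad $k$-cliques in $K_k(W_1,\ldots,W_k)$ extend it. The thresholds $G_j$ are chosen to shrink geometrically, with $G_0=\xi_i\EK_k(W_1,\ldots,W_k)$ so that the single empty tuple qualifies at stage $0$ by the assumption of Claim~\ref{clm:conn:indhyp}, and with $G_{k-1}\geq\eps p^{k-1}n$ so that terminal good tuples satisfy the normal condition on $|C_k(\tpl c)|$; invariant (b) at $l=0$ and $j=k-1$ coincides with the normal condition on $|C'_0(\tpl c)|$.

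In the step from stage $j$ to stage $j+1$, for any fixed good tuple $(w_1,\ldots,w_j)$ I would count good extensions $w_{j+1}$ within $N_{W_{j+1}}(w_1,\ldots,w_j)$, which has size at least $(1-\eps)^jp^j|W_{j+1}|$ by invariant (b). Two kinds of loss must be controlled. First, extensions violating (c): summing the number of bad $k$-cliques extending $(w_1,\ldots,w_{j+1})$ over $w_{j+1}\in N_{W_{j+1}}(w_1,\ldots,w_j)$ returns the number extending $(w_1,\ldots,w_j)$, which is at least $G_j$; together with an upper bound on the individual summands derived from Proposition~\ref{prop:cliquecount} applied to the common neighbourhoods within $W_{j+2},\ldots,W_k$, a standard averaging argument controls these exceptions provided the ratio $G_j/G_{j+1}$ is chosen large enough (a factor of about $3p^j$ suffices, which fixes the explicit form of $G_j$). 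Second, extensions violating (b) at some $l$: by Lemma~\ref{lem:pseudorandom} applied with $X=N_{W_l}(w_1,\ldots,w_j)$ (whose size meets the pseudorandomness threshold $\eps p^{k-1}n$ by the previous stage's invariant, together with the implied bound $p^{2k-1}n>\eps^{-1}$), the number of such exceptions is at most $\eps p^{2k-1}n$ per $l$, which is negligible compared to the candidate set. Summing over all $l$ and both deviation directions, and choosing $\eps$ small enough as a constant depending only on $k$, $\delta$ and $\xi_i$, yields at least $\xi_i 3^{-(j+1)}p^j|W_{j+1}|$ good extensions per good tuple at stage $j$.

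Taking the product of the per-stage expansion rates across the $k-1$ steps, starting from the single good empty tuple at stage $0$, yields the asserted count $\prod_{j=1}^{k-1}\xi_i 3^{-j}p^{j-1}|W_j|$ of normal $(k-1)$-cliques. The main obstacle is juggling the multiple typicality invariants against the smallest of the $W_l$ --- especially $W_0$, which in the worst case (index $i=2$ of the outer induction of Claim~\ref{clm:conn:indhyp}) has size scaling only as $p^kn$. Managing the $\eps$-budget so that none of the geometric factors $(1\pm\eps)^j$ blows up, and so that Lemma~\ref{lem:pseudorandom} remains applicable at every stage for every common neighbourhood invoked, is where the bulk of the book-keeping lives.
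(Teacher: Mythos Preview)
Your approach is essentially the paper's: both build normal $(k-1)$-cliques vertex by vertex, maintaining at each stage a typicality invariant (your (b), the paper's \eqref{eq:normal:typical}) and a bad-clique-count invariant (your (c), the paper's \eqref{eq:normal:bad}), and both invoke Proposition~\ref{prop:cliquecount} to bound the number of $k$-cliques through a prescribed set when carrying out the averaging. The paper phrases the averaging as ``select the $\xi_i 3^{-j}p^{j-1}|W_j|$ extensions with the most bad cliques and show that even the worst of these has enough'', which is the same pigeonhole as your ``many extensions must exceed the next threshold $G_{j+1}$''.

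There is one technical slip. You assert that $|N_{W_l}(w_1,\ldots,w_j)|\ge\eps p^{k-1}n$ for all relevant $l$, but this fails for $l=0$: in the worst case $i=2$ one has $|W_0|=|U_1|=\Theta(p^kn)$, so after intersecting with $j\ge 1$ further neighbourhoods the set can be as small as $\Theta(p^{j+k}n)$, well below $\eps p^{k-1}n$. The fix (which is exactly what the paper does) is to apply Lemma~\ref{lem:pseudorandom} with the roles of the two exponents in the pseudorandomness swapped: $(\eps,p,k-1,2k-1)$-pseudorandomness implies $(\eps,p,k-1,2k-2)$-pseudorandomness, and since $|N_{W_l}(\vecc)|\ge\eps p^{2k-2}n$ always holds (this is \eqref{eq:normal:NWllarge} in the paper), one obtains at most $\eps p^{k-1}n$ atypical vertices per $l$. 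That is a weaker bound than your claimed $\eps p^{2k-1}n$, but it is still negligible compared to the candidate set $|N_{W_{j+1}}(\vecc)|\ge\tfrac12 p^j|W_{j+1}|$, so the rest of your argument goes through unchanged.
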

  Before proving this claim we argue that this implies the desired
  contradiction. Indeed, let $\tpl{c}$ be a normal $(k-1)$-clique in
  $K_{k-1}(W_1,\ldots,W_{k-1})$. Then by definition we have 
  \begin{equation*}\begin{split}
    |C'_0(\tpl{c})| &\ge(1-\eps)^{k-1}p^{k-1}|W_0|
    \ge (1-\eps)^{k-1}p^{k-1}\big(\tfrac12|W_0|+\tfrac12|U_1|\big) \\
    & \geByRef{eq:con:Ui:small}(1-\eps)^{k-1}p^{k-1}
        \left(\frac12|W_0|+
        \frac12\cdot\frac{\delta^2}{3k\cdot 2^k} p^{k} n \right) \\
    & \geByRef{eq:setmuepsconn}
    \tfrac{1}{2}(1-\eps)^{k-1}p^{k-1}|W_0|+\eps p^{2k-1}n
  \end{split}\end{equation*}
  and 
  \begin{equation*}
    |C_k(\tpl{c})|
    \ge\xi_i3^{1-k}p^{k-1}|W_k|
    \geByRef{eq:con:Ui:large} \xi_i 3^{-k} \cdot \frac{1}{k}\delta \cdot p^{k-1} n
    \geByRef{eq:setmuepsconn} \eps p^{k-1}n \,.
  \end{equation*}
  Thus, since~$G$ is $(\eps,p,k-1,2k-1)$-pseudorandom,
  Lemma~\ref{lem:pseudorandom} implies that at most $\eps p^{2k-1}n$
  vertices of $C'_0(\tpl{c})$ do not have any neighbours in
  $C_k(\tpl{c})$. It follows that the set $C_0(\tpl{c})$ of vertices in
  $C'_0(\tpl{c})$ which do have neighbours in $C_k(\tpl{c})$ has size at
  least $\tfrac{1}{2}(1-\eps)^{k-1}p^{k-1}|W_0|$.

  Why are we interested in these edges $c_0c_k$ between $C_0(\tpl{c})$ and
  $C_k(\tpl{c})$? By definition of $C_k(\tpl{c})$ the $k$-clique
  $(\tpl{c},c_k)$ is a bad $k$-clique in $K_k(W_1,\ldots,W_k)$. Hence,
  since by definition of $C'_0(\tpl{c}) \supseteq C_0(\tpl{c})$ we have
  $c_0\in N_{W_0}(\tpl{c})$, the edge $c_0c_k$ witnesses that also the
  $k$-clique $(c_0,\tpl{c})$ must be bad (in
  $K_{k-1}(W_1,\ldots,W_{k-1})$).
  Because
  $\EK_k(W_0,\ldots,W_{k-1})=p^{\binom{k}{2}}\prod_{i=0}^{k-1}|W_i|$ by
  definition, it therefore follows from Claim~\ref{cl:normal} that we find at
  least
  \begin{equation*}\begin{split}
    |C'_0(\tpl{c})| \cdot &\xi_i^{k-1}3^{-\binom{k}{2}}p^{\binom{k-1}{2}}|W_1|\cdots|W_{k-1}| \\
    &\ge \tfrac{1}{2}(1-\eps)^{k-1}p^{k-1}|W_0|\cdot
         \xi_i^{k-1}3^{-\binom{k}{2}}p^{\binom{k-1}{2}}|W_1|\cdots|W_{k-1}|\\
    &=\tfrac{1}{2}(1-\eps)^{k-1}\xi_i^{k-1}3^{-\binom{k}{2}}\EK_k(W_0,\ldots,W_{k-1})
    \greaterByRef{eq:setxiconn}\xi_{i-1}\EK_k(W_0,\ldots,W_{k-1})
  \end{split}\end{equation*}
  bad cliques in $K_k(W_0,\ldots,W_{k-1})$, which is the desired contradiction.

  \begin{claimproof}[Proof of Claim~\ref{cl:normal}]
    We construct the normal $(k-1)$-cliques vertex by vertex in the following
    way. We first construct a set $Z_1\subset W_1$ with
    $|Z_1|\ge\xi_i|W_1|/3$ and then for each $c_1\in Z_1$ a set
    $Z_2(c_1)\subset N_{W_2}(c_1)$ with $|Z_2|\ge\xi_ip|W_2|/9$, and so on, in
    general constructing for $c_1\in Z_1,c_2\in Z_2(c_1),\dots,c_{j-1}\in
    Z_{j-1}(c_1,\dots,c_{j-2})$ a set $Z_j(c_1,\dots,c_{j-1})\subset N_{W_j}(c_1,\dots,c_{j-1})$
    with
    \begin{equation}\label{eq:normal:Z}
      |Z_j(c_1,\dots,c_{j-1})|\ge \xi_i p^{j-1}|W_j|/3^j \,,
    \end{equation}
    where $j$ ranges from~$1$ to~$k-1$, such that the following properties
    hold for each $c_j\in Z_j(c_1,\dots,c_{j-1})$.  Firstly,
    $(c_1,\dots,c_j)$ is in at least
    \begin{equation}\label{eq:normal:bad}
     % \frac{\xi_i \EK_k(W_1,\dots,W_k)}{3^j|W_1|\dots|W_j|}\ge 
      \xi_i 3^{-j} p^{\binom{k}{2}-\binom{j}{2}}|W_{j+1}|\dots|W_k|
    \end{equation}
    bad $k$-cliques in $K_k(W_1,\dots,W_k)$.  Secondly, for each
    $\ell\in\{0\}\cup\{j+1,\dots,k\}$ the vertex $c_j$ is \emph{typical} with
    respect to $N_{W_\ell}(c_1,\dots,c_{j-1})$, that is,
    \begin{equation}\label{eq:normal:typical}
      |N(c_j)\cap N_{W_\ell}(c_1,\dots,c_{j-1})| = (1\pm\eps)p
      |N_{W_\ell}(c_1,\dots,c_{j-1})|\,.
    \end{equation}
    Observe that by definition $(c_1,\dots,c_j)$ form a clique for each
    $c_1\in Z_1$, $c_2\in Z_2(c_1)$, \dots, $c_{j}\in
    Z_{j}(c_1,\dots,c_{j-1})$.  Moreover, successfully constructing all
    these sets proves Claim~\ref{cl:normal}. Indeed, for each $c_1\in Z_1$,
    $c_2\in Z_2(c_1)$, \dots, $c_{k-1}\in Z_{k-1}(c_1,\dots,c_{k-2})$ the
    clique $(c_1,\dots,c_{k-1})$ satisfies
    $|N_{W_0}(c_1,\dots,c_{k-1})|\ge(1-\eps)^{k-1}p^{k-1}|W_0|$
   by~\eqref{eq:normal:typical}, and $(c_1,\dots,c_{k-1})$ is in at
    least $\xi_i 3^{-k+1} p^{k-1}|W_k|$ bad $k$-cliques in
    $K_k(W_1,\dots,W_k)$ by~\eqref{eq:normal:bad}, so $(c_1,\dots,c_k)$ is
    normal. By~\eqref{eq:normal:Z} there are 
    \[\prod_{j=1}^{k-1}\xi_i3^{-j}p^{j-1}|W_j|=\xi_i^{k-1}3^{-\binom{k}{2}}p^{\binom
      {k-1}{2}}|W_1|\cdots|W_{k-1}| \] such $(k-1)$-cliques
    $(c_1,\dots,c_{k-1})$.

    It remains to show that we can construct sets $Z_j(c_1,\dots,c_{j-1})$
    satisfying~\eqref{eq:normal:Z}, \eqref{eq:normal:bad}
    and~\eqref{eq:normal:typical}. We proceed by induction on $j$, where
    the base case and the inductive step use the same reasoning.  So let
    $\vecc=(c_1,\dots,c_{j-1})$ with $c_1\in Z_1$, \dots, $c_{j-1}\in
    Z_{j-1}(c_1,\dots,c_{j-2})$ be fixed and assume that we constructed
    $Z_1$, \dots, $Z_{j-1}(c_1,\dots,c_{j-2})$ successfully. Now we
    consider $N_{W_j}(\vecc)$ and first bound the size of the set
    $A_j\subset N_{W_j}(\vecc)$ of vertices that
    violate~\eqref{eq:normal:typical} (where, as is usual, we adopt the
    convention that $N_{W_j}(\vecc)=W_j$ if $\vecc$ is empty, which happens
    in the base case $j=1$).
    By~\eqref{eq:normal:typical} in the induction hypothesis we have for
    each $\ell\in\{0\}\cup\{j,\dots,k\}$ that
    \begin{equation}\begin{split}\label{eq:normal:NWl}
        |N_{W_\ell}(\vecc)|
        &\eqByRef{eq:normal:typical}(1\pm\eps)p|N_{W_\ell}(c_1,\dots,c_{j-2})|
        \eqByRef{eq:normal:typical}(1\pm\eps)^2p^2|N_{W_\ell}(c_1,\dots,c_{j-3})| \\
        &\eqByRef{eq:normal:typical} \dots
        \eqByRef{eq:normal:typical}(1\pm\eps)^{j-1}p^{j-1}|W_\ell|
        = (1\pm\eps)^{j-1}p^{j-1}|U_{i-1+\ell}| \\
        &\geByRef{eq:con:Ui:small}
        (1\pm\eps)^{j-1}p^{j-1}\frac{1}{3k}\delta^2\Big(\frac{p}{2}\Big)^{k-i-\ell+2}n
        =
        \frac{(1\pm\eps)^{j-1}\delta^2}{3k\cdot 2^{k-i-\ell+2}} \cdot p^{k-\ell+j-i+1}n
    \end{split}\end{equation}
    and thus
    \begin{equation}\label{eq:normal:NWllarge}
      |N_{W_\ell}(\vecc)|
       \geByRef{eq:setmuepsconn}\sqrt{\mu} p^{k-\ell+j-1}n
        \geByRef{eq:setmuepsconn} \eps p^{2k-2}n \,,
    \end{equation}
  since $i\ge 2$, $j\le k-1$, $\ell\ge 0$. Since~$G$ is
  $(\eps,p,k-1,2k-2)$-pseudorandom it follows from
  Lemma~\ref{lem:pseudorandom} that only $|A_j|<2k\eps p^{k-1}n$
  vertices in~$N_{W_j}(\vecc)$ violate~\eqref{eq:normal:typical}.

  We now construct the desired set~$Z_j(\vecc)$ as follows. We choose among
  the vertices in $N_{W_j}(\vecc)\setminus A_j$ those $\xi_i
  p^{j-1}|W_j|/3^j$ vertices $c_j$ which are together with $\vecc$ in the
  biggest number of bad $k$-cliques in $K_k(W_1,\dots,W_k)$. This is
  possible since 
  \begin{equation}\label{eq:normal:W}
    |W_j|\geByRef{eq:con:Ui:small} \frac1{3k}\delta^2(p/2)^{k-i-j+2}n
    \ge100k\sqrt{\mu} p^{k-j} n
    \ge100k\eps p^{k-j} n
  \end{equation}
  because $i\ge 2$, and by~\eqref{eq:normal:NWl} we have
  $|N_{W_j}(\vecc)|\ge(1-\eps)^{j-1}p^{j-1}|W_j|\ge\frac34p^{j-1}|W_j|$ and
  hence $|N_{W_j}(\vecc)\setminus A_j|\ge \frac34p^{j-1}|W_j|-2k\eps
  p^{k-1}n\ge\frac12 p^{j-1}|W_j|$. By construction $Z_j(\vecc)$
  satisfies~\eqref{eq:normal:Z} and~\eqref{eq:normal:typical}. In the
  remainder of this proof we will show that~$Z_j(\vecc)$ also
  satisfies~\eqref{eq:normal:bad}.

  For this purpose we next estimate how many $k$-cliques in $K_k(W_1,\dots,W_k)$ use
  $\tpl{c}$ and a vertex in~$A_j$ (which is an upper bound on the number of
  bad $k$-cliques that we ``lose'' to~$A_j$). Observe that
  $\big|K_{k-j+1}\big(A_j,N_{W_{j+1}}(\vecc),\ldots,N_{W_k}(\vecc)\big)\big|$
  is exactly the number of such $k$-cliques.  In order to upper bound this
  quantity we want to apply Proposition~\ref{prop:cliquecount} and so we
  must justify that the sets $A_j,N_{W_{j+1}}(\vecc),\ldots,N_{W_k}(\vecc)$
  are large enough for this application. In fact, $|A_j|$ is not large
  enough, but we can rectify this by adding arbitrary vertices of~$N_{W_j}(\vecc)$ to
  obtain a set $A'_j$ of size 
  % $\mu2^{k}\delta^{-2}|N_{W_j}(\vecc)|\ge \mu p^{k-1} n$. 
  $\sqrt{\mu}|N_{W_j}(\vecc)|\ge \mu p^{k-1} n$, where we used~\eqref{eq:normal:NWllarge}. 
 By~\eqref{eq:normal:NWllarge} we also have
  $|N_{W_\ell}(\vecc)|\ge \mu p^{k-\ell+j-1}n$.  Thus
  Proposition~\ref{prop:cliquecount} implies that at most 
    \begin{equation*}\begin{split}
      \big|K_{k-j+1}\big(A_j,&N_{W_{j+1}}(\vecc),\ldots,N_{W_k}(\vecc)\big)\big|
      \le \big|K_{k-j+1}\big(A'_j,N_{W_{j+1}}(\vecc),\ldots,N_{W_k}(\vecc)\big)\big|\\
      &\le
      (1+\mu)\EK_{k-j+1}\big(A'_j,N_{W_{j+1}}(\vecc),\ldots,N_{W_k}(\vecc)\big)\\
      &=(1+\mu) p^{\binom{k-j+1}{2}}\sqrt{\mu}|N_{W_j}(\vecc)||N_{W_{j+1}}(\vecc)|\cdots|N_{W_k}(\vecc)| \\
      & \leByRef{eq:normal:NWl} 
      (1+\mu) p^{\binom{k-j+1}{2}}\sqrt{\mu}
      (1+\eps)^{(k-j+1)(j-1)}p^{(k-j+1)(j-1)}|W_j|\cdots|W_k| \\
      &\leByRef{eq:setmuepsconn}
      \tfrac{1}{6}\xi_i3^{1-j}p^{\binom{k}{2}-\binom{j-1}{2}}|W_j|\cdots|W_k|\,.
    \end{split}\end{equation*}
   $k$-cliques in $K_k(W_1,\dots,W_k)$ use $\tpl{c}$ and a vertex in~$A_j$.

   From this together with~\eqref{eq:normal:bad} in the induction
   hypothesis we immediately get that the number of bad cliques in
   $K_k(W_1,\dots,W_k)$ which use $\tpl{c}$ and a vertex in
   $N_{W_j}(\vecc)\setminus A_j$ is at least
    $\tfrac{5}{6}\xi_i3^{1-j}
    p^{\binom{k}{2}-\binom{j-1}{2}}|W_j|\cdots|W_k|$.
    We claim (and show below) that moreover at most half of these, i.e., at most
    \begin{equation}\label{eq:normal:badZ}
      \tfrac{5}{12}\xi_i3^{1-j}
      p^{\binom{k}{2}-\binom{j-1}{2}}|W_j|\cdots|W_k|
    \end{equation}
    bad cliques in $K_k(W_1,\dots,W_k)$ use $\tpl{c}$ and a vertex
    in~$Z_j(\vecc)$. Hence the vertex $c_j$ in $N_{W_j}\big(\vecc)\setminus(A_j\cup
    Z_j(\vecc)\big)$ which together with $\vecc$ is in the biggest number of bad $k$-cliques 
    in $K_k(W_1,\dots,W_k)$ is in at least 
    \[\tfrac{5}{12}\xi_i3^{1-j}
    p^{\binom{k}{2}-\binom{j-1}{2}}|W_j|\cdots|W_k|/\big|N_{W_j}(\tpl{c})\big|\geByRef{eq:normal:NWl} \xi_i3^{-j
    }p^{\binom{k}{2}-\binom{j}{2}}|W_{j+1}|\cdots|W_k|\]
    such bad $k$-cliques. By construction of $Z_j(\vecc)$  this is thus also
    true for the vertices~$c_j$ in $Z_j(\vecc)$ and hence we get~\eqref{eq:normal:bad}.

    It remains to establish~\eqref{eq:normal:badZ}, which we obtain 
    (similar as before) by bounding the size of $K_{k-j+1}\big(Z_j(\vecc),N_{W_{j+1}}(\vecc),\ldots,N_{W_k}(\vecc)\big)$
    with the help of Proposition~\ref{prop:cliquecount}. Indeed,
    by~\eqref{eq:normal:W} and~\eqref{eq:setmuepsconn} we have $|Z_j(\vecc)|=\xi_i p^{j-1}|W_j|/3^j
    \ge (\xi_i/3^j) 100k\sqrt{\mu} p^{k-1} n \ge\mu p^{k-1}n$ and 
    hence this proposition implies
   \begin{equation*}\begin{split}
      \big|K&_{k-j+1}\big(Z_j(\vecc),N_{W_{j+1}}(\vecc),\ldots,N_{W_k}(\vecc)\big)\big|\\
      &\le
      (1+\mu)p^{\binom{k-j+1}{2}}|Z_j(\vecc)||N_{W_{j+1}}(\vecc)|\cdots|N_{W_k}(\vecc)| \\
      &\leByRef{eq:normal:NWl} (1+\mu)p^{\binom{k-j+1}{2}}\cdot \xi_i3^{-j}p^{j-1}
      |W_j|\cdot (1+\eps)^{(j-1)(k-j)}p^{(j-1)(k-j)}|W_{j+1}|\cdots|W_k|\\
      &\leByRef{eq:setmuepsconn} \tfrac{5}{12}\xi_i3^{1-j}
      p^{\binom{k}{2}-\binom{j-1}{2}}|W_j|\cdots|W_k|
    \end{split}\end{equation*}
    as desired.
  \end{claimproof}
  This concludes the proof of Claim~\ref{clm:conn:indhyp}.
\end{proof}

\section{Proof of Lemma~\ref{lem:reslem} and
  Lemma~\ref{lem:covlem}}\label{sec:reservoir}

In this section we will prove the following technical lemma, which 
implies both Lemma~\ref{lem:reslem} and Lemma~\ref{lem:covlem}. 

\begin{lemma}\label{lem:rescovlem}
Given $k\ge 2$, $0<\delta<1/4$ 
and $0<\beta<1/2$ there exists an  $\eps>0$ % and an $n_0$
such that the following holds. % for all $n\ge n_0$.
Let $0<p<1$ and let $G$ be an $n$-vertex graph.
Suppose that $G$ is $(\eps,p,1,2)$-pseudorandom if $k=2$; and
$(\eps,p,k-1,2k-1)$-pseudorandom and $(\eps,p,k,k+1)$-pseudorandom if
$k>2$.  Let $R$ and $S$ be disjoint subsets of $V(G)$ with $|R|\le\delta
n/(200k)$ and $|S|\ge\delta n$ such that $\deg_{S}(v)\ge \beta\delta pn/2$
for all $v\in R$.
Then there is a $k$-path $P$ contained in $R\cup
S$ with the following properties.
\begin{enumerate}[label=\abc]
 \item\label{rescovlem:R} $R\subset V(P)$ and $|V(P)|\le 50k|R|$.
 \item\label{rescovlem:W} For any $W\subset R$, there is a $k$-path on
$V(P)\setminus W$ whose start and end $k$-tuples are identical to those of $P$.
 \item \label{rescovlem:S}  The start and end $k$-tuples of the $k$-path $P$ are in $S$ and are
$(\tfrac18,p)$-connected to $S\setminus V(P)$.
 \item\label{rescovlem:conn} If $|R|\ge\delta^2 n/(200k)$, then the start and end
 $k$-tuples of $P$ are
$(\tfrac12,p)$-connected to $R$.
\end{enumerate}
\end{lemma}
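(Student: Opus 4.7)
The plan is to construct $P$ in three stages: building an absorbing gadget $G_v$ for each $v\in R$, picking start and end $k$-tuples $\vecu_0,\vecv_0$ in $S$, and chaining everything together via the Connection lemma. The key object is the gadget: for each $v\in R$ I will produce a $k$-path on $2k+1$ vertices of the form $(a_1,\dots,a_k,v,a_{k+1},\dots,a_{2k})$ with $a_1,\dots,a_{2k}\in N_S(v)$, enjoying the ``skip'' property that $(a_1,\dots,a_{2k})$ is also a $k$-path with the same start and end $k$-tuples. A direct comparison of which pairs lie at distance $\le k$ in each of the two orderings shows this property is equivalent to the single requirement that $a_1,\dots,a_{2k}$ form a $k$-path in $N_S(v)$ in the listed order.

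To build $G_v$, I exploit that $|N_S(v)|\ge\beta\delta pn/2$ is large enough (for $\eps$ sufficiently small) that repeated appeals to Lemma~\ref{lem:pseudorandom} are available. Having chosen $a_1,\dots,a_{j-1}$, I select $a_j$ inside the common neighbourhood $N_S(v)\cap N(a_{\max(j-k,1)},\dots,a_{j-1})$, avoiding vertices already used in previous gadgets and insisting that $a_j$ be typical with respect to $S$, so that the resulting end $k$-tuples $(a_1,\dots,a_k)$ and $(a_{k+1},\dots,a_{2k})$ end up $(\delta_{\sublem{lem:connect}},p)$-connected to $S$. The pseudorandomness hypotheses control common neighbourhoods of up to $k+1$ vertices and their interactions with $S$, so the pool of valid candidates is of order $p^{k+1}n$ at every step and in particular non-empty. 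Similarly I pick $\vecu_0,\vecv_0\subset S$, disjoint from all gadgets, that are $(\tfrac14,p)$-connected to $S$ and, when $|R|\ge\delta^2 n/(200k)$, also $(1,p)$-connected to $R$ with generous slack; such tuples exist in abundance by the same pseudorandomness arguments.

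Finally, $|R|+1$ applications of Lemma~\ref{lem:connect}, each with $U$ the subset of $S$ not yet used, chain $\vecu_0\to G_{v_1}\to\dots\to G_{v_{|R|}}\to\vecv_0$ into a single $k$-path $P$. Each connection has at most $7k$ vertices, so $|V(P)|\le 2k+(2k+1)|R|+7k(|R|+1)\le 50k|R|$ gives~\ref{rescovlem:R}, and at every application $|U|\ge\delta n/2$, well within the hypothesis of Lemma~\ref{lem:connect}. For the reservoir property~\ref{rescovlem:W}, to skip any $W\subset R$ I replace each $G_v$ with $v\in W$ by its $v$-omitting sibling $(a_1,\dots,a_{2k})$; the end $k$-tuples are unchanged, so the connecting $k$-paths remain valid. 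Property~\ref{rescovlem:S} then follows because $|V(P)|\le\delta n/4$ is much smaller than $|S|\ge\delta n$, so $(\tfrac14,p)$-connectedness of $\vecu_0,\vecv_0$ to $S$ survives removing $V(P)$ and yields $(\tfrac18,p)$-connectedness to $S\setminus V(P)$; and~\ref{rescovlem:conn} follows analogously from the stronger connectedness of $\vecu_0,\vecv_0$ to $R$.

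The principal technical obstacle is the construction of $G_v$ inside the comparatively small set $N_S(v)$, of size only $\Theta(pn)$. The Extension lemma is not directly applicable here since its parameter would need to depend on $p$, so the construction proceeds by direct, iterated appeals to pseudorandomness via Lemma~\ref{lem:pseudorandom}. Tracking common neighbourhoods of up to $k+1$ vertices at each step, together with the typicality of the chosen vertex with respect to $S$, is precisely where the $(\eps,p,k-1,2k-1)$- and $(\eps,p,k,k+1)$-pseudorandomness hypotheses are used to their full strength.
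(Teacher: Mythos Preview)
Your overall architecture matches the paper's: build a reservoir gadget for each $v\in R$ with the skip property, then chain the gadgets together via Lemma~\ref{lem:connect}. For $k\ge 3$ your gadget is exactly the paper's (a $2k$-vertex $k$-path inside $N_S(v)$, all of whose vertices are adjacent to $v$), and the verification that $(\eps,p,k,k+1)$-pseudorandomness suffices is correct.

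The genuine gap is the case $k=2$. Your gadget requires finding a $2$-path $a_1a_2a_3a_4$ inside $N_S(v)$, a set of size $\Theta(pn)$. To choose $a_3\in N_S(v)\cap N(a_1,a_2)$ you must control the common neighbourhood of three vertices, i.e.\ apply Lemma~\ref{lem:pseudorandom} to a target set of size $\Theta(p^2 n)$. But $(\eps,p,1,2)$-pseudorandomness only gives control when the target set has size at least $\eps p n$, so the step fails whenever $p<\eps$. This is not a detail that can be patched by adjusting constants: with $(\eps,p,1,2)$-pseudorandomness alone one simply cannot force a triple common neighbourhood inside a $\Theta(pn)$-sized set. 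The paper handles $k=2$ with an entirely different, much more elaborate $47$-vertex reservoir graph (Lemma~\ref{lem:resgraph}, case $k=2$), built via a good-edge and pigeonhole argument analogous to the $k=2$ Connection lemma, precisely to avoid ever needing a triple common neighbourhood.

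A second, smaller gap: you assert that when building the gadget for $v$ you can ``avoid vertices already used in previous gadgets'', but you do not justify that $N_S(v)$ retains $\Omega(pn)$ unused vertices at that stage. Since the minimum-degree hypothesis only gives $\deg_S(v)\ge\beta\delta pn/2$ with $\beta$ possibly tiny, a vertex of $R$ processed late could in principle have its entire $S$-neighbourhood already consumed. The paper resolves this by processing vertices of $R$ in order of \emph{fewest} currently-unused $S$-neighbours, and then uses a short double-counting argument against $(\eps,p,0,1)$-pseudorandomness to show that the minimum cannot drop below $\beta\delta pn/8$. Your sketch needs either this ordering trick or an equivalent argument.
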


In order to infer the Reservoir lemma, Lemma~\ref{lem:reslem}, from this
lemma use $R$ as given and set $S:=V(G)\setminus R$. For the Covering
lemma, Lemma~\ref{lem:covlem} use $S$ as given and set $R:=L$.

For the proof of Lemma~\ref{lem:rescovlem} we use the following definition.
Given $k$, a \emph{$k$-reservoir graph} is a graph which contains a
spanning $k$-path $P$ with the following extra property. There is a special
vertex $r$, which we call the \emph{reservoir vertex} of the reservoir
graph, such that $V(P)\setminus\{r\}$ forms a $k$-path whose start and end
$k$-tuples are identical to those of $P$. We also call these tuples the
\emph{start and end tuple of the reservoir graph}.  To give a simple
example, the triangle $abc$ is a $1$-reservoir graph, with $P=(a,c,b)$ and
$c$ being the reservoir vertex, and~$K_5$ is a $2$-reservoir
graph. However, in our proof of Lemma~\ref{lem:rescovlem} we shall need
much sparser reservoir graphs. The following lemma states that such graphs
exist.

\begin{lemma}[Reservoir graph lemma]\label{lem:resgraph}
  For all $k\ge 2$, $\beta>0$ and $0<\delta<1/4$ there exists an $\eps>0$
  such that the following holds. Let $G$ be an $n$-vertex graph, $S$ a
  subset of $V(G)$ of size at least $\delta n/2$, and $R^*$ a subset of
  $V(G)$ of size at least $\delta^2 n/(200k)$. Let $r$ be a vertex of
  $V(G)\setminus S$ with at least $\beta \delta pn/8$ neighbours in
  $S$. For $k\geq 3$ suppose that $G$ is $(\eps,p,k,k+1)$-pseudorandom and
  for $k= 2$ suppose that $G$ is $(\eps,p,1,2)$-pseudorandom.

  Then there is a reservoir graph
  $H$ in $G$ whose reservoir vertex is $r$, whose remaining vertices are in $S$,
  and whose start and end $k$-tuple are $(\tfrac12,p)$-connected to $S$ and
  to $R^*$. Furthermore the reservoir graph has at most $\max(47,2k+1)$ vertices.
\end{lemma}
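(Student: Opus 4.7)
The natural target is a tight gadget on $2k+1$ vertices, namely $P=(a_1,\dots,a_k,r,b_k,\dots,b_1)$, where $A=\{a_1,\dots,a_k\}$ and $B=\{b_1,\dots,b_k\}$ are two $k$-cliques in $N(r)\cap S$. Computing positions in $P$, the tuple $P$ is a $k$-path iff $A$ and $B$ are cliques, $r$ is adjacent to every $a_i$ and every $b_j$, and $a_ib_j$ is an edge whenever $i+j\ge k+2$; the alternate sequence obtained by deleting $r$ is a $k$-path iff, in addition, $a_ib_j$ is an edge whenever $i+j=k+1$. Hence both requirements together amount to locating $A$ and $B$ in $N(r)\cap S$ such that the full set of crossing edges $\{a_ib_j:i+j\ge k+1\}$ is present; this is a total of $3\binom{k+1}{2}$ prescribed edges on $2k+1$ vertices.

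The plan is to build this structure greedily inside the set $N:=N(r)\cap S$, which by hypothesis has size at least $\beta\delta pn/8$. First select $a_k,a_{k-1},\dots,a_1$ one at a time so that each new $a_i$ lies in the common neighbourhood of the previously chosen $a$'s inside $N$ and is moreover \emph{typical}, meaning that its degree into every previously selected common neighbourhood, into $S$, and into $R^*$ is $(1\pm\eps)p$ times the expected value. Lemma~\ref{lem:pseudorandom}, applied via $(\eps,p,k,k+1)$-pseudorandomness (or $(\eps,p,1,2)$ when $k=2$), guarantees at each step that all but at most $\eps p^{k+1}n$ vertices of the current candidate set fail to be typical, while the candidate set itself has size of order $p^{k-i+1}n$, which is substantially larger. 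Then build $B$ analogously, with the additional constraint that each $b_j$ be adjacent to $r$, to $b_{j+1},\dots,b_k$, and to those $a_i$ with $i\ge k+1-j$; at worst $k+1$ simultaneous adjacency constraints are imposed.

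For the small-$k$ regime (when $2k+1<47$), the $3\binom{k+1}{2}$ required edges on only $2k+1$ vertices may be too dense to be found from the available pseudorandomness alone. In that regime I would stretch the gadget by interposing short ``bridge'' segments $(x_1,\dots,x_t)$ and $(y_t,\dots,y_1)$ between $A$ and $r$ and between $r$ and $B$, so arranged that after deletion of $r$ the pair $x_ty_t$ together with its local $k$-neighbourhoods forms a valid $k$-path junction. These bridge vertices are chosen greedily from $N$ using Lemma~\ref{lem:pseudorandom} exactly as above, and $t$ is tuned so that the total number of vertices is at most~$47$, which explains the uniform bound appearing in the lemma.

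The main obstacle is the last step in the construction of each of $A$ and $B$: the common neighbourhood of $k+1$ already-chosen vertices has expected size of order $p^{k+1}n$, which is only barely guaranteed to be at least $\eps^{-1}$ by Remark~\ref{rem:pseudo}, so the pseudorandomness is used right at the boundary of where Lemma~\ref{lem:pseudorandom} applies. Careful bookkeeping is needed to verify that enough vertices in this small common neighbourhood remain typical simultaneously towards the not-yet-chosen candidate sets, towards $S$, and towards $R^*$. Once this is achieved, the $(\tfrac12,p)$-connectedness of the start $k$-tuple $(a_k,\dots,a_1)$ and the end $k$-tuple $(b_k,\dots,b_1)$ to both $S$ and $R^*$ follows directly from the typicality conditions imposed along the way, using $|S|\ge\delta n/2$ and $|R^*|\ge\delta^2n/(200k)$ to meet the lower bound required in the definition of connectedness.
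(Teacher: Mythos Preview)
Your approach for $k\ge 3$ is essentially the paper's: the $(2k+1)$-vertex gadget you describe is exactly a $2k$-vertex $k$-path in $N(r)\cap S$ together with $r$, and the greedy construction with typicality constraints goes through under $(\eps,p,k,k+1)$-pseudorandomness just as you outline.

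The gap is in your treatment of $k=2$. Your diagnosis that the problem arises ``when $2k+1<47$'' is wrong: for every $k\ge 3$ the $(2k+1)$-vertex construction succeeds, regardless of whether $2k+1<47$. The case $k=2$ fails not because the gadget is too dense in some absolute sense, but because the hypothesis is only $(\eps,p,1,2)$-pseudorandomness rather than $(\eps,p,2,3)$. Concretely, to place $b_2$ you need a vertex in $N(r)\cap N(a_1)\cap N(a_2)$, a triple common neighbourhood of expected size $\Theta(p^3 n)$, and $(\eps,p,1,2)$-pseudorandomness gives no control here (Remark~\ref{rem:pseudo} only yields $p^2 n>\eps^{-1}$). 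Your ``bridge segments'' idea does not fix this: inserting extra vertices between $A$ and $r$ does not reduce the depth of any single common-neighbourhood query below three.

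The paper's $k=2$ argument is genuinely different and substantially harder. It builds a $47$-vertex gadget whose skeleton (the ``spine'') is a specific $17$-vertex graph on $a_1,\dots,a_8,r,b_1,\dots,b_8$ in which no vertex needs to be found in a common neighbourhood of more than two previously placed vertices. The vertices $a_3,\dots,a_8$ (and symmetrically $b_3,\dots,b_8$) are located via a cascade of candidate sets $A_3,A_4,A_5,A_6(a_5),A_7(a_5),A_8(a_5,a_7)$ of sizes alternating between $\Theta(p^2|S|)$ and $\Theta(p|S|)$, culminating in linear-sized sets $A_7,B_7$; a pigeonhole argument (exactly parallel to the $k=2$ Connection Lemma) then finds an edge $a_7b_7$ that is ``good'' from both sides, after which everything is read back. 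Three applications of the Connection Lemma fill in the remaining $30$ vertices. None of this is captured by your bridge sketch.
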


Observe that this lemma allows us to specify the reservoir vertex~$r$ and a
small vertex set~$S$ which contains the remaining constant number of
vertices of the reservoir graph -- the only requirement being that~$r$ has
many neighbours in~$S$.  Moreover, the lemma guarantees well-connectedness
of the start and end tuple to~$S$ and an additional vertex set~$R^*$, which
we shall use to obtain property~\ref{rescovlem:conn} in the proof of
Lemma~\ref{lem:rescovlem}.

We prove Lemma~\ref{lem:resgraph} at the end of this section and next show
how it entails Lemma~\ref{lem:rescovlem}. We first briefly explain the
idea. Roughly speaking, our goal is to construct for each $r\in R$ a
reservoir graph which uses~$r$ as reservoir vertex and to connect up all
these reservoir graphs into a long path. Observe that the definition of a
reservoir graph implies that the resulting long path is a reservoir path,
that is, a $k$-path satisfying~\ref{rescovlem:W} of
Lemma~\ref{lem:rescovlem}.

\begin{proof}[Proof of Lemma~\ref{lem:rescovlem}]
  Let $k\ge 2$, $\delta$ and $\beta$ satisfy the conditions given in
  the statement of Lemma~\ref{lem:rescovlem}. We require $\eps>0$ to be
  sufficiently small to apply Lemma~\ref{lem:resgraph} with input $k$, $\beta$ and $\delta$,
  to apply Lemma~\ref{lem:connect} with input $k$ and $\delta/4$, and smaller
  than $\beta\delta/(200k)$.
  
  Let $p$, $G$, $R$ and $S$ be as in the statement of
  Lemma~\ref{lem:rescovlem}. Our approach now is as follows.  We want to
  choose one vertex $r_1$ of $R$ and use the Reservoir graph lemma,
  Lemma~\ref{lem:resgraph}, to construct a $k$-reservoir graph $H_1$ with
  reservoir vertex $r_1$ and all other vertices in~$S$. We then want to
  continue by choosing a second vertex $r_2$ of $R$, and repeat this
  procedure to obtain $H_2$, avoiding the vertices of $H_1$. Next we want
  to use the Connection lemma, Lemma~\ref{lem:connect}, to connect the end
  tuple of $H_1$ to the start tuple of $H_2$, again within $S$. We will
  repeat this until finally we construct $H_{|R|}$ and connect it to
  $H_{|R|-1}$, and the result is the desired reservoir path $P$.

  Before we can start we have to set up~$R^*$ for
  Lemma~\ref{lem:rescovlem}. Recall that the purpose of~$R^*$ will be to
  guarantee property~\ref{rescovlem:conn}.  So, if we have $|R|\ge\delta^2
  n/(200k)$, we set $R^*:=R$; otherwise we set $R^*:=V(G)$.

  We now perform the first step of our procedure. Let
  $r_1$ be the vertex of $R$ of lowest degree to~$S$. By our assumptions we
  have $\deg_S(r_1)\ge\beta\delta p n/2$, hence we can apply
  Lemma~\ref{lem:resgraph}. Let~$H_1$ be the $k$-reservoir
  graph with reservoir vertex $r_1$ and remaining vertices in $S$
  guaranteed by this lemma. The start $k$-tuple of $H_1$ is
  $(\tfrac12,p)$-connected to $S$, and so by the choice of $\eps$, and
  Remark~\ref{rem:pseudo}, it is also $(\tfrac18,p)$-connected to $S\setminus
  V(H_1)$. In the following steps of our procedure, whose goal is to
  construct the reservoir path~$P$, we have to be careful to
  avoid destroying this connectedness of the start $k$-tuple to~$S\setminus
  P$ (in order to obtain
  Property~\ref{rescovlem:S} of Lemma~\ref{lem:rescovlem}). Hence we
  shall now fix witnesses of this connectedness and avoid using these
  vertices in the following.
  So let $Z\subset S\setminus V(H_1)$ be a vertex set that witnesses that 
  the start $k$-tuple $(x_1,\dots,x_k)$ of~$H_1$ is $(\tfrac18,p)$-connected to $S\setminus
  V(H_1)$ with $|Z|\le\frac18(p/2)|S|$, which is possible by
  Remark~\ref{rem:witness}.     
  We call the vertices in $Z$ and $V(H_1)$ \emph{used}, and all other
  vertices of $R\cup S$ \emph{unused}.
  
  Now for each $2\le i\le |R|$ in succession, we perform the following
  procedure. Let $S'$ be the unused vertices of $S$. Let $r_i$ be an unused
  vertex of $R$ with fewest neighbours in $S'$. We claim (and justify
  below) that $r_i$ has at least $\beta\delta p n/8$ neighbours in $S'$,
  and that $|S'|\ge\delta n/2+100k$. On this assumption, we can apply
  Lemma~\ref{lem:resgraph} to obtain a $k$-reservoir graph~$H_i$ with
  reservoir vertex~$r_i$ and remaining vertices in~$S'$. By construction
  the end tuple of~$H_{i-1}$ is $(\tfrac12,p)$-connected to~$S'$ and $R^*$, and hence
  by the choice of~$\eps$ also $(\tfrac14,p)$-connected to $S'\setminus
  V(H_i)$, as is the start tuple of $H_i$. Since $\big|S'\setminus
  V(H_i)\big|\ge\delta n/2$, we can apply Lemma~\ref{lem:connect} to find a
  connecting $k$-path of length at most~$7k$ from the end tuple of
  $H_{i-1}$ to the start tuple of~$H_i$ whose remaining vertices are in
  $S'\setminus V(H_i)$. We mark this $k$-path, $r_i$ and $V(H_i)$ as used.
  
  Assuming we successfully complete the above procedure to obtain a
  $k$-path $P$, Lemma~\ref{lem:rescovlem} follows.  Indeed, the path~$P$
  then covers all vertices of $R$ and certainly uses at most $50k|R|$
  vertices, since the $k$-reservoir graph and connecting $k$-path that we
  construct at each step (except the first, where we only construct the
  $k$-reservoir graph) contain at most $50k$ vertices, hence we obtain
  property~\ref{rescovlem:R} of
  Lemma~\ref{lem:rescovlem}. Property~\ref{rescovlem:W} follows from the
  definition of a $k$-reservoir graph and the fact that we created for each
  vertex $r\in R$ one of these reservoir graphs with reservoir vertex~$r$
  and connected them to form~$P$.
  Property~\ref{rescovlem:S} follows
  by observing that $Z\cap V(P)=\emptyset$, witnessing that the start
  $k$-tuple of~$P$ is $(\frac18,p)$-connected to $S\setminus V(P)$.
  Moreover,
  the end tuple of $P$ is the end tuple of $H_{|R|}$, which by construction
  and choice of $\eps$ is $(\tfrac18,p)$-connected to $S\setminus
  V(P)$. Finally we have property~\ref{rescovlem:conn} because each~$H_i$
  is $(\frac12,p)$-connected to~$R^*$.
  
  It remains only to justify our assumptions that $r_i$ has at least
  $\beta\delta p n/8$ neighbours in $S'$ and that $|S'|\ge\delta n/2+23k$
  at each step $i$. The latter clearly follows from the facts that
  $|S|\ge\delta n$, that $|P|\le 50k|R| \le\delta n/4$, that
  $|Z|\le\frac18(p/2)|S|$ and that $n$ is sufficiently large due to the choice of~$\eps$ and Remark~\ref{rem:pseudo}. For the former, suppose
  for contradiction that at step~$i$ in the above procedure, we find that
  the vertex $r_i$ has less than $\beta\delta p n/8$ unused neighbours in
  $S'$. Since $r_i$ has at least $\beta\delta p n/2$ neighbours in $S$, it
  follows that 
  $i\ge\tfrac{3}{8}\beta\delta p n/\max(47,2k+1)>\beta\delta p n/(100k)$. Furthermore, at the step
  $i':=i-\beta\delta p n/(100k)+1$, certainly $r_i$ had at most
  $\beta\delta p n/8+50k(i-i')<\beta\delta p n/2$ unused neighbours, since
  only $50k(i'-i)$ vertices were used in the intervening steps. As the
  vertex $r_i$ was not chosen at any of the steps $i'\le j<i$, it follows
  that at each step $r_j$ had at most as many unused neighbours as $r_i$,
  and in particular less than $\beta\delta p n/2$ unused neighbours. Let
  $S'$ be the set of unused vertices in $S$ at step~$i$. We conclude that
  each of the vertices $Q:=\{r_{i'},\ldots,r_i\}$ has less than
  $\beta\delta p n/2<\delta p n/4$ neighbours in $S'$, and thus $e(Q,S')\le
  \delta p n |Q|/4<(1-\eps)p|Q||S|$. Since $|S|\ge \delta n/2>\eps n$, and
  $|Q|=i-i'=\beta\delta p n/(100k)>\eps p n$, and since $G$ is
  $(\eps,p,0,1)$-pseudorandom, this is a contradiction
  to~\eqref{eq:pseudodisc}.  It follows that our assumptions are justified,
  which completes the proof.
\end{proof}

It remains to prove Lemma~\ref{lem:resgraph}.  Again we split the proof
into the two cases $k=2$, and $k\ge 3$. In the case $k\ge3$ a rather
straightforward construction works, which generalises the example of the triangle
we gave above: Our $k$-reservoir graph has
$2k+1$ vertices, and consists simply of a $2k$-vertex $k$-path in $S$
all of whose vertices are adjacent to~$r$.

\begin{proof}[Proof of Lemma~\ref{lem:resgraph} for $k\ge3$]
  We set $\eps=\beta\delta^2/(1600k^2)$.  
  Our goal is to construct a $2k$-vertex $k$-path in $S\cap
  N(r)$. Obviously, such a $k$-path together with~$r$ forms a $k$-reservoir
  graph with reservoir vertex~$r$. So let $X_1,\ldots,X_{2k}$ be any
  collection of pairwise disjoint subsets of $S\cap N(r)$, each of size
  $\beta\delta pn/(16k)$, which we can choose because $|N(r)\cap
  S|\ge\beta\delta pn/8$.  We now construct the desired $k$-path by
  choosing one vertex from each of the~$X_i$.  However, we have to bear in
  mind that we also want the start and the end $k$-tuple of this path to be
  $(\frac12,p)$-connected to both~$R^*$ and~$S$ (which is why we impose
  conditions \eqref{eq:resgraph:typ1}--\eqref{eq:resgraph:typ4} below).

 We shall call a
  vertex~$x$ \emph{typical} with respect to a set~$Y$ if $\big|N(x)\cap
  Y\big|\ge(1-\eps)p|Y|$, and atypical otherwise.  Since~$G$ is $(\eps,p,k,k+1)$-pseudorandom
  Lemma~\ref{lem:pseudorandom} implies that for each $|Y|\ge \eps p^k n$
  less than $\eps p^{k+1} n$ vertices of~$G$ are atypical with respect
  to~$Y$.

  Firstly, for each $1\le i\le k$, in
  order, we choose any $x_i\in X_i\cap N(x_1,\dots,x_{i-1})$ which is
  typical with respect to each of the following at most~$3k$ sets:
  \begin{align}
    &R^*,R^*\cap N(x_1),\ldots,R^*\cap N(x_1,\ldots,x_{i-1})\,, \label{eq:resgraph:typ1}\\
    &S,S\cap N(x_1),\ldots,S\cap N(x_1,\ldots,x_{i-1})\,,\\
    &X_{i+1}\cap N(x_1,\ldots,x_{i-1}),\ldots,X_{k+1}\cap
    N(x_1,\ldots,x_{i-1})\quad \text{and} \notag\\
    &X_{k+2}\cap N(x_2,\ldots,x_{i-1}),\ldots,X_{k+i-1}\cap
    N(x_{i-1}),X_{k+i}\,. \notag
  \end{align}
  Secondly, for each $k+1\le i\le 2k$, in order, we choose an $x_i\in X_i\cap
  N(x_{i-k},\ldots,x_{i-1})$ which is typical with respect to each of the
  following at most~$3k$ sets:
  \begin{align}
    &R^*,R^*\cap N(x_{k+1}),\ldots,R^*\cap N(x_{k+1},\ldots,x_{i-1})\,,\\
    &S,S\cap N(x_{k+1}),\ldots,S\cap
    N(x_{k+1},\ldots,x_{i-1})\quad\text{and} \label{eq:resgraph:typ4}\\
    &X_{i+1}\cap N(x_{i-k},\ldots,x_{i-1}),\ldots,X_{2k}\cap 
    N(x_{i-k},\ldots,x_{i-1})\,. \notag
  \end{align}
  Clearly vertices $x_1,\dots,x_{2k}$ chosen in this way form a $k$-path.
  Moreover, choosing these vertices is possible for the following reasons.  Since
  $|X_i|\ge\beta\delta pn/(16 k)$ and $|S|\ge|R^*|\ge\delta^2n/(200k)$ and
  by typicality in earlier steps, the smallest sets to which we require
  typicality are $X_\ell\cap N(x_1,\dots,x_{k-1})$ and $X_\ell\cap
  N(x_{i-k},\dots,x_{i-1})$ for certain values of~$\ell$. Each of these
  sets involve the joint neighbourhood of $k-1$ vertices in one of
  the~$X_\ell$, hence (by typicality in earlier steps) these sets are of
  size at least
  $(1-\eps)^{k-1}p^{k-1}|X_\ell|\ge(1-\eps)^{k-1}p^k\beta\delta n/(16
  k)\ge\eps p^k n$. Thus none of the sets to which we require typicality
  are smaller than $\eps p^k n$. In addition, there are at most $3k$ sets to
  which we require typicality, forbidding at most $3k\eps p^{k+1}n$
  vertices for the choice of~$x_i$, out of
  $\big((1-\eps)p\big)^k|X_i|\ge(1-\eps)^k\beta\delta^2
  p^{k+1}n/(200k)>4k\eps p^{k+1}n$ vertices.

  In order to show that $(x_k,\ldots,x_1)$ is moreover
  $(\tfrac12,p)$-connected to $R^*$ we need to check that
  $\deg_{R^*}(x_j,\dots,x_1)\ge\frac12(\frac p2)^{j}|R^*|$ for each
  $j\in[k]$. Indeed, it follows from~\eqref{eq:resgraph:typ1} that
  $\deg_{R^*}(x_j,\dots,x_1)\ge(1-\eps)^j p^j|R^*|>\frac12p^j|R^*|$.
  Similarly, $(x_k,\ldots,x_1)$ is $(\tfrac12,p)$-connected to~$S$, and
  $(x_{k+1},\ldots,x_{2k})$ is $(\tfrac12,p)$-connected to both~$R^*$
  and~$S$.
\end{proof}

In the case $k=2$ we need to work with weaker pseudorandomness conditions
and thus use a more involved construction, illustrated in Figure~\ref{fig:reservoir2}.
In this construction there is a $2$-path from $a_1a_2$ to $b_7b_8$ using all vertices in the
left-to-right order, and a second, using all vertices but the reservoir vertex
$r$, which starts $a_1a_2a_3a_4$ then goes ``left'' to $b_1b_2b_3b_4$, and so
on to finish $b_6b_5b_7b_8$.

We will call the subgraph of the $2$-reservoir graph induced by
$\{a_1,\dots,a_8\}\cup\{b_1,\dots,b_8\}\cup\{r\}$ the \emph{spine} of the
reservoir graph (that is, the graph on the large vertices and with the thick edges in
Figure~\ref{fig:reservoir2}). The construction of this graph uses similar
ideas and a similar pigeonhole argument as the construction of a
connecting $2$-path for the Connection lemma.

\begin{figure}
 \caption{The $2$-reservoir graph}
 \psfrag{a1}{$a_1$}
 \psfrag{a2}{$a_2$}
 \psfrag{a3}{$a_3$}
 \psfrag{a4}{$a_4$}
 \psfrag{a5}{$a_5$}
 \psfrag{a6}{$a_6$}
 \psfrag{a7}{$a_7$}
 \psfrag{a8}{$a_8$}
 \psfrag{r}{$r$}
 \psfrag{b1}{$b_1$}
 \psfrag{b2}{$b_2$}
 \psfrag{b3}{$b_3$}
 \psfrag{b4}{$b_4$}
 \psfrag{b5}{$b_5$}
 \psfrag{b6}{$b_6$}
 \psfrag{b7}{$b_7$}
 \psfrag{b8}{$b_8$}
 \includegraphics[width=12cm]{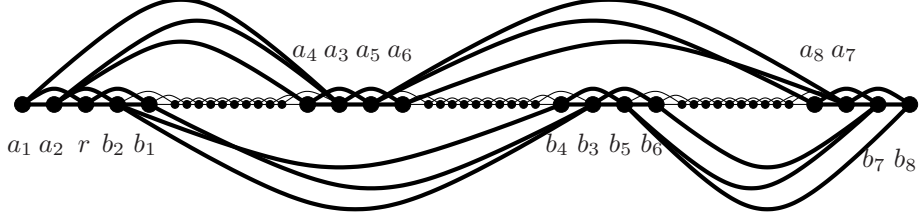}
 \label{fig:reservoir2}
\end{figure}

\begin{proof}[Proof of Lemma~\ref{lem:resgraph} for $k=2$]
  We let $\eps\le\beta^2\delta^2/10^6$ be small enough so that we can apply
  Lemma~\ref{lem:connect} with input $k=2$ and $\delta/4$. Let~$G$, $S$,
  $R^*$ and $r\in V(G)\setminus S$ with
  \begin{equation*}
    |S|\ge\frac{\delta n}{2}\,, \quad
    |R^*|\ge\frac{\delta^2 n}{400}\,, \quad
    |N_S(r)|\ge\frac{\beta\delta pn}8
  \end{equation*}
  be given.  In this proof we call a vertex~$x$ \emph{typical} with respect
  to a set~$Y$ if $\big|N_Y(x)\big|\ge(1-\eps)p|Y|$.

  Our goal is to find in~$S$ a copy of the $2$-reservoir graph depicted in
  Figure~\ref{fig:reservoir2}.  In a first step we now construct the
  five-vertex $2$-path $(a_1,a_2,r,b_2,b_1)$, sufficiently well-connected
  to~$S$ and~$R^*$.  For this we choose distinct vertices in the following
  order:
  \begin{align*}
     a_1 &\in N_S(r) &\text{typical with respect to }& S \,, R^* \,, N_S(r) \,,\\
     a_2 &\in N_S(r,a_1) &\text{typical with respect to }&S\,, R^*\,, N_S(r)\,, N_S(a_1)\,, 
     N_{R^*}(a_1) \,, \\
     b_2 &\in N_S(r,a_2) &\text{typical with respect to }&S\,, N_S(r)\,,\\
     b_1 &\in N_S(r,b_2) &\text{typical with respect to }&S\,, N_S(b_2)\,.
  \end{align*}
  This is possible by $(\eps,p,1,2)$-pseudorandomness of $G$, since in each case
  we choose from a set of vertices of size at least 
  \begin{equation*}
    \min \big\{|N_S(r,a_1)|,|N_S(r,a_2)|,|N_S(r,b_2)|\big\}
    \ge (1-\eps)\tfrac18\beta\delta p^2 n-3>\eps p^2 n\,,
  \end{equation*}
  by typicality in earlier choices. Moreover, we require typical behaviour
  only to at most five sets each of size at least $(1-\eps)\beta\delta^2
  pn/400>\eps p n$, where the left hand side of this calculation is given
  by the lower bounds on~$|N_S(r)|$ and~$|N_{R^*}(a_1)|$.
  Now it is easy to check that
  $(a_1,a_2,r,b_2,b_1)$ forms a $2$-path in $G$, that $(a_1,a_2)$,
  $(a_2,a_1)$, $(b_1,b_2)$ and $(b_2,b_1)$ are $(\tfrac12,p)$-connected to
  $S$, and $(a_2,a_1)$ is $(\tfrac12,p)$-connected to $R^*$.
  
  Our second step is to construct the remainder of the spine of the
  reservoir graph. Let us first investigate the structure of this graph
  more closely and describe our strategy. First we will construct
  candidates for the induced subgraph of the spine on
  $a_1,\dots,a_8$.  Note that each edge of this subgraph is contained in one
  of the three $2$-paths
  \begin{equation*}
    (a_1,a_2,a_3,a_4)\,, \quad (a_4,a_3,a_5,a_6)\,, \quad (a_6,a_5,a_7,a_8)\,.
  \end{equation*}
  Then we will construct candidates for the induced subgraph on $b_1,\ldots,b_8$.
  For this observe that the adjacencies among the vertices $a_1,\ldots,a_8$ and
  those among $b_1,\ldots,b_8$ are identical. Finally, in order to complete
  the spine, we need
  only in addition to guarantee that $(a_8,a_7,b_7,b_8)$ is a $2$-path. We will
  show that we can choose vertices among our candidates so that this is
  satisfied. When choosing the various candidates we have to keep in mind
  that we will want $(b_7,b_8)$ to be $(\frac12,p)$-connected to~$S$
  and~$R^*$ as required in the conclusion of our lemma. In addition we
  want to complete the constructed spine to obtain the whole reservoir
  graph in~$S$ with the help of the Connection lemma. Hence we will also
  need that
  \begin{equation}\label{eq:res:conn}
  \begin{alignedat}{3}
    &(a_3,a_4)\,,&\quad &(a_5,a_6)\,,&\quad &(a_7,a_8) \\
    &(b_3,b_4)\,,&&(b_5,b_6)\,,&&(b_7,b_8) 
  \end{alignedat}
  \end{equation}
  are $(\frac12,p)$-connected to~$S$.

  We now first define in~$S$ twelve disjoint parts
  $X_3,\dots,X_8,X'_3,\dots,X'_8$. We will then find candidates (with the
  required properties) for~$a_i$ in~$X_i$ and for~$b_i$ in~$X'_i$,
  $i\in\{3,\dots,8\}$.  So choose
  \begin{align*}
    X_3&\subset N_S(a_1,a_2)\setminus\{a_1,a_2,b_1,b_2\} &
    \quad\text{with}\quad |X_3|&=p^2|S|/20\,, \\
    X'_3&\subset N_S(b_1,b_2)\setminus\big(\{a_1,a_2,b_1,b_2\}\cup
    X_3\big) &
    \quad\text{with}\quad |X'_3|&=p^2|S|/20\,, \\
    X_4&\subset N_S(a_2)\setminus\big(\{a_1,a_2,b_1,b_2\}\cup X_3\cup
    X'_3\big) &
    \quad\text{with}\quad |X_4|&=p|S|/20\,, \\
    X'_4&\subset N_S(b_2)\setminus\big(\{a_1,a_2,b_1,b_2\}\cup X_3\cup
    X'_3\cup X_4\big) &
    \quad\text{with}\quad |X'_4|&=p|S|/20\,.
  \end{align*}
  Note that these sets exist because $(a_1,a_2)$
  and $(b_1,b_2)$ are $(\tfrac12,p)$-connected to $S$.  Further, choose
  \begin{equation*}
    X_5,\dots,X_8,X'_5,\dots,X'_8 \subset
    S\setminus\big(\{a_1,a_2,b_1,b_2\}\cup X_3\cup X_4\cup X'_3\cup
    X'_4\big)
  \end{equation*}
  each of size $|S|/20$ and pairwise disjoint.
  
  Let us now turn to the candidate sets for the vertices~$a_i$. The
  construction of these sets is somewhat technically intricate, as we will proceed
  differently for different vertices~$a_i$. It might help the reader to
  keep in mind though that in each instance the main purpose of the
  definition of a candidate set is to guarantee the necessary adjacencies
  and well-connectedness.
 
  We start with the candidate set for~$a_3$. For its choice observe that
  $a_3$ is adjacent to~$a_4$, $a_5$ and $a_6$ and recall that we want
  $(a_3,a_4)$ to be $(\frac12,p)$-connected to~$S$.  Thus let $A_3\subset
  X_3$ be those vertices with at least $p|X_4|/2=p^2|S|/40$ neighbours in
  $X_4$, at least $p|X_5|/2=p|X_6|/2=p|S|/40$ neighbours in each of $X_5$
  and $X_6$, and at least $p|S|/2$ neighbours in $S$.  By
  Lemma~\ref{lem:pseudorandom} we have $|A_3|\ge|X_3|-4\eps p^2 n>0$. We
  fix a vertex $a_3\in A_3$. By the definition of~$X_3$ this~$a_3$ is
  adjacent to~$a_1$ and~$a_2$ as required.
  
  We next define the candidate set for~$a_4$.  We will choose this
  candidate set in $N_{X_4}(a_3)$, so again by definition, all candidates
  for~$a_4$ will be adjacent to~$a_2$ and~$a_3$ as required. Moreover,
  $a_4$ should be adjacent jointly with~$a_3$ to~$a_5$, and $(a_3,a_4)$
  should be well-connected to~$S$. So let $A_4\subset N_{X_4}(a_3)$ be
  those vertices with at least $p^2|S|/80$ neighbours
  in~$N_{X_5}(a_3)$,
  with at least $p|S|/2$ neighbours in $S$, and at least $p^2|S|/4$
  neighbours in $N_S(a_3)$.  By the definition of~$A_3$ we have
  $|N_{X_5}(a_3)|\ge p|S|/40$ and
  $|N_S(a_3)|\ge p|S|/2$, and therefore Lemma~\ref{lem:pseudorandom} implies
  \begin{equation}\label{eq:res:Y4}
    |A_4|\ge |N_{X_4}(a_3)|-3\eps p^2 n
    \ge p^2|S|/40-3\eps p^2 n\ge p^2|S|/80\,,
  \end{equation}
  where the second inequality follows from the definition of~$A_3$.
  We do not choose~$a_4$ from this candidate set immediately, but first
  define further candidate sets.

  We will choose the candidate set for~$a_5$ in $N_{X_5}(a_3)$ and we will
  select only vertices which have some neighbour in~$A_4$. This will guarantee
  that~$a_5$ is connected to~$a_3$ as required and that, once we choose a
  vertex~$a_5$ we can also choose a valid vertex~$a_4$. In addition~$a_5$
  should be adjacent jointly with~$a_3$ to $a_6$, adjacent to~$a_7$
  and~$a_8$, and $(a_5,a_6)$ should be $(\frac12,p)$-connected to~$S$.
  So let $A_5\subset N_{X_5}(a_3)$ be those vertices with a neighbour in
  $A_4$, with at least $p^2|S|/80$ neighbours in $N_{X_6}(a_3)$, with at
  least $p|X_7|/2=p|X_8|/2=p|S|/40$ neighbours in both $X_7$ and $X_8$, and
  with at least $p|S|/2$ neighbours in~$S$. 
  Observe that by~\eqref{eq:res:Y4} the set~$A_4$ is of size $\Omega(p^2
  n)$ only, and so Lemma~\ref{lem:pseudorandom} only guarantees that at
  most $\eps p n$ vertices in $N_{X_5}(a_3)$ do not have a neighbour
  in~$A_4$. The remaining requirements for~$A_5$ however involve only sets
  of size $\Omega(pn)$ because $|N_{X_6}(a_3)|\ge p|S|/40$ by the
  definition of~$A_3$ and so Lemma~\ref{lem:pseudorandom} implies
  \begin{equation*}
    |A_5|\ge |N_{X_5}(a_3)|-\eps p n-4\eps p^2 n
    \ge p|S|/40-\eps p n-4\eps p^2 n\ge p|S|/80\,,
  \end{equation*}
  where the second inequality follows from the definition of~$A_3$.
  Again, we will choose~$a_5$ later, but next define for each possible
  choice of $a_5\in A_5$ candidate sets $A_6(a_5)$ and $A_7(a_5)$ for~$a_6$
  and~$a_7$. 

  For each $a_5\in A_5$ we will now define a candidate set $A_6(a_5)\subset
  N_{X_6}(a_3,a_5)$, which again guarantees the correct adjacencies
  of~$a_6$ to vertices with smaller indices. In addition, $a_6$ should be
  adjacent jointly with~$a_5$ to~$a_7$ and $(a_5,a_6)$ should be
  $(\frac12,p)$-connected to~$S$. So for each $a_5\in A_5$
  let $A_6(a_5)\subset N_{X_6}(a_3,a_5)$ be those vertices
  with at least $p^2|S|/80$ neighbours in $N_{X_7}(a_5)$, at least
  $p|S|/2$ neighbours in $S$, and at least $p^2|S|/4$ neighbours in
  $N_S(a_5)$. Again, by the definition of~$A_3$ and~$A_5$ and
  Lemma~\ref{lem:pseudorandom} we have
 \begin{equation*}
    |A_6(a_5)|
    \ge |N_{X_6}(a_3,a_5)|-3\eps p^2 n
    \ge p^2|S|/80-3\eps p^2 n\ge p^2|S|/160\,.
  \end{equation*}
  
  Similarly, we have to guarantee that candidates for~$a_7$ that are adjacent
  to~$a_5$ have a valid choice for a neighbour~$a_6$. Moreover, $a_7$
  should be adjacent jointly with~$a_5$ to~$a_8$, and $(a_7,a_8)$ should be
  $(\frac12,p)$-connected to~$S$. In addition we will guarantee that
  $(a_7,a_8)$ is also $(\frac12,p)$-connected to~$R^*$. We do not actually
  need this for $(a_7,a_8)$, but we will need it for $(b_7,b_8)$; and since
  we want to construct the candidate sets for the subgraph on
  $b_3,\dots,b_8$ analogously we will require it here.  So for each $a_5\in
  A_5$, let $A_7(a_5)\subset N_{X_7}(a_5)$ be those vertices with a
  neighbour in $A_6(a_5)$, with at least $p^2|S|/80$ neighbours in
  $N_{X_8}(a_5)$, with at least $p|S|/2$ neighbours in~$S$, and at least
  $p|R^*|/2$ neighbours in~$R^*$. 
  Because $|A_6(a_5)|\ge p^2|S|/160$ and $|N_{X_8}(a_5)|\ge p|S|/40$ by the
  definition of~$A_5$, Lemma~\ref{lem:pseudorandom} implies again
 \begin{equation*}
    |A_7(a_5)|
    \ge |N_{X_7}(a_5)|-\eps p n-3\eps p^2 n
    \ge p|S|/40-\eps p n-3\eps p^2 n\ge p|S|/80\,.
  \end{equation*}
  Set $A_7:=\bigcup_{a_5\in A_5}A_7(a_5)$. Then each
  vertex $a_5\in A_5$ has at least $|A_7(a_5)|\ge p|S|/80$ neighbours in $A_7$, and so
  by~\eqref{eq:pseudodisc} we have
  \[|A_5|p|S|/80\le e(A_5,A_7)\le (1+\eps)p|A_5||A_7|\,,\]
  and thus
  \begin{equation}\label{eq:res:Y7}
    |A_7|\ge |S|/160\,.
  \end{equation}
  So we have a candidate set for~$a_7$ which is of linear size. This will
  be crucial for the pigeonhole argument below.

  Finally we can turn to the definition of candidate sets for~$a_8$, which
  needs to be adjacent to~$a_5$ and~$a_7$ and such that $(a_7,a_8)$ is
  well-connected to~$S$ and~$R^*$. So for each
  $a_5\in A_5$ and $a_7\in A_7(a_5)$, let $A_8(a_5,a_7)\subset
  N_{X_8}(a_5,a_7)$ be those vertices with at least $p|S|/2$ neighbours in $S$, at least $p|R^*|/2$
  neighbours in $R^*$, at least $p^2|S|/4$ common neighbours with $a_7$ in $S$ and at least
  $p^2|R^*|/4$ common neighbours with $a_7$ in $R^*$. By
  Lemma~\ref{lem:pseudorandom} and the definition of~$A_5$ and $A_7(a_5)$ we have
  \begin{equation}\label{eq:res:Y8}
    |A_8(a_5,a_7)|
    \ge |N_{X_8}(a_5,a_7)|-4\eps p^2 n
    \ge p^2|S|/80-4\eps p^2 n\ge p^2|S|/160\,.
  \end{equation}
  This defines all the candidate sets for the vertices~$a_i$.

  By the symmetry of the $a_i$ and $b_i$ we can carry out the same construction 
  in the sets $X'_3,\ldots,X'_8$ to obtain $b_3$ and
  candidate sets $B_4$, $B_5$, $B_6(b_5)$, $B_7(b_5)$, $B_7$, $B_8(b_5,b_7)$
  for $b_4,\ldots,b_8$. 

  For completing the spine it now remains to use these candidate sets to
  find an edge $a_7b_7$ such that we can choose valid vertices
  $a_3,\dots,a_6,a_8$ and $b_3,\dots,b_6,b_8$ in the respective candidate
  sets and moreover $(a_8,a_7,b_7,b_8)$ forms a $2$-path. Thus we
  would in particular like to require that the vertex~$b_7$ in this edge
  has some neighbour in a candidate set for~$a_8$ defined
  by~$a_7$. However, the candidate sets for~$a_8$ depend in addition on
  vertices $a_5\in A_5$. This motivates the following definition.  We call
  an edge $a_7b_7$ with $a_7\in A_7$ and $b_7\in B_7$ \emph{good with
    respect to $a_1a_2$} if $b_7$ has a neighbour in $A_8(a_5,a_7)$ for
  some $a_5\in A_5$ with $a_7\in A_7(a_5)$.  Observe that by the definition
  of~$A_7$ for each $a_7\in A_7$, there indeed exists such an~$a_5$.
  Similarly, $b_7a_7$ is \emph{good with respect to $b_1b_2$} if $a_7$ has
  a neighbour in $B_8(b_5,b_7)$ for some $b_5\in B_5$ with $b_7\in
  B_7(b_5)$.

  Observe now that for each $a_5\in A_5$ and $a_7\in A_7(a_5)$
  by~\eqref{eq:res:Y8} and Lemma~\ref{lem:pseudorandom} at most $\eps pn$
  vertices in~$B_7$ have no neighbour in $A_8(a_5,a_7)$. Hence for all but
  at most $\eps p n$ vertices~$b_7$ in~$B_7$ the edge $a_7b_7$ is good with
  respect to $a_1a_2$. Similarly for all but at most $\eps pn$ vertices
  $a_7$ in $A_7$ the edge $b_7a_7$ is good with respect to $b_1b_2$.
  By~\eqref{eq:pseudodisc}
  there are at least $(1-\eps)p|A_7||B_7|$ edges between $A_7$ and $B_7$, of
  which all but at most $\eps p n\big(|A_7|+|B_7|\big)$ are good with
  respect to both $a_1a_2$ and $b_1b_2$. Since
  \[(1-\eps)p|A_7||B_7|\geByRef{eq:res:Y7}p|S|^2/51200>\eps p n^2\ge\eps p
  n\big(|A_7|+|B_7|\big)\] we can choose such a good edge $a_7b_7$.

  We will now complete this good edge and the already chosen $a_3,b_3$ to a
  copy of the spine. Since $a_7b_7$ is good with respect to $a_1a_2$ there
  exists a vertex $a_5\in A_5$ such that $a_7\in A_7(a_5)$ and $b_7$ has a
  neighbour~$a_8$ in $A_8(a_5,a_7)$. Fix such vertices~$a_5$
  and~$a_8$. Similarly, using goodness with respect to $b_1b_2$, we fix
  $b_5$ and $b_8$. This readily implies by the definition of the candidate
  sets that $(a_8,a_7,b_8,b_7))$ forms a
  $2$-path and that~$a_5$ is adjacent to~$a_7$ and~$a_8$ (and to~$a_3$),
  and similarly for~$b_5$.
  Next, by the definition of $A_7(a_5)$ we have that~$a_7$ has a
  neighbour~$a_6$ in $A_6(a_5)$, which we fix. Again, this implies
  that~$a_6$ is adjacent to~$a_5$ and~$a_7$. Moreover, by the definition
  of~$A_5$ we have that~$a_5$ has a neighbour~$a_4$ in~$A_4$, which we fix
  and which is thus adjacent to~$a_5$ and by the definition of~$A_4$
  to~$a_3$ and~$a_2$. Similarly, we fix~$b_6$ and~$b_4$.

  This completes the construction of the spine on
  $a_1,\dots,a_8,b_1,\dots,b_8$. In this construction we have guaranteed
  that $(a_1,a_2)$ and $(b_7,b_8)$ are $(\frac12,p)$-connected to~$R$
  and~$S$ as required and that the pairs from~\eqref{eq:res:conn} are
  $(\frac12,p)$-connected to~$S$.
  
  In our final step, which will complete the reservoir graph, we will apply
  Lemma~\ref{lem:connect} three times, to connect $(b_2,b_1)$ to $(a_4,a_3)$,
  $(a_5,a_6)$ to $(b_4,b_3)$, and $(b_5,b_6)$ to $(a_8,a_7)$.  More precisely let
  $S'=S\setminus\{a_1,\ldots,a_8,b_1,\ldots,b_8\}$. By
  Remark~\ref{rem:pseudo} we have $|S'|\ge \delta n/2-16\ge\delta n/4$. We
  apply Lemma~\ref{lem:connect} with $k=2$ and $\delta/4$ to find a
  ten-vertex squared path $P_1$ joining $b_2b_1$ to $a_4a_3$ in $S'$, then
  again to find $P_2$ joining $a_5a_6$ to $b_4b_3$ in $S'\setminus P_1$,
  and once more to find $P_3$ joining $b_5b_6$ to $a_8a_7$ in $S'\setminus
  (P_1\cup P_2)$. This is possible since all these pairs are
  $(\frac12,p)$-connected to~$S$, and, since $p^2|S|\ge 100$, even after
  removing the at most $47$ vertices of the partially constructed reservoir
  graph we have $(\frac14,p)$-connectedness.
 
  To conclude we have that
  \[a_1a_2rb_2b_1P_1a_4a_3a_5a_6P_2b_4b_3b_5b_6P_3a_8a_7b_7b_8\] is a
  squared path on $47$ vertices from $a_1a_2$ to $b_7b_8$ using $r$. On the
  other hand, by taking each $P_i$ in the opposite direction to the
  previous squared path we obtain that
  \[a_1a_2a_3a_4P_1b_1b_2b_3b_4P_2a_6a_5a_7a_8P_3b_6b_5b_7b_8\] is a
  squared path from $a_1a_2$ to $b_7b_8$ which uses every vertex of the
  previous squared path except $r$. Hence we have constructed the desired
  $2$-reservoir graph with reservoir vertex $r$.
\end{proof}

\section{Enumerating powers of Hamilton cycles}
\label{sec:count}
To prove Theorem~\ref{thm:countHC} we would ideally like to show that we
can construct the $k$th power of a Hamilton cycle vertex by vertex, and
that when we have $t$ vertices remaining uncovered, we have at least
$(1-\nu)p^k t$ choices for the next vertex; then the theorem would
follow immediately.  However, we obviously do not construct $k$th powers of
Hamilton cycles in this way: we have very little control over choice in
constructing the reservoir paths and connecting paths. Moreover for the
promised number $(1-\nu)^np^{kn}n!$ of Hamilton cycles powers even the
Extension lemma, Lemma~\ref{lem:onestep}, does not provide the desired
number of choices in the greedy portion of the construction where we
do choose one vertex at a time. (We remark though that the proof of this
lemma, together with the rest of our proof does immediately provide us with
$c^n p^{(1-\nu)kn}\big((1-\nu)n\big)!$ Hamilton cycle powers for some
absolute constant $c>0$.)

Thus we have to upgrade the Extension lemma in two ways. Firstly, we have to
modify it to give us more choices in each step (after a few initial
steps). Secondly, it turns out that to obtain the desired number of
Hamilton cycles powers we have to apply the Extension lemma for longer,
that is, the leftover set will in the end only contain $\bigO\big(n/(\log n)^2\big)$
vertices. Thus we have to change the Extension lemma to deal with this
different situation.  This comes at the cost of slightly tightening the
pseudorandomness requirement.

It is not hard to check that such an upgrade is possible. In the lemma
below we will guarantee that for an end $k$-tuple $\vecx$ of a $k$-path
there are $\big(1-\frac\nu{2k})\deg_L(\vecx)$ valid extensions, where~$L$ is
the current set of leftover vertices. As we will argue below, this will
provide us with the right number of Hamilton cycle powers if we can
guarantee in addition that
$\deg_L(\vecx)\ge\big((1-\frac\nu{2k})p\big)^k|L|$. Recall however that we
will want to use this lemma after constructing the reservoir path with the
help of Lemma~\ref{lem:reslem}, which guarantees $(\frac18,p)$-connectedness
to~$L$, a property which only gives a weaker lower bound on $\deg_L(\vecx)$
than desired. In order to overcome this shortcoming we will in the first
few applications of the Counting version of the Extension lemma transform
this $(\frac18,p)$-connectedness to a stronger property which gives the
desired bound.  Conditions~\ref{better:L1} and~\ref{better:L2}, and
conclusions~\ref{better:L1'} and~\ref{better:L2'} take care of this.

\begin{lemma}[Counting version of the Extension lemma]\label{lem:betterstep}
  Given $k\ge 2$ and $\nu>0$, if $C=2^{k+23}k^4/\nu$ then the following
  holds.  Let $0<p<1$ and $G$ be an $\big(1/(C\log
  n)^2,p,k-1,k\big)$-pseudorandom graph on $n$ vertices. Let $L$ and $R$ be
  disjoint vertex sets with $|L|,|R|\ge n/(200k\log n)^2$. Suppose that
  there is $0\le j\le k$ such that $\vecx=(x_1,\ldots,x_k)$ satisfies
  \begin{enumerate}[label=\rom]
  \item  $\vecx$ is $\big(\tfrac{1}{8},p)$-connected to $R$,
  \item\label{better:L1} $\deg_L(x_i,\ldots,x_k)\ge \frac18 \big(\frac{p}{2}\big)^{k-i+1}|L|$
    for each $1\le i\le j$,
    % $\deg_L(x_i,\ldots,x_k)\ge 2^{-k-4}p^{k-i+1}|L|$ for each $1\le i\le j$,
  \item\label{better:L2} $\deg_L(x_i,\ldots,x_k)\ge\big((1-\frac\nu{2k})p\big)^{k-i+1}|L|$ for each $j<i\le k$.
  \end{enumerate}
  Then at least $\big(1-\frac\nu{2k})\deg_L(x_1,\ldots,x_k)$
  vertices $x_{k+1}\in N_L(x_1,\ldots,x_k)$ satisfy that
  \begin{enumerate}[label=\abc]
  \item $(x_2,\ldots,x_{k+1})$ is $\big(\tfrac{1}{6},p)$-connected to $R$,
  \item\label{better:L1'} 
    $\deg_L(x_{i+1},\ldots,x_{k+1})\ge \frac18 \big(\frac{p}{2}\big)^{k-i+1}|L|$ for each $1\le i\le j-1$,
    % $\deg_L(x_{i+1},\ldots,x_{k+1})\ge 2^{-k-4}p^{k-i+1}|L|$ for each $1\le i\le j-1$,
 \item\label{better:L2'}
    $\deg_L(x_{i+1},\ldots,x_{k+1})\ge\big((1-\frac\nu{2k})p\big)^{k-i+1}|L|$ for each $j-1<i\le k$.
  \end{enumerate}
\end{lemma}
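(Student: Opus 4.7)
The plan is to parallel the proof of the original Extension lemma (Lemma~\ref{lem:onestep}): for each of the $2k$ conclusions that $(x_2,\ldots,x_{k+1})$ must satisfy---the $k$ conditions encoding $(\tfrac16,p)$-connectedness to $R$, together with the $k$ conditions on $\deg_L$---I will use Lemma~\ref{lem:pseudorandom} to bound the number of $x_{k+1}\in N_L(\vecx)$ that violate it, and then verify that the union of these bad sets has size at most $\tfrac{\nu}{2k}\deg_L(\vecx)$, whereupon the complement gives the claimed count.

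The $R$-conclusions are essentially identical to those in Lemma~\ref{lem:onestep}: for each $i\in[k]$, input (i) supplies $|N_R(x_{i+1},\ldots,x_k)|\ge\tfrac18(p/2)^{k-i}|R|$ (or just $|R|$ when $i=k$), which exceeds $\eps p^{k-1}n$ by the choice of $C$, and Lemma~\ref{lem:pseudorandom} applied with this set therefore excludes at most $\eps p^k n$ vertices $x_{k+1}$ that fail the required degree bound $\tfrac16(p/2)^{k-i+1}|R|$ into it. For the $L$-conclusions the argument splits according to whether the output condition is weak ($1\le i\le j-1$) or strong ($j\le i\le k$). In the weak case, input (ii) applied at index $i+1\le j$ yields $|N_L(x_{i+1},\ldots,x_k)|\ge\tfrac18(p/2)^{k-i}|L|$, and Lemma~\ref{lem:pseudorandom} again removes at most $\eps p^k n$ vertices, the inequality $(1-\eps)p\ge p/2$ comfortably converting the common-degree lower bound into the required $\tfrac18(p/2)^{k-i+1}|L|$.

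The strong case is the delicate one and dictates the size of $\eps$. Setting $A:=N_L(x_{i+1},\ldots,x_k)$ (with $A=L$ when $i=k$), input (iii) at index $i+1$---which lies strictly above $j$, because the output strong range begins one step earlier than the input one---provides $|A|\ge\bigl((1-\tfrac{\nu}{2k})p\bigr)^{k-i}|L|$. Lemma~\ref{lem:pseudorandom} then excludes at most $\eps p^k n$ vertices $x_{k+1}$ whose degree into $A$ falls below $(1-\eps)p|A|$, and the key estimate $1-\eps\ge 1-\tfrac{\nu}{2k}$, valid because $\eps=(C\log n)^{-2}\le\tfrac{\nu}{2k}$ for $C=2^{k+23}k^4/\nu$, yields $\deg_L(x_{i+1},\ldots,x_{k+1})\ge\bigl((1-\tfrac{\nu}{2k})p\bigr)^{k-i+1}|L|$ as required. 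The transition index $i=j$ is absorbed into this case: input (ii) at index $j$ is only weak, but it is $A=N_L(x_{j+1},\ldots,x_k)$ that enters here, and the index $j+1$ already enjoys the strong bound from (iii).

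It remains to sum up: the total number of bad vertices is at most $2k\eps p^k n$, while the input bounds give $\deg_L(\vecx)\ge 2^{-(k+3)}p^k|L|\ge 2^{-(k+3)}p^k n/(200k\log n)^2$. The required inequality $2k\eps p^k n\le \tfrac{\nu}{2k}\deg_L(\vecx)$ reduces, after cancelling $p^k n$ and $\log^2 n$, to $C^2\ge 2^{k+5}\cdot 200^2\cdot k^4/\nu$, which is satisfied with huge room to spare by the stated $C=2^{k+23}k^4/\nu$. The main obstacle to watch out for is keeping the $(1-\tfrac{\nu}{2k})$-budget honest at every step of the strong case; the rest is routine bookkeeping.
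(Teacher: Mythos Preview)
Your proposal is correct and follows essentially the same approach as the paper's own (sketch) proof: define the good $x_{k+1}$ as those typical with respect to each of the $2k$ sets $N_L(x_i,\ldots,x_k)$, $N_R(x_i,\ldots,x_k)$ for $2\le i\le k$, together with $L$ and $R$, and use Lemma~\ref{lem:pseudorandom} to bound the atypical vertices by $2k\eps p^kn$, which the choice of $C$ makes at most $\tfrac{\nu}{2k}\deg_L(\vecx)$. Your treatment is in fact slightly more explicit than the paper's, since you separate the weak and strong $L$-cases and verify the key shift at $i=j$, whereas the paper simply works with the uniform $(1-\tfrac{\nu}{2k})$-typicality threshold and leaves these checks to the reader.
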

\begin{proof}[Sketch of proof]
  In this proof we say that a vertex $x$ is \emph{typical} with
  respect to $S$ if $\deg_S(x)\ge \big(1-\frac\nu{2k}\big)p|S|$.
  Let $X_{k+1}\subset N_L(x_1,\ldots,x_k)$ be the set of all vertices
  typical with respect to $N_L(x_i,\ldots,x_k)$ for each $2\le i\le k$, to
  $R\cap N(x_i,\ldots,x_k)$ for each $2\le i\le k$, and to $L$ and $R$. By
  Lemma~\ref{lem:pseudorandom} the number of vertices which fail any one of
  these conditions is at most $4k n p^k/(C\log n)^2<\nu p^k |L|/(2k\cdot
  2^{-k-4})$.
\end{proof}

\begin{proof}[Sketch of proof of Theorem~\ref{thm:countHC}]
 We follow essentially the proof of Theorem~\ref{thm:main}. We make the
 following alterations. We require the same pseudorandomness of $G$ as in that
 theorem, \emph{except} that we set $C=2^{k+23}k^4/\nu$ and choose $\eps\le
 1/(C\log n)^2$ such that any $n/(\log n)^2$-vertex induced subgraph of $G$ still meets the pseudorandomness
 requirements of Theorem~\ref{thm:main}. By equation~\eqref{eq:pseudodisc}, this
 means we can choose $\eps=\Theta\big(1/(\log n)^2\big)$.
 
 We construct a reservoir set $R$ of size between $n/(\log n)^2$ and $10n/(\log n)^2$, with the
 properties that all vertices of $R$ have at least $\beta p n/2$ neighbours
 in $V(G)\setminus R$ and all vertices of $V(G)\setminus R$ have at least
 $\frac12\beta p |R|$ neighbours in $R$. It is not hard to check that the
 same construction procedure works.
 
 The application of Lemma~\ref{lem:reslem} to obtain a reservoir path
 $P$ of size $50k|R|$ covering $R$ needs to change only in that we have to guarantee
 $(\tfrac12,p)$-connection of the ends of $P$ to $R$.
 The choice of $\eps$ obviously allows this.
 
 We now use Lemma~\ref{lem:betterstep} instead of
 Lemma~\ref{lem:onestep} to extend $P$ greedily. In the first $k$ steps
 we let the input $j$ decrease from $k$ to $1$, while at the $(k+1)$st
 application and thereafter, we take $j=0$. We continue until we can no longer use
 Lemma~\ref{lem:betterstep}, i.e.\ at the step when we have
 constructed $P'$ and the number of vertices not in $R\cup P'$ is less
 than $n/(200k\log n)^2$. We set $L$ to be these leftover vertices.
 
 The remainder of the proof is identical to that of Theorem~\ref{thm:main}: that
 is, we apply Lemma~\ref{lem:covlem} to construct a path $P''$ covering $L$ and
 using only vertices from $L\cup R$, and connect $P''$ and $P'$ using
 Lemma~\ref{lem:connect}. The choice of $\eps$ ensures that we can do this since
 $|L\cup R|>n/(\log n)^2$. We thus successfully construct the $k$th power of a
 Hamilton cycle in $G$.
 
 Finally, by considering the choices only during the use of
 Lemma~\ref{lem:betterstep} with $j=0$, we can estimate the number of
 $k$th powers of cycles which we construct are at least
 \begin{multline*}
   \prod_{t=n/(200k\log n)^2}^{n-1000kn/(\log n)^2}
    \big(1-\tfrac{\nu}{2k}\big)^kp^k t
    \ge \prod_{t=n/(200k\log n)^2}^{n-1000kn/(\log n)^2}
    \big(1-\tfrac{\nu}{2}\big)p^k t \\
    \ge \big(1-\tfrac{\nu}{2}\big)^np^{kn}n!n^{-1001kn/(\log n)^2}\,.
  \end{multline*}
 and since $n^{-1001kn/(\log n)^2}=\big(2^{-1001k/\log
 n}\big)^n>\big(1-\tfrac{\nu}{2}\big)^n$ for sufficiently large $n$, the
 result follows.
\end{proof}

\section{Concluding remarks}
\label{sec:conc}

\paragraph{\bf Hamilton cycles}
For Hamilton cycles, a simple modification of our arguments for squared Hamilton cycles yields that
$\big(\eps,p,0,1\big)$-pseudorandom graphs with minimum degree $\beta p
n$ are Hamiltonian for sufficiently small $\eps=\eps(\beta)$. This bound is
essentially best possible (for our notion of pseudorandomness) since the
disjoint union of $G(n-pn,p)$ and $K_{pn}$ is easily seen to be 
asymptotically almost surely
$(\eps,p,0,1-\eps)$-pseudorandom and have minimum degree at least $pn/2$. 

\medskip

\paragraph{\bf Improving the pseudorandomness requirements}
It would be interesting to obtain stronger results on the pseudorandomness
required to find $k$th powers of Hamilton cycles. We believe that a
generalisation of our result for the $k=2$ case is true.

\begin{conjecture}
  For all $k\ge 2$ the pseudorandomness requirement in
  Theorem~\ref{thm:main} can be replaced by
  $(\eps,p,k-1,k)$-pseudorandomness.
\end{conjecture}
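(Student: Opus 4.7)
The plan is to trace through the proof of Theorem~\ref{thm:main} and identify the two places where the stronger pseudorandomness hypotheses are used; note that for $k=2$ the conjecture already matches Theorem~\ref{thm:main}(a), so only $k\ge 3$ needs attention. The Extension lemma (Lemma~\ref{lem:onestep}) already uses only $(\eps,p,k-1,k)$-pseudorandomness, and the Reservoir and Covering lemmas inherit their pseudorandomness requirements from the Reservoir graph lemma (Lemma~\ref{lem:resgraph}) and the Connection lemma (Lemma~\ref{lem:connect}). It therefore suffices to prove $(\eps,p,k-1,k)$-pseudorandom versions of those two lemmas.

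For the Reservoir graph lemma with $k\ge 3$ the plan is to abandon the simple $2k$-vertex gadget, whose verification requires $(\eps,p,k,k+1)$-pseudorandomness for typicality checks against sets of size $\Omega(p^k n)$, and replace it by a larger constant-sized construction in the spirit of Figure~\ref{fig:reservoir2}. Concretely we would first choose two $k$-cliques $\vecu$ and $\vecv$ each containing $r$ as their last vertex, ensuring that $(u_{k-1},\ldots,u_1)$ and $(v_{k-1},\ldots,v_1)$ are $(\tfrac12,p)$-connected to both $S$ and $R^*$; the typicality requirements for this involve only joint neighbourhoods of at most $k-1$ vertices intersected with sets of size $\Omega(pn)$ or larger, which is controllable under $(\eps,p,k-1,k)$-pseudorandomness. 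We then build two constant-length $k$-paths joining the $(k-1)$-tuples $(u_{k-1},\ldots,u_1)$ and $(v_{k-1},\ldots,v_1)$ using the improved Connection lemma, one through $r$ and one avoiding $r$; the reservoir switch corresponds exactly to choosing which of these two paths to traverse.

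For the Connection lemma with $k>2$ the strategy is to bypass $K_k$-counting entirely and generalise the layered construction of the $k=2$ case. On the $\vecx$-side we iteratively construct sets $Y_k,Y_{k+1},\ldots$ of good end-vertices of $k$-paths from $\vecx$; at each step we use Lemma~\ref{lem:pseudorandom} at joint neighbourhoods of at most $k-1$ vertices (which under $(\eps,p,k-1,k)$-pseudorandomness lets us discard all but $\eps p^k n$ atypical vertices from a set of size $\Omega(p^{k-1}n)$) to transfer goodness from one layer to the next, and symmetrically from $\vecy$. The aim is to obtain two linear-sized sets $Y,Y'$ such that for most edges $yy'$ between them, $y$ and $y'$ sit inside a crossing $k$-clique $\vecc$ whose remaining $k-2$ vertices lie in distinguished ``good'' $(k-1)$-tuples previously established. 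A pigeonhole argument on the $\Omega(pn^2)$ edges between $Y$ and $Y'$, using only $(\eps,p,0,1)$-edge discrepancy, then finishes the join; the trick is to design the layered construction so that the last $k-2$ vertices of the connecting clique can always be ``filled in'' deterministically from the layer data, without any global $K_k$-counting.

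The hardest step is the joining argument, since this is precisely where Proposition~\ref{prop:cliquecount} is invoked in the current proof and where $(\eps,p,k-1,2k-1)$-pseudorandomness is genuinely used: the atypical-vertex set in the inductive proof of the proposition has size $\eps p^{2k-2}n$, while under only $(\eps,p,k-1,k)$-pseudorandomness it could be as large as $\eps p^k n$, too large for the inductive upper bound on $|K_r|$ to close. Our plan circumvents Proposition~\ref{prop:cliquecount} by counting only pairs (good $(k-1)$-tuple, good edge) in place of full $K_k$'s, but verifying that goodness propagates through $\Omega(k)$ layers on each side without the accumulated error swamping the available $\Omega(pn^2)$ crossing edges is where the bulk of the work lies, and is the main reason the conjecture looks non-trivial.
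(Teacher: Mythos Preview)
This statement is a \emph{conjecture}, explicitly listed as an open problem in Section~\ref{sec:conc}; the paper does not prove it. Your proposal is therefore not competing with any proof in the paper but with the authors' own judgement that the problem is open.

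Your diagnosis of where the stronger hypotheses enter is accurate, and for $k=2$ there is indeed nothing to do. But what you have written is a research plan, not a proof, and you concede as much in the final paragraph. Two concrete gaps:

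\textbf{Connection lemma.} The $k=2$ argument succeeds because the object to be matched at the join is an \emph{edge}: once $Y_7$ and $Y'_7$ are both linear, $(\eps,p,0,1)$-pseudorandomness already gives $\Omega(pn^2)$ crossing edges, goodness is a property of an edge, and pigeonhole finishes. For $k\ge 3$ the object to be matched is a crossing $K_k$, and goodness is a property of an ordered $k$-tuple. Your plan to ``fill in the remaining $k-2$ vertices deterministically from the layer data'' is precisely the step the authors say they cannot carry out: see the discussion immediately preceding the proof of Lemma~\ref{lem:connect} for $k>2$, where they note that $(\eps,p,k-1,k)$-pseudorandomness suffices to reach $\Omega(n)$-sized layers but that they ``are not able to show'' property~($\star$) under that hypothesis. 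Counting pairs (good $(k-1)$-tuple, good edge) instead of $K_k$'s does not obviously help, because the $(k-1)$-tuples good from the $\vecx$-side live in one family of layers and those good from the $\vecy$-side in another; you still need a common $K_k$ extending one tuple from each family, and nothing in your sketch forces such an overlap.

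\textbf{Reservoir graph.} Your replacement gadget is underspecified. A reservoir graph requires two $k$-paths with \emph{identical} start and end $k$-tuples, one using $r$ and one not. Your sketch produces two $(k-1)$-tuples through $r$ and then invokes the ``improved Connection lemma'' twice; two independent connections have no reason to share end $k$-tuples, and in any case this step presupposes the very $(\eps,p,k-1,k)$ Connection lemma you have not established. So this part is not independent progress.

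In short: the bottleneck is Lemma~\ref{lem:connect} for $k\ge 3$, and the proposal offers no new idea for it beyond ``generalise the $k=2$ layering'', which the authors explicitly report having tried without success.
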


As remarked in the introduction even in the $k=2$ case we do not know
whether Theorem~\ref{thm:main} is sharp.  It would also be very interesting
(albeit very hard) to find better lower bound examples than those
mentioned in the introduction.

In the evolution of random graphs triangles, spanning triangle factors and
squares of Hamilton cycles appear at different times: In $G(n,p)$ the
threshold for triangles is $p=n^{-1}$, but only at $p=\Theta(n^{-2/3}(\log
n)^{1/3})$ each vertex of $G(n,p)$ is contained in a triangle with high
probability, which is also the threshold for the appearance of a spanning
triangle factor~\cite{JKV08}. Squares of Hamilton cycles on the other hand
are with high probability not present in $G(n,p)$ for $p\le n^{-1/2}$, and
K\"uhn and Osthus~\cite{KOPosa} recently showed that for $p\ge n^{-1/2+\eps}$
they are present. Our Theorem~\ref{thm:main} is also applicable to random graphs, 
but the range is worse: $p\gg (\ln n/n)^{1/3} $ for squares of Hamilton cycles 
and $p\gg (\ln n/n)^{1/(2k)}$ for general $k$th powers of Hamilton
cycles (recall that Riordan's result~\cite{Riordan} implies the
optimal bound $p\gg n^{-1/k}$ for $k\ge 3$). 

Pseudorandom graphs behave differently. For $(n,d,\lambda)$-graphs it is
known that there are triangle-free $(n,d,\lambda)$-graphs with 
$\lambda=c d^2/n$ for some $c$, but for `small'~$c$  %, such graphs
                                %have as soon as these graphs are
                                %forced to have one triangle,  
every vertex in an $(n,d,\lambda)$-graph is contained in a triangle
(and, more generally, there exists a fractional triangle factor). This
motivated Krivelevich, Sudakov and Szab\'o~\cite{KSS04} to conjecture
that indeed these graphs already contain a spanning triangle
factor. We do not know whether triangle factors and squares of
Hamilton cycles require differently strong pseudorandomness
conditions.

\begin{question}
  Do spanning triangle factors and spanning $2$-cycles appear for the same
  pseudorandomness requirements (up to constant factors)?
\end{question}

\medskip

\paragraph{\bf Universality}
For random graphs the study of when $G(n,p)$ contains all spanning or
almost spanning graphs with maximum degree bounded by a constant~$\Delta$,
was initiated in~\cite{ACKRR_universality}. In this case $G(n,p)$ is also
called universal for these graphs.  The authors
of~\cite{ACKRR_universality} showed that $G(n,p)$ contains all graphs on
$(1-\eps)n$ vertices with maximum degree at most~$\Delta$ if $p\ge
Cn^{-1/\Delta}\log^{1/\Delta}n$. In~\cite{DKRR_improved} this result was
extended to such subgraphs on~$n$ vertices. Recently,
Conlon~\cite{Con_universality} announced that for the first of these two
results he can lower the probability to $p=n^{-\eps-1/\Delta}$ for some
(small) $\eps>0$.  The best known lower bound results from the fact
that $p=\Omega(n^{-2/(\Delta+1)})$ is necessary for $G(n,p)$ to contain a
$K_{\Delta+1}$-factor.

For pseudorandom graphs we were only recently able to establish
universality results of this type, which follow from our work on a Blow-up lemma for
pseudorandom graphs (see below). We can prove that
$(p,cp^{\frac32\Delta+\frac12}n)$-jumbled graphs on~$n$ vertices with minimum
degree $\beta p n$ are universal for spanning graphs with maximum
degree~$\Delta$~\cite{ABHKP_sparse_blowup}. We believe that these
conditions are not optimal.

\begin{question}
  Which pseudorandomness conditions (plus minimum degree conditions) imply
  universality for spanning graphs of maximum degree~$\Delta$?
\end{question}

It is worth noting that Alon and Capalbo~\cite{AlCap} explicitly
constructed almost optimally sparse universal graphs for spanning graphs
with maximum degree~$\Delta$. These graphs have some pseudorandomness
properties, but they also contain cliques of order $\log^2 n$, which random
graphs of the same density certainly do not.

% J: I took this off, because it seemed funny to me to refer to the blow-up
% results yet again.
%
% \medskip
% \paragraph{\bf Local resilience}
% It is natural to ask the following local resilience version of the
% P\'osa-Seymour conjecture.
% Suppose that for some $k$, $a$, $b$ and $\eps,\beta>0$, the $n$-vertex graph $\Gamma$ is
% $(\eps,p,a,b)$-pseudorandom and has minimum degree $\beta p n$. Let $G$ be a
% subgraph of $\Gamma$ with the property that for each vertex $v$ of $\Gamma$ we
% have $\deg_G(v)\ge\big(\tfrac{k}{k+1}+\beta\big)\deg_\Gamma(v)$. How large a
% $k$th power of a cycle can we guarantee exists in $G$?
%
% Note that for $k\ge 2$ and $p=o(1)$ we cannot ask for a $k$th power of a
% Hamilton cycle: if $\Gamma$ is a random graph, we can form $G$ from
% $\Gamma$ by simply deleting all edges in the neighbourhood of some vertex $v$ of
% $\Gamma$, and the resulting graph will satisfy the minimum
% degree requirement but obviously has no squared Hamilton cycle. So the aim here
% would be to find an almost-spanning $k$th power of a Hamilton cycle.

\medskip

\paragraph{\bf Additive structures in multiplicative subgroups} Alon and
Bourgain~\cite{AlonBourgain} recently made use of properties of
pseudorandom graphs in order to prove the following conjecture of
Sun~\cite{Sun}. Given any prime $p\ge 13$, there is a cyclic ordering of
the quadratic residues modulo $p$ such that the sum of any two
consecutive quadratic residues in the cyclic order is also a quadratic
residue. In fact, Alon and Bourgain proved much more. It is not
necessary to take the
subgroup of $\FF_p$ formed by the quadratic residues. Any sufficiently
large multiplicative subgroup of $\FF_p$ has the same property.

\begin{theorem}[Alon and Bourgain~\cite{AlonBourgain}, Theorem 1.2]
  There exists an absolute positive constant $c$ such that for any
  prime power $q$ and for any multiplicative subgroup~$A$ of the
  finite field~$\FF_q$ of size
  \[|A|=d\ge c\,\frac{q^{3/4}(\log q)^{1/2}(\log\log\log
    q)^{1/2}}{\log\log q}\] there is a cyclic ordering
  $a_0,a_1,\ldots,a_{d-1}$ of the elements of $A$ such that
  $a_i+a_{i+1}\in A$ for all $i$.
\end{theorem}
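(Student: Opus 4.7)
The plan is to reinterpret the statement as a Hamilton cycle problem in a Cayley-type graph, show that this graph is pseudorandom via standard exponential sum estimates, and then invoke a suitable Hamiltonicity result for pseudorandom graphs.

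Let~$G$ be the graph on vertex set~$A$ in which two distinct vertices $a,b$ are joined whenever $a+b\in A$. A cyclic ordering of the form demanded by the theorem is precisely a Hamilton cycle in~$G$. (Since $0\notin A$, the condition $a_i+a_{i+1}\in A$ automatically forces $a_i\neq -a_{i+1}$.) Thus the theorem reduces to proving that~$G$ is Hamiltonian whenever $|A|=d$ exceeds the stated threshold.

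The first substantive step is to establish pseudorandomness of~$G$. Expanding $\tpl 1_A$ in its additive Fourier series over $\FF_q$ and using the classical Gauss-sum bound $\bigl|\sum_{a\in A} e_q(\xi a)\bigr|\le \sqrt q$ for every non-trivial additive character~$e_q$ (which follows from Weil's theorem after expanding $\tpl 1_A$ in multiplicative characters and applying Deligne/Weil to each Gauss sum), together with Plancherel applied to subsets $X,Y\subset A$, I would derive the mixing estimate
\[
  \Bigl|e(X,Y)-\tfrac{d}{q}|X||Y|\Bigr|\le \sqrt{q\,|X||Y|}.
\]
This shows that $G$ is $(p,\beta)$-jumbled with $p=d/q$ and $\beta=O(\sqrt q)$. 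The same estimate applied with $X=\{a\}$ gives $\delta(G)=(1\pm o(1))d^2/q$ whenever $d\gg q^{3/4}$, hence $\delta(G)\ge \tfrac12 pn$ for $n=|A|=d$, which is the minimum-degree hypothesis needed in any pseudorandom Hamilton cycle theorem.

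With $G$ pseudorandom and of large minimum degree, Hamiltonicity should follow from an appropriate Hamilton cycle result. The short route is to invoke the remark at the start of Section~\ref{sec:conc}: any $(\eps,p,0,1)$-pseudorandom graph with $\delta\ge \beta pn$ is Hamiltonian, and the jumbledness-to-pseudorandomness translation $(p,\eps^2 p^{3/2}n)$-jumbled $\Rightarrow$ $(\eps,p,0,1)$-pseudorandom turns the bound above into Hamiltonicity whenever $d\gg q^{4/5}$. To reach the sharper exponent $3/4$ appearing in the theorem (and explain the logarithmic factors in the threshold), I would instead feed the Weil-type mixing estimate into a P\'osa rotation--extension argument: the jumbledness above already suffices to produce the expansion property ($|N(B)|\ge 2|B|$ on the relevant scales) that drives P\'osa, and the usual $\log n$ overhead inherent in rotation arguments accounts exactly for the $(\log q)^{1/2}/\log\log q$-type factors in the stated threshold.

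The main obstacle is not the reduction, which is essentially formal once one has the character-sum input, but obtaining the optimal exponent $3/4$. The generic pseudorandom Hamilton cycle theorems, including ours, are too crude to close the gap between $q^{4/5}$ and $q^{3/4}$, so the structure of the Cayley graph must be exploited more carefully through a hands-on rotation argument. The sharpness of the exponent $3/4$ is in turn dictated by the scale at which the Weil bound on $|\sum_{a\in A} e_q(\xi a)|$ begins to concentrate $|N(a)|$ around its mean $d^2/q$, and any improvement below $q^{3/4}$ would require genuinely new input beyond Weil.
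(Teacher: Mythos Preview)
Your overall plan matches the paper's one-sentence description of the Alon--Bourgain argument: reduce to Hamiltonicity of the sum graph on~$A$, establish pseudorandomness via the Gauss-sum bound $\bigl|\sum_{a\in A}e_q(\xi a)\bigr|\le\sqrt q$, and invoke a Hamiltonicity theorem for pseudorandom graphs. Two points need correcting.

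First, your minimum-degree step is wrong as stated. The mixing estimate with $X=\{a\}$ gives only $|\deg(a)-d^2/q|\le\sqrt{qd}$, and $\sqrt{qd}$ is \emph{not} $o(d^2/q)$ at $d\sim q^{3/4}$ (indeed not for any $d<q$). What actually works is that $G$ is vertex-transitive: for $a,b\in A$ the map $x\mapsto ba^{-1}x$ is a graph automorphism, so $G$ is $D$-regular. Computing $\sum_{a,b\in A}\mathbf 1_A(a+b)$ via characters (pulling out one factor $|\hat{\mathbf 1}_A(\xi)|\le\sqrt q$ and using Parseval on the rest) gives $D=d^2/q+O(\sqrt q)$, which \emph{is} $(1+o(1))d^2/q$ once $d\gg q^{3/4}$. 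This regularity is also what makes the $(n,d,\lambda)$ framework directly applicable.

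Second, the endgame. The paper (following Alon--Bourgain) does not redo a P\'osa argument; it quotes the Krivelevich--Sudakov theorem that an $(n,D,\lambda)$-graph is Hamiltonian whenever $\lambda\le D(\log\log n)^2/(1000\log n\,\log\log\log n)$. With $n=d$, $D\sim d^2/q$ and $\lambda=O(\sqrt q)$ (the nontrivial eigenvalues of this Cayley-type graph are governed by the nontrivial character sums), this hypothesis rearranges to exactly the stated threshold, iterated logarithms and all. Your proposal to run rotation--extension by hand is the same idea underneath, but ``the usual $\log n$ overhead'' is too vague to pin down the specific $(\log q)^{1/2}(\log\log\log q)^{1/2}/\log\log q$ factor; citing Krivelevich--Sudakov gives it for free.
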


The proof of this theorem amounts to showing that a certain graph on
vertex set~$A$ is pseudorandom and applying the Hamiltonicity result of
Krivelevich and Sudakov~\cite{KriSudHam} to find a Hamilton cycle in
this graph, which defines the cyclic ordering. We can replace that
result with Corollary~\ref{cor:ndlambda} to obtain the following result,
which in particular strengthens Proposition~1.6 of~\cite{AlonBourgain}.

\begin{corollary}\label{cor:groups}
  For each $k\ge 2$ there exists a positive constant $c$ such that for
  any prime power $q$ and multiplicative subgroup $A$ of $\FF_q$ of
  size
  \begin{equation*}
    |A|=d\ge\begin{cases} c q^{6/7} & k=2\\ c q^{(3k+1)/(3k+2)} & k\ge
      3\end{cases}
  \end{equation*}
  there is a cyclic ordering $a_0,a_1,\ldots,a_{d-1}$ of the
  elements of $A$ such that $a_i+a_{i+j}$ is in $A$ for all $i$ and $1\le
  j\le k$.
\end{corollary}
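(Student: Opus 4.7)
The plan is to build an auxiliary graph $G=G_A$ on vertex set~$A$ in which $a\sim b$ exactly when $a+b\in A$ (with loops deleted). A cyclic ordering $a_0,\ldots,a_{d-1}$ of~$A$ is then precisely a $k$th power of a Hamilton cycle in~$G$ if and only if $a_i+a_{i+j}\in A$ for all $i$ and all $1\le j\le k$. It therefore suffices to apply Corollary~\ref{cor:ndlambda} to~$G$.

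First I would check that $G$ is (essentially) regular. Since $A$ is a multiplicative subgroup of~$\FF_q$, the map $b\mapsto b/a$ restricts to a bijection $A\to A$ sending $\{b\in A:a+b\in A\}$ to $\{c\in A:1+c\in A\}$, via the identity $a+b=a(1+b/a)$ together with the fact that~$A$ is closed under multiplication. Hence every vertex has the same number of neighbours, up to possibly a single loop at vertices with $2a\in A$; this minor deficit can be absorbed by removing an almost-perfect matching, so we may treat~$G$ as $D$-regular with $D$ equal to the common neighbourhood count.

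Second, I would estimate $D$ and the second eigenvalue $\lambda(G)$. A standard application of Weil's bound on multiplicative character sums, of the same form as in the proof of Proposition~1.6 of Alon and Bourgain~\cite{AlonBourgain}, gives
\[ D=\frac{d^2}{q}+O(\sqrt{q}) \quad\text{and}\quad \lambda(G)=O(\sqrt{q}). \]
In particular~$G$ is an $(n,D,\lambda)$-graph with $n=d$, $D=(1+o(1))d^2/q$ and $\lambda\le C\sqrt{q}$ for an absolute constant~$C$. This step is the main obstacle, since the rest of the argument is formal, but the required character-sum estimate is known and essentially follows the Alon--Bourgain template.

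Finally, I would feed these parameters into Corollary~\ref{cor:ndlambda}. For $k=2$ its hypothesis $\lambda\le\eps D^{5/2}n^{-3/2}$ reads $\sqrt{q}\le\eps(d^2/q)^{5/2}d^{-3/2}=\eps d^{7/2}q^{-5/2}$, which rearranges to $d\ge cq^{6/7}$. For $k\ge 3$ the hypothesis $\lambda\le\eps D^{3k/2}n^{1-3k/2}$ reads $\sqrt{q}\le\eps d^{(3k+2)/2}q^{-3k/2}$, which rearranges to $d\ge cq^{(3k+1)/(3k+2)}$. Both are precisely the hypothesis of Corollary~\ref{cor:groups} for suitably small~$c$, so the corollary supplies a $k$th power of a Hamilton cycle in~$G$, and reading off its vertex cycle order yields the desired cyclic ordering of~$A$.
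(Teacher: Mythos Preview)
Your approach is correct and matches the paper's own (one-sentence) indication: the paper simply notes that one follows Alon--Bourgain's construction of the sum graph on~$A$ and replaces their appeal to the Krivelevich--Sudakov Hamiltonicity theorem by Corollary~\ref{cor:ndlambda}, which is exactly what you carry out (with the arithmetic checked correctly). One small simplification: since $A$ is a multiplicative subgroup, $2a\in A$ holds for some $a\in A$ if and only if $2\in A$, so either every vertex has a loop or none does, and the loop-deleted graph is already exactly regular---no matching removal is needed.
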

%
%%%%%%%%%%%%%%%%%%%%%%%%%%%%%%%%%%%%%%%%%%%%%%%%%%%%%%%%%%%%%%%
%P+J: We think it's not really necessary to give the proof,
% as it really is just copying their proof and doing the
% replacement we said. But if anyone disagrees, here it is.
%%%%%%%%%%%%%%%%%%%%%%%%%%%%%%%%%%%%%%%%%%%%%%%%%%%%%%%%%%%%%%%
%\begin{proof}
% Given $k$, a prime power $q$ and multiplicative subgroup
% $A$ of $F_q$ of size $d$ as in the statement, the Cayley
% sum-graph $G(F_q,A)$ is the graph on vertex set $F_q$
% with an edge $yz$ if and only if $y+z\in A$. Then Lemma
% 2.7 of~\cite{AlonBourgain} states that $G=G(F_q,A)$ is
% a $(q,d,q^{1/2})$-graph. Following the proof of their
% Theorem 1.2, the induced subgraph $G[A]$ is regular of
% degree $r$ and by eigenvalue interlacing it is a
% $(d,r,q^{1/2})$-graph. By assumption $q^{3/2}=o(d)$ and
% therefore $r=\big(1+o(1)\big)\tfrac{d^2}{q}$. Now we
% choose $c=c(k)$ such that the condition of
% Corollary~\ref{cor:ndlambda} is satisfied, and that
% result tells us that $G[A]$ contains the $k$th power of
% a Hamilton cycle. We give $A$ the corresponding cyclic
% order, and the result follows by definition of the Cayley
% sum-graph.
%\end{proof}

\medskip

\paragraph{\bf Blow-up lemmas}

For dense graphs the Blow-up lemma~\cite{KSS_bl} is a powerful tool for
embedding spanning graphs with bounded maximum degree (versions of this
lemma for certain graphs with a maximum degree not bounded by a constant have
recently been developed in~\cite{BTWBlowup}).
Already Krivelevich, Sudakov and Szab\'o~\cite{KSS04} remark that
their result on triangle factors in sparse pseudorandom graphs 
can be viewed as a first step towards the development of a Blow-up lemma for
(subgraphs of) sparse pseudorandom graphs.

We see the results presented here as a further step in this direction.  And
in fact in recent work~\cite{ABHKP_sparse_blowup} we establish a
blow-up lemma for spanning graphs with bounded maximum degree in sparse
pseudorandom graphs.  However, the pseudorandomness requirements for this
more general result are more restrictive than those used here.

%%%%%%%%%%%%%%%%%%%%%%%%%%%%%%%%%%%%%%%%%%%%%%%%%%%%%%%%%%%

\bibliographystyle{amsplain_yk}
\bibliography{PseudoHCk}

\end{document}